\documentclass[final,3p,authoryear,12pt]{elsarticle}
\usepackage[round]{natbib}
\usepackage[colorinlistoftodos,textsize=footnotesize]{todonotes}%\usepackage[textsize=tiny]{todonotes}

\usepackage{amsfonts,amsmath,amssymb,lscape,float,graphicx,tabularx,url,times,color,natbib,afterpage,rotating,hyperref,xcolor,tikz,moresize,multirow,enumerate,colortbl,mathtools,bm,bbm}
\usepackage[utf8]{inputenc}
\usepackage{mathtools}
\usepackage[caption=true]{subfig}
\usepackage[ruled, vlined]{algorithm2e}
\usepackage{enumitem}
\usepackage{amsmath}
\usepackage{amssymb}
\usepackage{amsthm}
\mathtoolsset{showonlyrefs}
\allowdisplaybreaks
\usepackage{diagbox}

\newcounter{algsubstate}

\usepackage{setspace} % cmr cmss cmtt
\onehalfspacing %   \doublespacing % \singlespacing

\usepackage{geometry,float}
\geometry{hmargin=1.72cm}

% % letter format
% \special{papersize=215mm,280mm} \setlength{\paperwidth}{216mm} \setlength{\paperheight}{279mm}

% \setlength{\hoffset}{0mm} 
% \setlength{\voffset}{-15mm} 
% \setlength{\topmargin}{-15mm}
% \setlength{\oddsidemargin}{-10.4mm} 
% \setlength{\evensidemargin}{-10.4mm}
% \setlength{\textwidth}{186mm} 
% \setlength{\textheight}{250mm}
% \setlength{\columnsep}{11mm}

% \renewcommand{\baselinestretch}{1.5}

\definecolor{myblue}{rgb}{0.8,0.8,1}
\definecolor{myred}{rgb}{1,0.8,0.8}
\definecolor{mygreen}{rgb}{0.8,1,0.8}
\definecolor{mygrey}{rgb}{220,220,220}

\usepackage[bottom]{footmisc}
\usepackage[scaled]{helvet}

\DeclareMathAlphabet{\xcal}{OMS}{cmsy}{m}{n}

\definecolor{dbblue}{RGB}{10,65,155}				
\definecolor{dbred}{RGB}{215,0,50}					
\definecolor{blue}{RGB}{0,113.9850,188.9550} 			
\definecolor{red}{RGB}{216.7500,82.8750,24.9900} 		
\definecolor{green}{RGB}{118.8300,171.8700,47.9400} 	
\definecolor{grey}{RGB}{110,110,110}				
\definecolor{lgrey}{RGB}{210,210,210}				
\definecolor{c1}{RGB}{0,113.9850,188.9550}			
\definecolor{c2}{RGB}{216.7500,82.8750,24.9900}		
\definecolor{c3}{RGB}{236.8950,176.9700,31.8750}		
\definecolor{c4}{RGB}{125.9700,46.9200,141.7800}		
\definecolor{c5}{RGB}{118.8300,171.8700,47.9400}		
\definecolor{c6}{RGB}{76.7550,189.9750,237.9150}		
\definecolor{c7}{RGB}{161.9250,19.8900,46.9200}		
\definecolor{c14}{RGB}{0,102,102}					

\hypersetup{colorlinks,urlcolor=dbblue,citebordercolor=dbblue,linkbordercolor=dbblue,filecolor=dbblue,filebordercolor=dbblue,citecolor=dbblue,linkcolor=dbblue,colorlinks=true}

\newtheorem{definition}{Definition}
\newtheorem{lemma}{Lemma}
\newtheorem{remark}{Remark}
\newtheorem{proposition}{Proposition}

\newtheorem{example}{Example}

\defcitealias{UK7year:2013}{Griffiths et al., 2013}

\setcitestyle{citesep={,}}

%uncomment for showing 'preprint ...'
\makeatletter
\def\ps@pprintTitle{%
  \let\@oddhead\@empty
  \let\@evenhead\@empty
  \def\@oddfoot{\reset@font\hfil\thepage\hfil}
  \let\@evenfoot\@oddfoot
}
\makeatother

\usepackage{xfrac,bigints}
\usepackage{bbold}

% --- shared-body glue (MOOR vs arXiv/SSRN wrappers) ---
\newcommand{\BeginAppendix}{\appendix}
\newcommand{\EndAppendix}{}

\begin{document}

\begin{frontmatter}

\title{Optimal execution in a time-varying environment: well-posedness and price manipulation}

\author[sns]{Gianluca Palmari}
\ead{gianluca.palmari@sns.it}

\author[bologna,sns]{Fabrizio Lillo}
\ead{fabrizio.lillo@sns.it}

\author[imperial]{Zoltan Eisler}
\ead{eisler.zoltan@gmail.com}

\address[sns]{Classe di Scienze, Scuola Normale Superiore, Piazza dei Cavalieri 7, 56126 Pisa, Italy}
\address[bologna]{Department of Mathematics, University of Bologna, Piazza di Porta San Donato 5, Bologna 40126, Italy}
\address[imperial]{Department of Mathematics, Imperial College London, 180 Queen's Gate, South Kensington, London SW7 2RH, UK}

\begin{abstract}
We investigate the well posedness and the existence of price manipulation in an optimal execution problem within the Almgren-Chriss framework, where the temporary and permanent impact parameters vary deterministically over time. We present sufficient conditions for the existence of a unique solution and provide second-order conditions for the problem, with a particular focus on scenarios where impact parameters change monotonically over time. Additionally, we establish conditions to prevent transaction-triggered price manipulation in the optimal solution, i.e. the occurrence of buying and selling in the same trading program. Our findings are supported by numerical analyses that explore various regimes in simple parametric settings for the dynamics of impact parameters.
\end{abstract}

\begin{keyword}
Market impact, Optimal execution, Price manipulation.
\end{keyword}

\end{frontmatter}

\bibliographystyle{abbrvnat}

% Shared body for the MOOR (informs4) and arXiv/SSRN wrappers.
% Edit content here so both versions stay identical.

\section{Introduction}\label{sec:Introduction}

Market frictions and trading costs significantly affect the investment performance of institutional investors. These include direct fees for market access and indirect costs arising from market microstructure effects, such as bid-ask spreads and price impact costs \cite{bouchaud2009markets, cont2014price}. Bid-ask spreads result from information asymmetry and inventory management by market makers \cite{glosten1985bid}. Impact costs are also often due to information asymmetry \cite{kyle1985continuous}. They mechanically arise due to the limited availability of liquidity, and their statistical properties are extensively studied \cite{lillo2003master, bouchaud2009markets, bucci2020co, zarinelli2015beyond}. Impact is often decomposed into two components. First, temporary impact reflects the short-term behavior of liquidity supply, and the immediate cost associated with a trade. Second, permanent impact reflects the long-term influence of a trade on the price, whose effect is accumulated over time.

Minimizing trading costs is an important topic in both the mathematical finance community and the financial industry \cite{bouchaud2009markets, taranto2018linear, obizhaeva2013optimal, almgren2001optimal, fruth2014optimal, fruth2019optimal}. As indicated by  \cite{kyle1985continuous}, for market participants it is optimal to split their trade into smaller orders. Formally such Optimal Execution Problems involve the interplay of two opposing constraints: ensuring the total quantity is traded within the allotted time interval, and navigating impact to minimize adverse price moves.

Numerous optimal execution models have been proposed and are actively utilized in the financial industry. The seminal paper of \cite{almgren2001optimal} introduced a price impact model, where permanent and temporary impact are linear functions of the traded volume and do not vary in time. Other approaches such as the propagator model \cite{bouchaud2003fluctuations}, account for the transient nature of impact, relaxing over time. \cite{obizhaeva2013optimal} delve into dynamic properties of supply and demand, and  \cite{fruth2014optimal} proposed time-varying models of impact based on Obizhaeva and Wang's initial model. 

A market impact model should not allow \emph{price manipulation}, sometimes called dynamic arbitrage. In simple terms, this means that by trading in and out of a position one should not generate a profit by exploiting price impact. \cite{alfonsi2012order} extended this notion by coining the term \emph{transaction-triggered price manipulation}, which occurs when, for example, the cost of buying (or selling) a given amount can be reduced by inserting intermediate trades of the opposite sign. By analyzing a model with linear transient and permanent impact components, they established necessary conditions for negative transaction costs.
In this context, one objective of our study is to derive conditions for well-posedness from both a mathematical and a financial perspective: mathematically, in the Hadamard sense (existence and uniqueness of a stable solution, see \cite{hadamard1902problemes}), and financially, in terms of the possible presence of transaction-triggered price manipulation (and, more generally, negative transaction costs). We call an Optimal Execution Problem \emph{strongly well-posed} if it is mathematically well-posed and its optimal liquidation strategy is monotonic (i.e., it does not contain intermediate buy trades). If it is mathematically well-posed but the optimal strategy involves both buy and sell trades, we call it \emph{weakly well-posed}.

\begin{figure}
        \centering
        \includegraphics[scale=0.76]{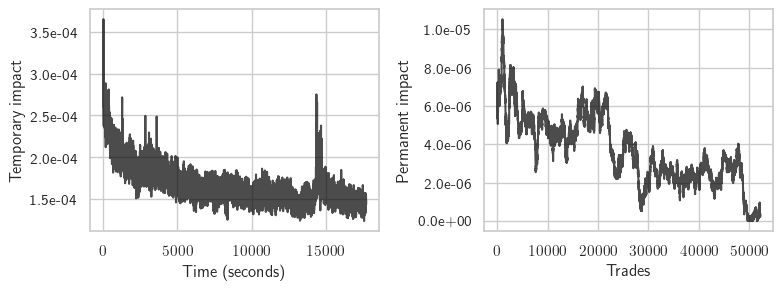}
        \caption{\textbf{Left:} Average intraday temporary impact dynamics for shares of Microsoft over June 2021. \textbf{Right:} Intraday permanent impact dynamics for shares of Microsoft on June 10, 2021.}
        \label{fig:impact_dynamics}
\end{figure}

It is very often assumed that impact parameters are constant over time. While convenient for tractability, this assumption overlooks the highly dynamic nature of liquidity: its availability fluctuates throughout the trading day, responds to the arrival of expected or unexpected news, and can even vanish endogenously during extreme events, as exemplified by the Flash Crash, see \cite{kirilenko2017flash}.

To motivate the need for incorporating dynamic impact parameters, Figure~\ref{fig:impact_dynamics} presents empirical estimations of the temporary (left panel) and permanent (right panel) impact coefficients for Microsoft shares during June 2021. The temporary impact is estimated following the methodology of \cite{cartea2016incorporating}, by traversing the limit order book to compute the price per share at different traded volumes relative to the prevailing mid-price, and subsequently performing a linear regression. The slope of this regression provides an estimate of the temporary impact at each time point. The permanent impact is estimated following the approach of \cite{campigli2022measuring}, which employs a score-driven (see \cite{blasques2015information}) modification of the  \cite{hasbrouck1991measuring} VAR model. From both panels, it is evident that impact parameters exhibit significant intraday variation. However, despite the stochastic fluctuations, the dynamics are largely centered around a deterministic trajectory with a pronounced J-shaped pattern. 

In the mathematical finance literature, several works have introduced stochastic
impact models for optimal execution (see, e.g.,
\cite{cartea2015optimal,gueant2016financial,barger2019optimal}), although they
typically focus on mean-reverting dynamics with constant mean. Empirical
evidence suggests that, while random fluctuations are present, the dominant
component of intraday impact behavior remains predominantly deterministic, with
stochastic deviations around it.
 
In this paper, we study optimal execution in settings where liquidity and market impact are deterministically time-dependent. Interestingly, to the best of our knowledge, this aspect has been neglected by the literature. Our primary focus is on the well-posedness of the optimal execution problem, with particular emphasis on establishing conditions for the existence and uniqueness of solutions. Additionally, we investigate the absence of potential price manipulation strategies. As we will detail below, if the speed of variation in liquidity is sufficiently high, manipulation opportunities may arise within the optimal strategies, or the problem may become ill-posed. %
Our main message is that time-variation can fundamentally change the qualitative behavior of optimal execution: if permanent impact decays too quickly relative to the prevailing level of temporary impact, then the optimal strategy may exhibit oscillations and, in extreme cases, the problem may become ill-posed or admit manipulation-type effects. More precisely, in continuous time we identify a quantitative admissibility condition requiring that the maximal decay rate of permanent impact be controlled by the minimal temporary impact over the trading horizon (up to the appropriate time-scale factor). In discrete time, this condition arises from a structural property of the impact matrix: we show that the problem is uniquely solvable and strongly well-posed—hence in particular free of transaction-triggered price manipulation—whenever the impact matrix is a $B$-matrix in the sense of \cite{pena2001class}. This yields an easily verifiable criterion directly in terms of the sampled impact coefficients, and it provides a clear restriction on admissible time dynamics for calibrated impact curves.

%Specifically, we study the continuous time scenario to identify weak conditions ensuring the existence and uniqueness of solutions. Additionally, we deepen the understanding of the permissible impact settings derived from the time-varying Almgren-Chriss model. Here, the variability in impacts challenges the existence of solutions to the optimal execution problem, potentially leading to scenarios where the problem does not conform to a typical minimization framework. In contrast, with constant impacts, the continuous time optimal execution problem, not only remains a minimization problem, but also guarantees a unique solution.

Our paper starts from a classical Almgren-Chriss setting in discrete time, assuming that the temporary and permanent impact coefficients are time-dependent. Section \ref{sec:problem} lays the groundwork for our analysis by presenting the problem setting, establishing the notation, and defining the concept of well-posedness. Building upon this foundation, the section then leverages discrete time settings and B-matrices theory to derive results in the continuous time limit on admissible regime dynamics, specifically for conditions ensuring strong well-posedness. Section \ref{secton3:CT} presents our analytical results for the continuous time setting. Using classical calculus of variations the problem is recast into a second order differential equation with Dirichlet boundary conditions. Leveraging the theory of such boundary problems and the Sturm-Picone Lemma, we provide sufficient conditions for the existence of a unique extremal point and second-order conditions guaranteeing that this point is a maximum. We also provide conditions for the positivity and monotonicity of the solution and therefore for the absence of transaction-triggered price manipulation. Section \ref{sec:numerical} provides a detailed numerical analysis of the properties of the optimal solution when temporary and permanent impact are exponentially time-varying\footnote{The Appendix also contains the case of power-law decaying impact coefficients.}. Numerical analysis confirms our theoretical results and shows that our sufficient conditions are rather binding. Finally in Section \ref{sec:conclusions}
we draw some conclusions. The paper contains several Appendices, where the proofs of our propositions are presented.

\section{The market impact model and the Optimal Execution Problem}\label{sec:problem}

In this Section, we introduce discrete time and continuous time market impact models and the associated Optimal Execution Problems. 
Throughout this paper, we extensively discuss the well-posedness of the Optimal Execution Problem, which is defined specifically as a Hadamard problem (see \cite{hadamard1902problemes}). In other words, a well-posed Optimal Execution Problem is characterized by the existence of a unique solution, which can be computed either analytically or numerically, and its numerical computation is stable. In the following we define when a problem is weakly or strongly well-posed, give sufficient conditions for each, and study their financial properties. 
%One can ask the question whether, given a set of parameters, Problem \eqref{problem:OTContinuousTime} is well-posed in the Hadamard sense \cite{hadamard1902problemes}. 
%We introduce two definitions of well-posedness.

\begin{definition}[Strongly well-posed]
\label{definition:strongly_well_posed}
An Optimal Execution Problem is \emph{strongly well-posed} if it is well-posed
in the Hadamard sense and its optimal liquidation strategy is monotone (i.e.,
it does not contain intermediate trades of the opposite sign). In discrete time
this means that an optimal sell program satisfies $\xi_i\le 0$ for all
$i=1,\dots,N$, and in continuous time it means $\dot\zeta_t\le 0$ for all
$t\in[0,T]$.
Note that Proposition~\ref{proposition:NoPTPimpliesNOPM} implies that such a
problem admits no negative expected transaction costs.
\end{definition}

\begin{definition}[Weakly well-posed]
We say that an Optimal Execution Problem is \emph{weakly well-posed} if it is
well-posed in the Hadamard sense, but its optimal liquidation strategy is not
monotone (i.e., it involves both buy and sell trades).
\end{definition}

Proposition \ref{proposition:NoPTPimpliesNOPM} plays a pivotal role in practical applications by enabling the proof of non-arbitrage results. It allows us to establish the strong well-posedness of a model by demonstrating that the price impact model does not admit transaction-triggered price manipulation. 

\subsection{The discrete time framework}\label{secton2:DT}

%We begin with the discrete time impact model, which will help building an intuition for the continuous time market impact model. In our stock price model a trader has the capacity to impact asset prices. In the absence of the trader's activity, asset prices follow a random walk representing the aggregated actions of all other market participants.

An agent fixes a finite time horizon of trading $T>0$, and splits the interval
$[0,T]$ into $N$ equal sub-intervals. We set
\[
t_i=\frac{iT}{N},\qquad i=0,\dots,N,
\]
so that the trading periods are $[t_{i-1},t_i)$ for $i=1,\dots,N$. Throughout
the paper, we adopt the convention that \(Q>0\) denotes the size
of the initial position. We mainly focus on the liquidation of a long
position, i.e., a \emph{sell program} that reduces an initial inventory
of \(Q\) shares to zero. Monotone liquidation corresponds to non-positive
trades; however, unless otherwise stated we allow intermediate buy trades in
order to analyze transaction-triggered price manipulation.
Buy programs can be treated symmetrically by reversing the sign of all
trades and of the initial inventory.

\begin{definition}[Trading strategy in discrete time]
\label{definition:strategy}
Let \(Q>0\) denote the size of an initial long inventory.
A trading strategy in discrete time is a sequence
\(\xi_1,\dots,\xi_N \in \mathbb{R}\) of trades on the sub-intervals
\([t_{i-1},t_i)\). The associated inventory process is
\[
\zeta_0 = Q,\qquad \zeta_i = \zeta_{i-1} + \xi_i,\quad i=1,\dots,N.
\]
A \emph{sell program} (full liquidation) satisfies \(\zeta_N=0\), i.e.
\[
\sum_{i=1}^{N} \xi_i = -\,Q,
\]
with \(\xi_i \in \mathbb{R}\) for all \(i\)\footnote{A \emph{buy program} is obtained
symmetrically by reversing the sign of all trades and of the inventory.}.
\end{definition}

%\begin{definition}
%\label{definition:strategy0}
%We consider an admissible strategy $\xi$ for $Q\in \mathbb{R}$. A negative transaction costs strategy, is an admissible strategy such that $\mathcal{C}(\xi) < 0$. In the sequel we will consider such a control to be a price manipulation strategy.
%\end{definition}

\begin{definition}
\label{definition:previsible_strategy}
A strategy is previsible if $\xi_i$ is $\mathcal{F}_{i-1}$-measurable, where
$\mathcal{F}_{i-1}$ is the information set available to the trader up to the
end of the $(i-1)$st interval (i.e., before time $t_{i-1}$). We define
$\mathbb{F}=\{\mathcal{F}_i\}_{i=0}^{N}$ as the filtration induced by the noise
process $\{\varepsilon_i\}_{i=1}^{N}$, where
$\varepsilon_i\overset{\text{ind}}{\sim}\mathcal{N}(0,\sigma^{2}_i)$.
Note that we do not impose any sign property on the trading process.
\end{definition}

\begin{definition}
\label{definition:admissible_strategy}
For a fixed inventory size \(Q>0\) we define the space of admissible
(liquidation) strategies by
\[
\Xi_N(Q)
= \Big\{\xi\in \mathbb{R}^N:\ \sum_{i=1}^{N}\xi_{i} = -\,Q,\
\xi_i\in\mathcal{F}_{i-1},\ \exists M > 0\ \text{s.t. } |\xi_{i}| < M,\
\forall i=1,\dots, N\Big\}.
\]
Admissible strategies are \(\mathbb{F}\)-adapted and bounded.
\end{definition}

\begin{definition}
\label{definition:static_admissible_strategy}
For a fixed inventory size \(Q>0\) we define the space of static admissible
strategies by
\[
\Xi_{N,0}(Q)
= \Big\{\xi\in \mathbb{R}^N:\ \sum_{i=1}^{N}\xi_{i} = -\,Q,\
\xi_i\in\mathcal{F}_{0},\ \exists M > 0\ \text{s.t. } |\xi_{i}| < M,\
\forall i=1,\dots, N\Big\}.
\]
Static strategies are chosen at time \(0\) and are bounded.
\end{definition}

\begin{definition}[Time-varying Almgren-Chriss model in discrete time]
The  \cite{almgren2001optimal} model encodes the agent's impact on the market price $S$ as  
\begin{eqnarray}
\label{eq:S}
    S_{k}=S_{k-1}+\theta_{k} \xi_{k} + \varepsilon_{k},
\end{eqnarray}
while the price experienced by the trader is defined by 
\begin{eqnarray}
    \label{eq:S_tilde}
    \tilde{S}_{k} = S_{k-1} + \eta_{k} \xi_{k}.
\end{eqnarray}
\end{definition}

The Almgren-Chriss model includes two types of market impact. First, the process $\{\theta_k\}_{k\ge0}$ is $\mathcal{F}_{0}$-measurable and positive, and represents the \emph{permanent} impact. When the agent trades  $\xi_k$ shares at time $t_{k}$, she permanently impacts the market price by the value $\theta_k\xi_k$. Second, the model also considers an additional time-varying impact, called \emph{temporary} impact and denoted by the $\mathcal{F}_{0}$-measurable and positive process $\{\eta_k\}_{k\ge0}$. This impact completely dissipates after the trade, but affects the transaction costs.

%\begin{definition}
%\label{definition:model}
%We define a price impact model as $\tilde S_k(\xi_1, \dots, \xi_k)$ which maps trades up to the $k$`th interval to the tradeable price $\tilde S_k$ of the $k$'th trade defined by Eq. \eqref{eq:S_tilde}. A price impact model is completely characterized by the price equation Eq. \eqref{eq:S} and the effective price equation \eqref{eq:S_tilde}. Equivalently a price impact model is completely characterized by the $\mathcal{F}_0$-measurable impact processes $\theta$ and $\eta$, and the noise process $\varepsilon$.
%\end{definition}

%\begin{definition}

The implementation shortfall for a discrete time strategy is defined as
the difference between the initial value $Q S_0$ of the portfolio at time
zero and the cash generated by the execution:
\begin{eqnarray}
\label{eq:C}
    \mathrm{IS}(\xi) = Q S_0 + \sum_{i=1}^{N}\xi_{{i}}\tilde{S}_{{i}}.
\end{eqnarray}
%\end{definition}

%\begin{definition}
The expected cost of trading is defined as
\begin{eqnarray}
\label{eq:DT}
    \mathcal{C}(\xi) = \mathbb{E}_{0}\left[\mathrm{IS}(\xi)\right].
\end{eqnarray}
Here $\mathbb{E}_{0}[\cdot]=\mathbb{E}[\cdot\,|\,\mathcal{F}_{0}]$ denotes
conditional expectation given the information available at time $0$.

and in turn, the Optimal Execution Problem is formulated as minimizing the cost, via
\begin{eqnarray}
\label{problem:DT}
   \xi^{\star} = \arg \min_{\xi\in\Xi_N(Q)}\mathcal{C}(\xi).
\end{eqnarray}
%\end{definition}

The objective of this paper is to analyze the solutions of Problem \eqref{problem:DT}, its continuous time counterpart (Problem \eqref{problem:OTContinuousTime} below), and to establish conditions for the existence and uniqueness. In order to do so, we will require two more fundamental definitions.

\begin{definition}
\label{definition:roundtrip}
Let us consider an admissible strategy $\xi$ for $Q > 0$. A discrete time price manipulation strategy is an admissible strategy such that $\mathcal{C}(\xi) < 0$. This is different from the definition of \cite{huberman2004price}, who considered the case of $Q=0$, where both buying and selling are permitted.
\end{definition}

\begin{definition}
\label{definition:TTPM}
A discrete-time model admits transaction-triggered price manipulation in the sense of
\cite{alfonsi2010optimal} if there exists an inventory size $Q>0$ and a
corresponding admissible sell strategy $\tilde{\xi}\in\Xi_N(Q)$ such that
\[
\mathcal{C}(\tilde{\xi}) <
\inf\Big\{ \mathcal{C}(\xi)\ \big|\ \xi\in\Xi_N(Q),\ \xi_i \le 0\ \forall i\Big\}.
\]
In words, the cost of a sell program can be reduced by inserting intermediate
buy trades.
\end{definition}

\medskip 
We define the impact encoding matrix \(A\) as follows 
\begin{equation}
\label{eq:encoding_matrix}
    A = \begin{bmatrix}
                2\eta_{1} & \theta_{1} & \dots & \dots & \theta_{1}\\
                \theta_{1} & 2\eta_{2} & \theta_{2} & \dots & \theta_{2} \\
                \dots & \theta_{2} & 2\eta_{3} & \dots & \dots \\
                \dots & \dots & \dots & \dots & \theta_{N-1} \\
                \theta_{1} & \theta_{2} & \dots & \theta_{N-1} & 2\eta_{N} 
                \end{bmatrix},
\end{equation}
where \(\{\theta_k\}_{k \ge 0}\) and \(\{\eta_{k}\}_{k \ge 0}\) are respectively the permanent and temporary (\(\mathcal{F}_0\)-measurable) impacts. 

The proofs of this and all subsequent propositions are presented in \ref{app:proofs}.

\begin{proposition}
\label{proposition:CostFunctionDT}
Assume the encoding matrix \(A\) defined by Eq. \ref{eq:encoding_matrix}
is positive semi-definite. Then the Optimal Execution Problem may be written as 
\begin{align}
    \xi^{\star} = \underset{\xi \in \Xi_{N,0}(Q)}{\arg \min}
    \left\{ \frac{1}{2}\xi^{T}A\xi\right\}\,,
\end{align}
with $\Xi_{N,0}(Q)$ defined in \ref{definition:static_admissible_strategy}.
\end{proposition}
%\begin{remark}
Notice that in the calculation of the optimal strategy the matrix $A$ encodes
the impacts. Notably, we observe that the trader incurs a cost proportional to
each of the temporary impacts once, and a cost proportional to the $i$th
permanent impact $N-i$ times. Moreover, Problem \eqref{problem:DT} is ``dynamic''
in the sense that we optimize with respect to the admissible set of strategies
$\Xi_{N}(Q)$. Those strategies are such that $\xi_i\in\mathcal{F}_{i-1}$.
Proposition \ref{proposition:CostFunctionDT} shows that the \emph{optimal}
solution $\xi^{\star}$ is ``static'', $\xi^{\star} \in \mathcal{F}_0$, i.e.\ it
is sufficient to minimize over the much smaller set $\Xi_{N,0}(Q)$.
%\end{remark}

Since the optimization problem is quadratic, it is immediate to see that the following Proposition holds.

\begin{proposition}
\label{proposition:DTProblemOptimalsolution}
If $A$ is symmetric positive definite (SPD), then the Optimal Execution
Problem has a unique solution $\xi^{\star}$, and 
\begin{eqnarray}
    \xi^{\star}= -\,\frac{Q}{\pmb{1}^{T}A^{-1}\pmb{1}}\, A^{-1}\pmb{1}.
\end{eqnarray}
where $\pmb{1}\in\mathbb{R}^{N}$ denotes the vector with all components equal
to $1$.
\end{proposition}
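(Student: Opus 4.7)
The plan is a standard constrained quadratic optimization argument via Lagrange multipliers, exploiting the SPD hypothesis to guarantee both the invertibility of $A$ and the strict convexity of the objective.

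First I would observe that, since $A$ is SPD, the map $\xi \mapsto \tfrac{1}{2}\xi^{T}A\xi$ is strictly convex on $\mathbb{R}^{N}$, hence also on the affine hyperplane $\Xi_{0} = \{\xi : \mathbf{1}^{T}\xi = Q\}$. Strict convexity on a non-empty closed convex set immediately implies that any minimizer is unique, so the bulk of the argument is to identify the minimizer explicitly.

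To identify it, I would form the Lagrangian $L(\xi,\lambda) = \tfrac{1}{2}\xi^{T}A\xi - \lambda(\mathbf{1}^{T}\xi - Q)$ and use the fact that the constraint is affine with full-rank gradient $\mathbf{1}$, so the first-order KKT conditions are necessary and sufficient. Setting $\nabla_{\xi}L = A\xi - \lambda\mathbf{1} = 0$ and using that $A$ is invertible (as it is SPD) yields $\xi = \lambda A^{-1}\mathbf{1}$. Substituting into the budget constraint $\mathbf{1}^{T}\xi = Q$ gives $\lambda\,\mathbf{1}^{T}A^{-1}\mathbf{1} = Q$. Since $A^{-1}$ is also SPD, the scalar $\mathbf{1}^{T}A^{-1}\mathbf{1}$ is strictly positive and in particular non-zero, so $\lambda = Q/(\mathbf{1}^{T}A^{-1}\mathbf{1})$ is well defined, and back-substitution yields the claimed formula
\begin{equation*}
\xi^{\star} = \frac{Q}{\mathbf{1}^{T}A^{-1}\mathbf{1}}\, A^{-1}\mathbf{1}.
\end{equation*}

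There is essentially no obstacle here: existence follows from coercivity of a strictly convex quadratic on an affine subspace, uniqueness from strict convexity, and the closed form from one linear solve. The only point requiring a word of care is that $\mathbf{1}^{T}A^{-1}\mathbf{1} \neq 0$, which is immediate from positive definiteness of $A^{-1}$ applied to the non-zero vector $\mathbf{1}$. One could alternatively phrase the proof by parametrizing $\Xi_{0}$ as $\xi = \xi_{0} + V z$ with $V$ a basis of $\ker(\mathbf{1}^{T})$, reducing to an unconstrained SPD quadratic in $z$; this yields the same answer after a short calculation but is heavier than the Lagrange route.
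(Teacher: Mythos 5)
Your proposal is correct and follows essentially the same route as the paper's proof: uniqueness from strict convexity of the quadratic on the affine hyperplane, and the explicit formula from the Lagrange optimality conditions. The only (welcome) addition is your explicit justification that $\pmb{1}^{T}A^{-1}\pmb{1}>0$ via positive definiteness of $A^{-1}$, which the paper leaves implicit.
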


Before diving into the formal properties of our optimal execution model, we present two examples that elucidate the mathematical and financial motivations for studying optimal execution in a time-varying setting.

\begin{example}[Constant impact]
\label{example:constant_impact}
Assume time-independent constant impact parameters $\theta$ and $\eta$.
%By Proposition \ref{proposition:CostFunctionDT}, the Optimal Execution Problem becomes 
%\begin{align}
%\label{problem:example_constant_impact}
%    \xi \in \underset{\xi \in Q}{\arg \min} \left\{ \frac{1}{2}\xi^{T}A\xi\right\}, && A = \begin{bmatrix}
 %               2\eta & \theta & \dots & \dots & \theta\\
 %               \theta & 2\eta & \theta & \dots & \theta \\
 %               \dots & \theta & 2\eta & \dots & \dots \\
 %               \dots & \dots & \dots & \dots & \theta \\
 %               \theta & \theta & \dots & \theta & 2\eta 
 %               %\end{bmatrix},
%\end{align}
Note that the spectrum of \(A\) can be computed and is equal to \(\mathrm{Sp}(A) = \{2\eta-\theta,2\eta+(N-1)\theta\}\). The first eigenvalue has algebraic multiplicity $N-1$, % with the associated eigenvectors $\left\{(1,-\frac{1}{N-1},\dots,\frac{1}{N-1}),\dots,(-\frac{1}{N-1},\dots,\frac{1}{N-1},1)\right\}$. 
while the second eigenvalue has algebraic multiplicity one. Thus, if $\eta > \frac{\theta}{2}$ the optimal solution is the TWAP strategy $\xi^{\star}_{{i}}=-\frac{Q}{N}$ \cite{almgren2001optimal}. This is because $A$ is SPD, which makes the problem convex. On the contrary if $\eta < \frac{\theta}{2}$, then the worst strategy is the TWAP, because $-A$ is SPD, and thus the problem is concave.
\end{example}

\begin{example}[Time-varying impact]
\label{example:_TV_impact}
%The previous example illustrated a necessary and sufficient condition on the impact parameters for a well-posed optimal execution problem W
One might ask whether the above condition $\eta_{{i}}>\frac{\theta_{{i}}}{2}$ is an insufficient, sufficient, or necessary condition to ensure a well-posed minimization problem. It turns out, none of these is true in general.

Consider the matrix
\begin{eqnarray}
    A = \begin{bmatrix}
                2\eta_{1} & \theta_{1} & \theta_{1} \\
                \theta_{1} & 2\eta_{2} & \theta_{2}\\
                \theta_{1} & \theta_{2} & 2\eta_{3}
            \end{bmatrix},
\end{eqnarray}
where $\theta=(1,3,\theta_{3})$ and $\eta = (2,2,\eta_{3})$. The two first minors
of $A$ are positive, implying that $\det(A) > 0$ if and only if $A$ is
symmetric positive definite (SPD). It is noteworthy that $\det(A) < 0$ if and
only if $\eta_{3} < \frac{17}{15}$. Therefore, if $\eta_{3} < \frac{17}{15}$
and $\theta_{3} < \frac{34}{15}$, the problem is ill-posed. For instance, when
$\theta_{3}=2$ and $\eta_{3} \in (1,\frac{19}{15})$, the Optimal Execution
Problem becomes a saddle point problem, resulting in an unbounded cost function
over $Q$. Note that $\theta_{3}$ does not have any influence on the
optimization. It is chosen to be smaller than $\frac{34}{15}$ only to be
consistent with the initial assumption $\eta_{{i}}>\frac{\theta_{{i}}}{2}$ for
$i\in\{1,2,3\}$.  

An alternative condition, $\eta_{{i}}>(N-1)\frac{\theta_i}{2}$, ensures that $A$ is diagonally dominant, hence SPD. This establishes strict convexity, guaranteeing the existence of an optimal execution strategy without generating negative transaction costs. However, this condition becomes increasingly restrictive for large $N$, and especially as $N$ approaches infinity. %It is therefore crucial to identify less restrictive conditions on $A$ that still ensure we can obtain an optimal solution.
\end{example}

%\begin{remark}
It is possible to show -- using the Lagrange optimality conditions -- that when
the permanent impact $\theta$ is constant, the condition
$\eta_{i}>\frac{\theta}{2}$ for all $i\in \{1,\dots,N\}$ guarantees that the
Optimal Execution Problem is well-posed and the optimal solution does not
change sign during the execution, i.e. the problem is strongly well-posed.
However, the previous example shows that $\eta_{i}>\frac{\theta_{i}}{2}$ is not
sufficient for having a well-posed problem when permanent impact is
time-varying. We therefore need to impose other conditions.

%\end{remark}
Using the property of diagonally dominant matrices, it is straightforward to prove the following.

\begin{proposition}
\label{proposition:diagdom}
If 
\begin{eqnarray}
    2\eta_{i} > \sum_{j=1}^{i-1}\theta_{j}+(N-i)\theta_{i},
\end{eqnarray}
then the Optimal Execution Problem is weakly well-posed.
\end{proposition}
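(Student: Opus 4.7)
The plan is to reduce the claim to showing that the matrix $A$ of Proposition \ref{proposition:CostFunctionDT} is symmetric positive definite, and then to verify this via the classical Levy--Desplanques criterion (strict diagonal dominance with positive diagonal implies SPD for symmetric matrices). Once $A$ is SPD, Proposition \ref{proposition:DTProblemOptimalsolution} furnishes existence and uniqueness of the optimal strategy $\xi^{\star}$, which is the essential content of weak well-posedness in the Hadamard sense. The continuous-dependence part of well-posedness will then follow from the closed-form expression $\xi^{\star} = Q (\mathbf{1}^{T}A^{-1}\mathbf{1})^{-1} A^{-1}\mathbf{1}$, since the map $(\{\theta_i\},\{\eta_i\}) \mapsto A^{-1}$ is smooth on the open set where $A$ is invertible.

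The key computational step is to read off the off-diagonal row sums of $A$. Inspecting the block given in Proposition \ref{proposition:CostFunctionDT}, the $i$-th row of $A$ contains the entries $\theta_1,\dots,\theta_{i-1}$ in columns $1,\dots,i-1$ and the entries $\theta_i$ repeated $N-i$ times in columns $i+1,\dots,N$, with the diagonal entry $2\eta_i$ in column $i$. Since all impact coefficients are positive, the sum of absolute values of off-diagonal entries in row $i$ is exactly
\begin{equation*}
    \sum_{j \neq i} |A_{ij}| = \sum_{j=1}^{i-1}\theta_j + (N-i)\theta_i,
\end{equation*}
so the hypothesis of the proposition is precisely the statement that $A_{ii} > \sum_{j\neq i}|A_{ij}|$ for every row $i$, i.e.\ strict diagonal dominance.

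A real symmetric matrix with strictly positive diagonal that is strictly diagonally dominant is SPD: by Gershgorin's theorem every eigenvalue lies in a disc centered at some $A_{ii}>0$ with radius strictly smaller than $A_{ii}$, hence all eigenvalues are strictly positive. Combined with symmetry of $A$ (which is built into its construction), this gives $A$ SPD. Applying Proposition \ref{proposition:DTProblemOptimalsolution} then yields the unique optimizer $\xi^{\star}$, and continuity of $A \mapsto A^{-1}$ on $GL_N(\mathbb{R})$ together with the explicit formula secures continuous dependence on the impact parameters.

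There is really no hard step here, which is perhaps the main point worth flagging: the condition is deliberately crude and only guarantees weak, not strong, well-posedness. In particular the argument says nothing about the sign pattern of the components of $\xi^{\star}$, so transaction-triggered price manipulation is not excluded, consistent with Example \ref{example: TV impact} and with the authors' distinction between the weak and strong notions. The mild subtlety worth checking is that the hypothesis is strict (not $\ge$), so that Gershgorin gives \emph{strict} positivity of the eigenvalues rather than mere positive semi-definiteness; this is also what ensures $A^{-1}$ and hence $\xi^{\star}$ exists on the whole admissible hyperplane $\Xi_0$.
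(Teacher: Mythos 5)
Your proposal is correct and follows essentially the same route as the paper: both identify the hypothesis as strict diagonal dominance of the symmetric impact matrix $A$ (via exactly the off-diagonal row-sum computation you give), conclude that $A$ is SPD, and invoke Proposition \ref{proposition:DTProblemOptimalsolution} for existence and uniqueness. The only cosmetic difference is that the paper additionally records the factorization $A = PP^{T}$ to note non-negativity of the cost, while you instead spell out the Gershgorin step and the continuous-dependence remark; neither addition changes the substance.
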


Proposition \ref{proposition:diagdom} provides a sufficient condition for Hadamard well-posedness by rendering the problem strictly convex. However, convexity alone does not offer insights into the nature of the optimal solution. Despite $A$ being SPD, the optimal solution might still involve both buys and sells. To address such scenarios, we introduce the B-matrix class (see \cite{pena2001class}), which proves instrumental in resolving these questions.

\begin{definition}
    Let $A=(a_{i,k})_{1\le i,k\le N}$ be a real square matrix. A is a B-matrix when it has positive row sums and all of its off-diagonal elements are bounded above by the corresponding row means, i.e. for all $i=1,\dots,N$
    \begin{eqnarray}
    \label{inequality:Bmatdef}
        \sum_{k=1}^{N}a_{i,k}>0 \text{, and } \frac{1}{N}\left(\sum_{k=1}^{N}a_{i,k}\right)>a_{i,j} \text{, }\forall j \neq i.
    \end{eqnarray}
\end{definition}

Both the B-matrix condition and diagonal dominance imply that the (symmetric) impact matrix \(A\) is positive semidefinite (in fact, under the B-matrix condition it is positive definite; see Lemma~\ref{lemma:BmatrixSPD}). However, the classes of B-matrices and diagonally dominant matrices are not comparable: neither class is contained in the other. For instance, in the case \(N=3\), take \(\eta_1=\eta_2=2\), \(\eta_3=1\), \(\theta_1=1.6\), and \(\theta_2=0.1\). Then \(A\) is diagonally dominant, but it fails the B-matrix inequalities (in particular, in the third row the row mean is smaller than \(\theta_1\)). Conversely, still for \(N=3\), take \(\eta_1=1\), \(\eta_2=0.6\), \(\eta_3=1\), and \(\theta_1=\theta_2=1\). Then \(A\) satisfies the B-matrix condition (cf.\ Proposition~\ref{proposition:Bmatrixequivalence}), but it is not diagonally dominant (the second row violates diagonal dominance).

%When $A$ is a B-matrix the execution problem is well-posed as shown by the following proposition. 

\begin{proposition}
\label{proposition:BmatrixNoTTPM}
    If $A$ is a B-matrix, the corresponding Optimal Execution Problem has a unique solution and is strongly well-posed.
\end{proposition}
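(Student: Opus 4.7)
The plan is to prove the proposition in two steps: first establish that the impact matrix $A$ is symmetric positive definite, which gives existence and uniqueness via Proposition \ref{proposition:DTProblemOptimalsolution}; then show that the optimal solution $\xi^{\star}$ preserves the sign of $Q$ componentwise, which rules out both negative transaction costs and transaction-triggered price manipulation.

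For the first step, I would invoke Pe\~na's theorem that every $B$-matrix is a $P$-matrix, i.e.\ all principal minors are strictly positive. Since $A$ is symmetric by construction, the $P$-matrix property coincides with positive definiteness: for a symmetric matrix, positivity of all principal minors is equivalent via Sylvester's criterion to being SPD. Consequently Proposition \ref{proposition:DTProblemOptimalsolution} immediately delivers the unique optimal strategy $\xi^{\star} = (Q / \mathbf{1}^{T}A^{-1}\mathbf{1})\, A^{-1}\mathbf{1}$, and $\mathcal{C}(\xi) = \tfrac{1}{2}\xi^{T}A\xi \geq 0$ for every admissible $\xi$, excluding price manipulation in the sense of Definition \ref{definition:roundtrip} and hence giving strong well-posedness.

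For the sharper claim that the optimal solution cannot contain both buys and sells, I need $A^{-1}\mathbf{1} > 0$ componentwise, since $\mathbf{1}^{T}A^{-1}\mathbf{1} > 0$ then forces $\xi^{\star}$ to have all components of the same sign as $Q$. I would rely on Pe\~na's additive decomposition $A = \beta \mathbf{1}^{T} + C$, with $\beta_i := \max\{0, a_{i,j} : j \neq i\}$ and $C := A - \beta \mathbf{1}^{T}$. The $B$-matrix hypothesis translates into $C$ being a $Z$-matrix with strictly positive row sums, hence a nonsingular $M$-matrix with $C^{-1} \geq 0$. Applying the Sherman--Morrison identity to the rank-one update yields
\[
A^{-1}\mathbf{1} \;=\; C^{-1}\mathbf{1} \;-\; \frac{\mathbf{1}^{T}C^{-1}\mathbf{1}}{1 + \mathbf{1}^{T}C^{-1}\beta}\, C^{-1}\beta,
\]
from which componentwise positivity should follow by combining $C^{-1}\mathbf{1} > 0$ with a term-by-term comparison of $C^{-1}\mathbf{1}$ and $C^{-1}\beta$ driven by the defining inequality $r_i > N\beta_i$.

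The main obstacle lies exactly in this last componentwise comparison when $\beta$ varies from row to row. In the uniform case $\beta \equiv \beta^{*}\mathbf{1}$ the identity collapses to $A^{-1}\mathbf{1} = C^{-1}\mathbf{1}/(1+\beta^{*}\mathbf{1}^{T}C^{-1}\mathbf{1})$, which is manifestly positive; in the general case the interaction between the varying $\beta_i$ and the non-negative entries of $C^{-1}$ has to be controlled by a direct entrywise estimate, or, as a fallback, by a discrete maximum-principle argument applied to $A\xi^{\star} = \lambda \mathbf{1}$: at the index $k$ that minimizes $\xi^{\star}_k$, the rewriting $r_k \xi^{\star}_k + \sum_{j \neq k} a_{k,j}(\xi^{\star}_j - \xi^{\star}_k) = \lambda$ combined with $r_k > N \beta_k \geq N a_{k,j}$ should be strong enough to force $\xi^{\star}_k > 0$ and thereby close the proof.
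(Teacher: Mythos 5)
Your overall architecture coincides with the paper's: symmetric B-matrix $\Rightarrow$ SPD $\Rightarrow$ unique solution via Proposition \ref{proposition:DTProblemOptimalsolution}, followed by componentwise nonnegativity of $A^{-1}\pmb{1}$ to rule out transaction-triggered price manipulation. The first half is sound (the paper invokes Pe\~na's result, restated as Proposition \ref{proposition:BmatrixSPD}, where you go through the P-matrix property and Sylvester's criterion; both are valid routes to SPD).

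The gap is in the second half. The paper disposes of the claim $A^{-1}\pmb{1}\ge 0$ by citing Christensen's Lemma 4 (Lemma \ref{lemma:christensen}), whereas you attempt to rederive it and stop exactly at the decisive step. Your Sherman--Morrison identity is correct, and the facts $C^{-1}\ge 0$, $C^{-1}\pmb{1}>0$, $C^{-1}\beta\ge 0$ all hold, but the required componentwise inequality $(1+\pmb{1}^{T}C^{-1}\beta)\,(C^{-1}\pmb{1})_i \ge (\pmb{1}^{T}C^{-1}\pmb{1})\,(C^{-1}\beta)_i$ is precisely the content of the lemma, and you give no argument for it when $\beta$ varies across rows. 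The fallback you propose does not close the gap either: at the index $k$ minimizing $\xi^{\star}_k$, the row identity $r_k\xi^{\star}_k+\sum_{j\ne k}a_{k,j}(\xi^{\star}_j-\xi^{\star}_k)=\mu$ with $\mu=Q/\pmb{1}^{T}A^{-1}\pmb{1}>0$, combined with $a_{k,j}<r_k/N$ and $\xi^{\star}_j-\xi^{\star}_k\ge 0$, yields $\sum_{j\ne k}a_{k,j}(\xi^{\star}_j-\xi^{\star}_k)\le \tfrac{r_k}{N}(Q-N\xi^{\star}_k)$ and hence only the bound $\mu\le r_kQ/N$ on the Lagrange multiplier: the terms in $\xi^{\star}_k$ cancel and no sign information on $\xi^{\star}_k$ survives. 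As written, your argument therefore establishes existence, uniqueness and nonnegative cost, but not the sign-consistency of the optimal trades that Definition \ref{definition:strongly well posed} requires. Either cite the Christensen lemma as the paper does, or supply the missing entrywise estimate.
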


The B-matrix condition is easy to verify and, by
Proposition~\ref{proposition:BmatrixNoTTPM}, it ensures strong well-posedness
of the discrete-time Optimal Execution Problem (existence, uniqueness and
absence of transaction-triggered price manipulation). In particular, it
imposes a restriction not only on the level of permanent impact, but also
on its rate of decrease relative to temporary impact.
\begin{proposition}
\label{proposition:_ct_limit_for_dt_model}
Assume the permanent impact coefficients $(\theta_i)_{i=1}^N$ form a
non-increasing sequence,
\[
\theta_1 \ge \theta_2 \ge \dots \ge \theta_N>0.
\]
Let
\[
\underline\eta = \min_{1\le i\le N} \eta_i.
\]
If
\begin{equation}
\label{inequality:BmatRestrictive}
2\,\underline\eta \;\ge\; N\theta_1 - (N-1)\theta_N,
\end{equation}
then the impact matrix $A$ defined in \eqref{eq:encoding_matrix} is a
$B$-matrix.
\end{proposition}
Proposition~\ref{proposition:_ct_limit_for_dt_model} provides a simple
sufficient condition to check the B-matrix property directly on the discrete
impact coefficients. When the latter are obtained by filtering or estimating smooth
continuous-time curves, this yields a first constraint on the admissible
dynamics of $(\theta_t,\eta_t)$.

When the discrete coefficients are obtained by sampling smooth curves, an
economically meaningful high-frequency limit requires the temporary-impact
coefficient to scale as $1/\Delta t$, since $\xi_i$ represents a trade \emph{size}
executed over the interval $[t_{i-1},t_i]$. Under this scaling, imposing the
$B$-matrix property on the sequence of matrices associated with refining time
grids yields the following continuous-time admissibility condition.

\begin{proposition}
\label{prop:CT_from_Bmatrix}
Let $T>0$. Let $\theta:[0,T]\to(0,\infty)$ and $\eta:[0,T]\to(0,\infty)$ be
continuous, and assume that $\theta$ is strictly decreasing. For each
$N\in\mathbb{N}$, set
$\Delta t=T/N$ and $t_i=i\Delta t$ for $i=0,\dots,N$.
Define sampled coefficients on $[t_{i-1},t_i]$ by
\[
\theta_i^{(N)}=\theta(t_{i-1}),\qquad
\eta_i^{(N)}=\frac{\eta(t_{i-1})}{\Delta t},
\qquad i=1,\dots,N.
\]
Let $A^{(N)}$ be the impact matrix \eqref{eq:encoding_matrix} built from
$\bigl(\theta_i^{(N)},\eta_i^{(N)}\bigr)_{i=1}^N$ (i.e.\ $a^{(N)}_{ii}=2\eta_i^{(N)}$
and $a^{(N)}_{ij}=\theta_{\min(i,j)}^{(N)}$ for $i\neq j$).
Assume that, for every $N$, the matrix $A^{(N)}$ is a $B$-matrix.

Then, for every $t\in[0,T]$,
\begin{equation}
\label{eq:CT_pointwise_admissibility}
\frac{2}{T}\eta(t)
+\frac{1}{T}\int_{0}^{t}\theta(u)\,du
+\Bigl(1-\frac{t}{T}\Bigr)\theta(t)
\;\ge\;\theta(0).
\end{equation}
If, in addition, $\theta\in C^{1}([0,T])$, then
\eqref{eq:CT_pointwise_admissibility} can be rewritten as
\begin{equation}
\label{eq:CT_weighted_slope}
2\,\eta(t)\;\ge\;\int_{0}^{t}(T-s)\,(-\dot\theta(s))\,ds,
\qquad \forall t\in[0,T].
\end{equation}
In particular, at terminal time,
\begin{equation}
\label{eq:CT_terminal_condition}
2\,\eta(T)\;\ge\;\int_{0}^{T}(T-s)\,(-\dot\theta(s))\,ds
= T\theta(0)-\int_{0}^{T}\theta(u)\,du.
\end{equation}
\end{proposition}

\begin{remark}
\label{remark:CT_from_Bmatrix_nonmonotone}
The monotonicity of $\theta$ is only used to identify the largest
off-diagonal entry in each row of $A^{(N)}$. If $\theta$ is merely continuous,
the same argument yields \eqref{eq:CT_pointwise_admissibility} with the
right-hand side replaced by the running maximum
$\theta^{\ast}(t):=\max_{u\in[0,t]}\theta(u)$. In the strictly decreasing case,
$\theta^{\ast}(t)=\theta(0)$ for all $t\in[0,T]$.
\end{remark}

Proposition~\ref{prop:CT_from_Bmatrix} yields a continuous-time
\emph{admissibility} condition implied by discrete-time strong
well-posedness (via the $B$-matrix property on refined grids).
Written as
\[
2\,\eta(t)\;\ge\;\int_{0}^{t}(T-s)\,(-\dot\theta(s))\,ds,
\]
it highlights the interplay between the two impact channels.

The quantity $-\dot\theta(s)$ is the \emph{instantaneous rate at which permanent
impact decreases}. Faster decrease increases incentives to insert buy--sell
round-trips: one may buy to move the price and later unwind after the
permanent component has partially dissipated. The weight $(T-s)$ reflects
that decrease occurring earlier is more destabilizing, because it affects a
larger fraction of the remaining horizon.

Temporary impact $\eta(t)$ acts as an instantaneous trading friction that
penalizes aggressive intermediate trades. The inequality therefore says
that, to preserve strong well-posedness (and in particular to rule out
transaction-triggered price manipulation),
temporary impact must be sufficiently large at each time $t$ to dominate the
remaining-horizon-weighted cumulative decay of permanent impact up to $t$.

The terminal condition
\[
2\,\eta(T)\;\ge\;T\theta(0)-\int_{0}^{T}\theta(u)\,du
\]
shows that the most stringent constraint is driven by the \emph{average permanent impact} over the horizon relative to the initial level $\theta(0)$:
substantial decrease of permanent impact must be compensated by sufficiently
strong temporary trading costs.

\medskip
We return to the continuous-time constraint \eqref{eq:CT_weighted_slope} in
Section~\ref{secton3:CT}. In particular, in
Subsection~\ref{subsection:CT_second_order} we show that it implies the global
second-order inequality \eqref{inequality:localminimumCalculusofVariations_ineq},
and we compare this sampling-induced admissibility with the purely
continuous-time criterion of Proposition~\ref{proposition:CT_is_well_defined}.

\subsection{The continuous time framework}

%\begin{definition}[Trading strategy in continuous time]
We now turn to the main objective of this paper, namely the determination of
well-posedness of the Optimal Execution Problem with time-varying impact
parameters in continuous time. Consider trading over the interval $t\in[0,T]$.
Throughout, we restrict to deterministic strategies, represented by an
inventory trajectory $\zeta:[0,T]\to\mathbb{R}$ and its trading rate
$\dot\zeta$.
%\end{definition}
\begin{definition}
\label{def:liquidation}
In continuous time, for a fixed initial inventory of $Q>0$ shares to
liquidate, we define the space of admissible strategies as
\[
\Xi(Q)
= \Big\{\zeta\in\mathcal{C}^{2}([0,T]):\ \zeta_{0}=Q,\ \zeta_{T}=0\Big\}.
\]
Admissible strategies may include both selling periods
($\dot\zeta_t<0$) and buying periods ($\dot\zeta_t>0$).
\end{definition}

To streamline notation, we take \(Q>0\) to denote an initial long position to be
liquidated over \([0,T]\) in both the discrete- and continuous-time frameworks.
In discrete time, selling corresponds to trades \(\xi_i<0\), while in continuous
time it corresponds to a negative trading rate \(\dot\zeta_t<0\). Admissible
strategies may include intermediate buying periods (\(\xi_i>0\) or
\(\dot\zeta_t>0\)), provided the boundary conditions \(\zeta_0=Q\) and
\(\zeta_T=0\) are satisfied.

%\begin{definition}[Time-varying Almgren-Chriss model in continuous time]
Following \cite{almgren2001optimal} we assume that the dynamics of the efficient price $S_t$ and cash process $X_t$ are given by
\begin{align}
        \label{equation:_CT_cash_process}
        dS_t &= \theta_t \dot \zeta_t dt+\sigma dW_t, \\
        \label{equation:_CT_cash_process_2}
        dX_t &= -\dot \zeta_t(S_t+\eta_t \dot \zeta_t)dt,
\end{align}
with permanent and temporary impact $\theta_t$ and $\eta_t$, respectively, and
volatility $\sigma \in \mathbb{R}_{+}$. We assume that $\theta$ and $\eta$ are
deterministic and continuously differentiable on $[0,T]$.
%\end{definition}

The connection of the continuous time model with the discrete time one is provided by the following proposition.

\begin{proposition}
\label{proposition:EMdiscretization}
The discrete-time model of Eqs.~\eqref{eq:S} and \eqref{eq:S_tilde} is obtained
as the Euler--Maruyama discretization of the continuous-time model
\eqref{equation:_CT_cash_process}--\eqref{equation:_CT_cash_process_2}.
\end{proposition}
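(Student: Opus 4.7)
The plan is to apply the Euler--Maruyama scheme to \eqref{equation: CT cash process}--\eqref{equation: CT cash process 2} on the uniform partition already introduced for the discrete model, namely $t_k = (k-1)T/N$ with step size $\Delta t = T/N$, and then read off the identifications that recover \eqref{eq:S} and \eqref{eq:S_tilde}. The key dictionary between the two formulations is the correspondence between the instantaneous trading velocity $\dot\zeta$ and the discrete trade sizes, via
\begin{equation*}
   \xi_k \;=\; \zeta_{t_k}-\zeta_{t_{k-1}} \;\approx\; \dot\zeta_{t_k}\,\Delta t,
\end{equation*}
so that the total share constraint $\sum_k \xi_k = Q$ reproduces the boundary conditions $\zeta_0 = Q$, $\zeta_T = 0$ of Definition~\ref{def:liquidation} (after a sign convention on $Q$).

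First I would discretize \eqref{equation: CT cash process}. Euler--Maruyama on the interval $[t_{k-1},t_k]$ gives
\begin{equation*}
   S_{t_k} \;=\; S_{t_{k-1}} \;+\; \theta_{t_k}\,\dot\zeta_{t_k}\,\Delta t \;+\; \sigma_{t_k}\,(W_{t_k}-W_{t_{k-1}}).
\end{equation*}
Substituting $\dot\zeta_{t_k}\Delta t = \xi_k$ and setting $\theta_k := \theta_{t_k}$ together with $\epsilon_k := \sigma_{t_k}(W_{t_k}-W_{t_{k-1}})$, which by independence of Brownian increments satisfies $\epsilon_k\overset{\text{ind}}{\sim}\mathcal{N}(0,\sigma_{t_k}^{2}\Delta t)$, reproduces \eqref{eq:S} exactly, with the discrete noise variances identified as $\sigma_n^2 = \sigma_{t_n}^2\Delta t$.

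Next I would discretize the cash dynamics \eqref{equation: CT cash process 2}, evaluating the drift at the left endpoint so that $S_{t_{k-1}}$ appears (this ensures previsibility and matches the convention $\tilde S_k = S_{k-1}+\eta_k\xi_k$). One obtains
\begin{equation*}
   X_{t_k}-X_{t_{k-1}} \;=\; -\dot\zeta_{t_k}\bigl(S_{t_{k-1}}+\eta_{t_k}\dot\zeta_{t_k}\bigr)\,\Delta t \;=\; -\xi_k\Bigl(S_{k-1}+\tfrac{\eta_{t_k}}{\Delta t}\,\xi_k\Bigr).
\end{equation*}
Defining $\eta_k := \eta_{t_k}/\Delta t$, the bracketed quantity is precisely $\tilde S_k$ in \eqref{eq:S_tilde}, and summing $X_{t_k}-X_{t_{k-1}} = -\xi_k \tilde S_k$ recovers (up to the constant $-QS_0$ coming from the initial-value term) the implementation shortfall \eqref{eq:C}, so that $\mathbb{E}_0[\mathrm{IS}(\xi)]$ in Problem~\eqref{problem:DT} is the Euler--Maruyama discretization of $\mathbb{E}_0[X_T - QS_0]$.

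The only subtle point, and the one I would flag as the ``hard'' step, is the scaling $\eta_k = \eta_{t_k}/\Delta t$: the temporary impact parameter is not simply sampled from its continuous counterpart, because the continuous cost $\int_0^T \eta_t\dot\zeta_t^2\,dt$ is quadratic in the \emph{velocity}, whereas the discrete cost $\sum_k \eta_k \xi_k^2$ is quadratic in the \emph{amount traded per step}, and the two agree only after a factor of $1/\Delta t$ is absorbed. This is exactly the rescaling announced in Corollary~\ref{corollary: ct limit for dt model} (where $\tilde\eta_i = \eta(N/T)$ after the appropriate identification), and it is what allows the discrete B-matrix bound $2\min\eta_i \gtrsim \bar\theta\,N^2/T^2\cdot(\cdot)$ to pass to a finite continuous-time limit. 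Once this scaling is fixed, the discretization of \eqref{eq:S} and \eqref{eq:S_tilde} is verified termwise and the proposition follows.
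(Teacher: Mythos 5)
Your proposal is correct and follows essentially the same route as the paper: apply Euler--Maruyama on the uniform grid, identify the discrete trade with velocity times step size, sample the permanent impact and noise directly, and absorb a factor of $1/\Delta t$ into the temporary impact so that $\sum_k \eta_k \xi_k^2$ matches $\int_0^T \eta_t\dot\zeta_t^2\,dt$ --- exactly the identification ``$\eta_i/\delta_i$ represents $\eta_{i+1}$'' used in the paper's proof. The only nitpick is your parenthetical about Corollary~\ref{corollary: ct limit for dt model}, which actually uses $\tilde\eta_i = \eta(N/T)^2$ rather than $\eta(N/T)$, but this is a side remark and does not affect the argument.
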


Using integration by parts (It\^{o}'s formula) and assuming $X_0=0$, we get the
total cash generated by the selling procedure:
\begin{eqnarray}
    X_T(\zeta)
    =QS_0+\int_0^T(\theta_t \zeta_t \dot \zeta_t-\eta_t\dot \zeta_t^2)dt
    +\sigma\int_0^T \zeta_t\,dW_t.
\end{eqnarray}
The execution cost is defined as the initial value of the position minus the expected terminal cash. To align with the literature, since we consider a selling procedure, we consider the maximization problem that focuses on maximizing the terminal cash in excess. 
Define the cash functional 
\begin{eqnarray}
    \label{eq:cash}
    \mathcal{J}(\zeta)
    = \mathbb{E}_{0}\left[X_T(\zeta)-Q S_0\right]
    = \int_0^T \left(\theta_t \dot \zeta_t \zeta_t - \eta_t \dot \zeta_{t}^2\right) dt,
\end{eqnarray}
(the stochastic integral has zero expectation since $\zeta$ is deterministic), and obtain the continuous-time
Optimal Execution Problem formulation
\begin{eqnarray}
\label{problem:OTContinuousTime}
    \zeta^{\star} = \arg \max_{\zeta\in\Xi(Q)} \mathcal{J}(\zeta).
\end{eqnarray}

This convention is consistent with the one in discrete time.
Equation \eqref{problem:OTContinuousTime} maximizes the expected cash in
excess of the initial mark-to-market value, whereas its discrete-time
counterpart \eqref{problem:DT} minimizes expected trading costs. These
dual formulations are standard in the literature.

The maximization of the cash functional in the r.h.s. of
Eq.~\eqref{eq:cash} can be obtained by using the Euler–Lagrange equation,
see \cite{gueant2016financial}.
\begin{definition}[Extremal point]
\label{def:extremal_point}
An extremal point of $\mathcal{J}$ is a function $\zeta\in\Xi(Q)$ that satisfies
the boundary-value problem
\begin{align}
    \label{eq:extremal_point_BVP}
    \begin{cases}
        2\eta_t \ddot \zeta_t+2\dot \eta_t\dot \zeta_t-\dot \theta_t\zeta_t&=0, \\
        \zeta_{t=0}&=Q,\\
        \zeta_{t=T}&=0.
    \end{cases}
\end{align}
\end{definition}

Finally, we recall the definition of transaction-triggered price manipulation given by \cite{alfonsi2012order}.

\begin{definition}
\label{definition:TTPMcont}
A continuous-time model admits transaction-triggered price manipulation if
there exists an inventory size $Q>0$ and a corresponding admissible sell
strategy $\tilde \zeta$ such that
\[
\mathcal{J}(\tilde{\zeta}) >
\sup \Big\{ \mathcal{J}(\zeta)\ \big|\ \zeta\in\Xi(Q),\ \dot \zeta_t \le 0\
\forall t\in[0,T]\Big\}.
\]
That is, the expected cash from a liquidation can be increased by inserting
intermediate buy trades (periods with $\dot\zeta_t>0$).
\end{definition}

%and a theorem, due to \cite{alfonsi2010optimal} connecting the two types of manipulation.

\begin{proposition}
    \label{proposition:NoPTPimpliesNOPM}
     If a price impact model does not admit transaction-triggered price manipulation, then it does not admit arbitrage in the sense that there is no admissible strategy that generates negative transaction costs \cite{alfonsi2010optimal}.
\end{proposition}

%

%Let us clarify the difference between an Optimal Execution Model and an Optimal Execution Problem. The Optimal Execution Model refers to the general formulation of optimal execution derived from the impact model, specifically using Almgren-Chriss with time-varying impact in our framework. The Optimal Execution Problem is the mathematical problem defined by the Optimal Execution Model, understood as an optimization problem in the Hadamard sense. Understanding the strong or weak well-posedness of the Optimal Execution Problem presents a significant mathematical challenge. The qualitative results derived from this analysis are important for the practical applicability of the model.

\section{Well posedness of the continuous time Optimal Execution Problem}
\label{secton3:CT}

This Section considers the continuous time version of the time varying Almgren-Chriss optimal execution model. When impacts are constant, the derivation of the optimal solution is straightforward. We start by providing a condition for the existence and uniqueness of solutions in the time-varying case which is rather more complex. This is illustrated by some compelling examples. Then we delve deeper by formulating second-order conditions, and analyzing potential solution dynamics via numerical examples that reflect real-world financial markets. %We will revert to a discrete time formulation in Section \ref{secton2:DT}.

\subsection{Existence and uniqueness of the solution}
To begin, let us introduce a sufficient condition for the existence and uniqueness of the solution of the optimization problem.
\begin{proposition}
\label{proposition:ExistenceandUniquenessCT}
Let $f: (t,u_1,u_2)\in [0,T]\times\mathbb{R}^{2}\mapsto -\frac{\dot \eta_{t}}{\eta_{t}}u_{2} + \frac{\dot \theta_{t}}{2\eta_{t}}u_{1}\in \mathbb{R}$. 
If $f(t,\cdot)$ is continuous, uniformly Lipschitz, $\left \{\frac{\partial f(t,\cdot)}{\partial u_{i}}\right \}_{i=1,2}$ are continuous, and if there exists $\lambda \in (0,1)$ such that
       \begin{align}
       \label{existence_and_uniqueness_inequalities}
        \max\left\{\frac{1}{2}\int_{0}^{T}\left| \frac{\dot \theta_{t}}{\eta_{t}}\right| dt , \int_{0}^{T}\left| \frac{\dot \eta_{t}}{\eta_{t}}\right|dt \right\}&\le \log(1+2\lambda),
        \end{align}
then the Optimal Execution Problem \eqref{problem:OTContinuousTime} has a unique solution.
\end{proposition}

\subsection{Motivating Examples}

In order to motivate the study of the Optimal Execution Problem in a time-varying context, let us look at a few simple examples.

\begin{example}[Constant impact]
\label{example:_constant_impact_CT}

In the original Almgren-Chriss model the impact parameters are constant, i.e., $\theta_{t}=\theta>0$ and $\eta_{t}=\eta>0$. Consequently, $\dot{\eta}_t=0$ and $\dot{\theta}_t=0$. The Euler-Lagrange equations are
%This configuration aligns with the Almgren-Chriss problem, transforming the optimization problem into:
%\begin{eqnarray}
%    \zeta^{\star}\in \arg \max_{\zeta\in \Xi} \int_0^T(\theta \dot \zeta_{t} \zeta_t-\eta \dot \zeta_{t}^2)dt
%\end{eqnarray}

%As a calculus of variation problem, applying the Euler-Lagrange formula yields the following second-order ordinary differential equation:

\begin{align}
    \begin{cases}
    \ddot \zeta_{t} &= 0,\\
    \zeta_{t=0}&=Q,\\
    \zeta_{t=T}&=0.
\end{cases}
\end{align}
The solution is $\zeta_t=Q(1-\frac{t}{T})$ i.e. the program trades at a constant rate $ \dot \zeta_{t} = -\frac{Q}{T}$. This is called the TWAP strategy. The solution is unique, because inequality \eqref{existence_and_uniqueness_inequalities} is trivially verified $\forall \lambda \in (0,1)$.
\end{example}

\begin{example}[Constant permanent impact with time-varying temporary impact]
\label{example:constantpermanentCT}

When one couples a constant permanent impact $\dot{\theta}_t=0$ with a temporary impact varying in time $\dot{\eta}_t\ne0$, the optimal execution strategy is no longer constant. The Euler-Lagrange equations are
\begin{align}
    \begin{cases}
    \eta_t \ddot \zeta_t+\dot \eta_t\dot \zeta_t&=0,\\
    \zeta_{t=0}&=Q,\\
    \zeta_{t=T}&=0.
\end{cases}
\end{align}
The first equation ensures that the derivative of $\eta_t\dot\zeta_t$ is zero. The solution is given by
\begin{eqnarray}
\label{CT_and_DT_are_not_consistent}
     \dot \zeta_{t}=-\frac{Q}{||\eta||_{-1}} \frac{1}{\eta_t},
\end{eqnarray}
with the $L^{-1}$ norm defined as $||\eta||_{-1}\equiv \int_0^T \eta_t^{-1}dt$. One trades faster when impact is small and slower when impact is large. The solution is unique, and it is derived using the boundary conditions and applying integration rules. To find this existence and uniqueness, we did not have to leverage Proposition \ref{proposition:ExistenceandUniquenessCT}. Notice that $ \dot \zeta_{t} < 0$ is negative on $[0,T]$, so the optimal solution avoids transaction-triggered price manipulation. Consequently, price manipulation is not possible in this setting.
\end{example}

\begin{example}[Constant temporary and linearly increasing permanent impact]

%In the previous example, we observed that a time-varying temporary impact combined with a constant permanent impact yields a unique solution characterized by monotonic inventory. 
Let us now look at the reverse scenario of the previous example with constant temporary impact $\eta > 0$ and permanent impact that varies over time. %Example \ref{secton3:CT}.\ref{example:linearlydecreasingpermanentCT} outlines straightforward sufficient conditions under which the Optimal Execution Problem either lacks a solution or exhibits execution strategies with sign changes.

Instead of solving the general case of time-varying permanent impact, we consider the linear case
\begin{equation}
\theta_t=a+b\cdot t,
\label{eq:linearlyvaryingimpact}
\end{equation}
where $b>0$ and $a>0$. This impact regime gives the % optimization problem:
%\begin{align}
%    \zeta^{\star}\in \arg \max_{\zeta\in \Xi} \int_0^T(\theta_{t}  \dot \zeta_{t}\zeta_t-\eta  \dot \zeta_{t}^2)dt
%\end{align}
%We apply 
Euler-Lagrange equations
\begin{align}
    \begin{cases}
        \ddot \zeta_{t} - \gamma\zeta_{t}&=0,\\
        \zeta_{t=0}&=Q,\\
    \zeta_{t=T}&=0.
    \end{cases}
\end{align}
where $\gamma =\frac{b}{2\eta}$ \footnote{The quantity \(\gamma\) is central in our analysis and will be studied and presented more generally in Proposition \ref{proposition:CT_is_well_defined}.}. The optimal solution is given by
\begin{eqnarray}
\zeta_t=Q \frac{\sinh [\sqrt{\gamma} (T-t)]}{\sinh[\sqrt{\gamma} T]}.
\end{eqnarray}

Notably, this solution aligns with that of a risk-averse investor facing constant permanent impact. This mapping is achieved by substituting $b \to \lambda \sigma^2$, where $\lambda$ represents the risk aversion coefficient of a CARA agent \cite{gueant2016financial}. Additionally, $ \dot \zeta_{t}<0$, which ensures the absence of price manipulation.
\label{example:tempconstantandincreasingpermanent}
\end{example}

\begin{example}[Constant temporary and linearly decreasing permanent impact]
\label{example:linearlydecreasingpermanentCT}

Equation \eqref{eq:linearlyvaryingimpact} also allows for decreasing permanent impact with $b<0$, but we must assume $a>|b|T$ to keep $\theta_t$ positive. The optimal solution takes the form
\begin{eqnarray}
    \zeta_t=Q \frac{\sin [\sqrt{|\gamma|} (T-t)]}{\sin[\sqrt{|\gamma|} T]}.
\end{eqnarray}
In this scenario, price manipulation occurs when the initial velocity is positive, $\dot \zeta_0>0$. This makes it optimal to buy at the beginning and sell (more than $Q$) later on. The onset of such behavior is observed when
\begin{eqnarray}
    |b|>\frac{\pi^2\eta}{2T^2}.
\end{eqnarray}
%Alternatively, the same condition can be expressed as
%\begin{eqnarray}
%    T>\sqrt{\frac{\eta\pi^2}{2|b|}}.
%\end{eqnarray}
It is worth noting that when $|b|=2n\pi^2\eta/T^2$ (with $n\in\mathbb{N})$, the Optimal Execution Problem lacks a solution. That particular setting of impacts violates Proposition \ref{proposition:ExistenceandUniquenessCT}. %However, this impact scenario is isolated, $\{(b,\eta)||b| = 2n\pi^2\eta/T^2, n^{2}\in\mathbb{N}\}$ are the unique points where the Optimal Execution Problem \eqref{problem:OTContinuousTime} does not have a solution.

For $|b| > \frac{\pi^2\eta}{2T^2}$, the optimal solution oscillates between positive and negative values of $\zeta_t$, indicating transaction-triggered price manipulation.

It is important to note that transaction-triggered price manipulation does not necessarily imply negative transaction costs. The cash in excess is
\begin{align}
     \frac{Q^2}{2} \left(-a + \frac{b \cot\sqrt{|\gamma|}}{\sqrt{|\gamma|}}\right)\,.
\end{align}
As expected, for constant permanent impact, this quantity is negative and reduces to the standard quadratic cost $-\frac{aQ^2}{2}$. However, there are cases where the cash in excess is positive. For example, with $T=1$, $a=5$, and $\eta=0.25$, the excess cash is positive when $b<-4.09$, that is,the cash obtained with the liquidation is greater than the initial mark-to-market value. Thus, in this simple case, we have a proper price manipulation.
\end{example}

\subsection{Second order conditions and well-posedness}\label{subsection:CT_second_order}

We now examine the well-posedness of the cash maximization problem, and in particular the conditions under which the extremal point of our calculus of variations problem is a maximum. Specifically, we provide both necessary and sufficient conditions for our Optimal Execution Problem \eqref{problem:OTContinuousTime} to qualify as a maximization problem. Additionally, we establish impact regimes in which the solution to the Euler-Lagrange equation represents the unique maximum for the problem.

%\bigbreak 

%We recall the optimal execution problem in continuous time 
%\begin{eqnarray}
%\zeta^{\star} = \arg \max_{\zeta\in\Xi}\int_0^T(\theta_t\dot \zeta_t\zeta_t-\eta_t\dot \zeta_t^2)dt
%\end{eqnarray}
%where $\Xi = \{\zeta\in\mathcal{C}^{2}(\mathbb{R},\mathbb{R}): \zeta_{t\rightarrow0}=Q\text{ and }\zeta_{t \rightarrow T}=0.\}$

\begin{proposition}
\label{proposition:loclaminimumCalculusofVariations}
    If an extremal point to$\mathcal{J}$ is a local maximum, then necessarily
    \begin{eqnarray}\label{inequality:localminimumCalculusofVariations_ineq}
        \int_{0}^{T}\phi(t)\dot{\phi}(t)\theta_{t}\,dt \le \int_{0}^{T}[\dot{\phi}(t)]^{2}\eta_{t}\,dt,
\quad \forall \phi \in \mathcal{C}^{\infty}_{c}((0,T))
    \end{eqnarray}
    where $\mathcal{C}^{\infty}_{c}((0,T))$ is the space of smooth test
    functions with compact support in $(0,T)$ (in particular, vanishing near
    $0$ and $T$).
\end{proposition}

Since this property is difficult to verify in practice, the following proposition provides a sufficient condition easy to test.

\begin{proposition}
\label{proposition:CT_is_well_defined}
    Let $\bar{\eta}=\min_{0 \le t \le T} \{ \eta_{t} \}$ and let
    $\bar{\theta}=\max_{0 \le t \le T}\max\{-\dot \theta_{t},0\}$. Define
    $\gamma=\frac{\bar{\theta}}{2\bar{\eta}}$. Assuming the Optimal Execution
    Problem has a unique solution, if $\sqrt{\gamma}T < \pi$, then it is a
    maximum, and the Optimal Execution Problem is weakly well-posed. 
\end{proposition}

The proof of such a result is based on classical perturbation method from calculus of variation, and the Sturm-Picone lemma, see \ref{lemma:SturmPicone} and proof \ref{proof:proposition:CT_is_well_defined}. 

%\begin{remark}
    Adding the assumptions of Proposition \ref{proposition:ExistenceandUniquenessCT} to Proposition \ref{proposition:CT_is_well_defined} we ensure that the Optimal Execution Problem is a maximization problem, with a unique solution, and this solution is the unique solution of the non-autonomous boundary value problem \eqref{eq:extremal_point_BVP}.
%\end{remark}

\medskip 
Proposition~\ref{proposition:CT_is_well_defined} establishes a second-order condition that not only ensures well-posedness of the optimal execution problem but also offers economic insight into its structural constraints under time-varying market impact. The inequality \(\sqrt{\gamma}T < \pi\) implies that the impact ratio \(\gamma = \frac{\bar{\theta}}{2\bar{\eta}}\) must be bounded, where \(\bar{\theta}\) denotes the maximal rate of decay of the permanent impact and \(\bar{\eta}\) the minimal level of temporary impact. 
This condition enforces a bound on how rapidly the permanent impact \(\theta_t\) may decline relative to the baseline execution cost. Economically, since temporary impact reflects the immediate cost of trading, it is always incurred. However, the permanent impact—capturing the longer-term informational footprint of trades—must not decay too quickly; otherwise, agents could exploit this dynamic to generate arbitrage-like strategies. Mathematically, this constraint prevents the emergence of change of concavity in the value function.

\begin{remark}
    Proposition \ref{proposition:loclaminimumCalculusofVariations} establishes a necessary condition for $\theta$ and $\eta$, when $\xi$ is a local maximum. In order to be rigorous, we have to prove that the constraint $\sqrt{\gamma}T<\pi$ does not contradict the inequality \eqref{inequality:localminimumCalculusofVariations_ineq}. 
    Note that using integration by parts and the Poincar\'e inequality with $d=T$ (diameter of the domain) and $p=2$, we get 
    \begin{align*}
        \int_{0}^{T}\phi(t)\dot \phi(t)\theta_{t}dt &= \frac{1}{2}\int_{0}^{T}\phi(t)^{2}(-\dot \theta_{t})dt \\
        &\le\frac{\bar{\theta}}{2}\int_{0}^{T}\phi(t)^{2}dt\\
        &\le\frac{\bar{\theta}T^{2}}{2\pi^{2}}\int_{0}^{T}(\dot\phi(t))^{2}dt\\
        &\le\frac{\gamma T^{2}}{\pi^{2}}\int_{0}^{T}(\dot\phi(t))^{2}\eta_{t}dt \\
        \int_{0}^{T}\phi(t)\dot \phi(t)\theta_{t}dt&<\int_{0}^{T}(\dot\phi(t))^{2}\eta_{t}dt.\\
    \end{align*}

Thus we proved that our assumption does not contradict the necessary conditions for $\zeta$ to be a local maximum, it is a sufficient condition for preserving Inequality \eqref{inequality:localminimumCalculusofVariations_ineq}. This finding has significant implications for the physical interpretation of our model. While Proposition \ref{proposition:loclaminimumCalculusofVariations} establishes a sufficient condition for the cash maximization problem, saturating Poincar\'e's inequality, turns our condition from being sufficient to both necessary and sufficient in certain regimes. Consequently, the inequality becomes optimal, and cannot be relaxed.
\end{remark}

\begin{remark}
\label{remark:comparison_sampling_admissibility}
Proposition~\ref{proposition:CT_is_well_defined} is a purely continuous-time
sufficient condition based on uniform bounds $\eta(t)\ge \bar\eta$ and
$-\dot\theta(t)\le \bar\theta$, together with the sharp constant $\pi$ in the
underlying Sturm--Picone/Poincar\'e argument.

By contrast, Proposition~\ref{prop:CT_from_Bmatrix} is \emph{sampling-induced}:
it arises from requiring \emph{strong discrete-time well-posedness on every refined sampling grid}.
It yields the pointwise constraint \eqref{eq:CT_weighted_slope}, which implies the global
second-order inequality \eqref{inequality:localminimumCalculusofVariations_ineq}.

Moreover, under the same uniform bounds, \eqref{eq:CT_weighted_slope} is implied by
\[
2\bar\eta \;\ge\; \sup_{t\in[0,T]}\bar\theta\int_0^t (T-s)\,ds
= \frac{\bar\theta T^{2}}{2},
\qquad\text{i.e.}\qquad \sqrt{\gamma}\,T \le \sqrt{2},
\]
which is stricter than $\sqrt{\gamma}\,T<\pi$.
This reflects the fact that sampling-robust strong well-posedness is more
demanding than weak well-posedness in continuous time.
\end{remark}

\medskip
The continuous-time formulation also provides a bridge back to our discrete-time results.
Proposition~\ref{proposition:_ct_limit_for_dt_model} makes precise the continuous-time
limit of the sampled discrete model (with the canonical $\eta\mapsto \eta/\Delta t$
scaling). In this sense, requiring the $B$-matrix property of the discrete impact
matrix on every refined grid (as in Proposition~\ref{prop:CT_from_Bmatrix}) enforces a
\emph{sampling-robust} notion of strong well-posedness: it rules out negative
transaction costs and prevents transaction-triggered price manipulation at any
execution frequency. This requirement is, however, conservative: it yields sufficient
conditions but not a full characterization of all impact settings that guarantee
strong (or even weak) well-posedness.

For decreasing permanent impact, our results lead to a simple hierarchy of sufficient
criteria. Under the uniform bounds of Proposition~\ref{proposition:CT_is_well_defined},
the pointwise admissibility constraint \eqref{eq:CT_weighted_slope} implies
\[
\gamma=\frac{\bar\theta}{2\bar\eta}\ \le\ \frac{2}{T^{2}},
\qquad\text{equivalently}\qquad
\sqrt{\gamma}\,T\ \le\ \sqrt{2},
\]
which in turn guarantees the global second-order inequality
\eqref{inequality:localminimumCalculusofVariations_ineq}. By contrast,
Proposition~\ref{proposition:CT_is_well_defined} only requires
\[
\gamma\ <\ \frac{\pi^{2}}{T^{2}},
\]
which is weaker by a factor $\pi^{2}/2$ and ensures weak well-posedness.
The two conditions are therefore complementary: the former is tailored to
sampling-robust strong well-posedness, whereas the latter is a purely continuous-time
criterion for weak well-posedness. Tightening the gap between these regimes and
deriving sharper criteria for strong well-posedness are natural directions for future work.

\subsection{Positivity, monotonicity, and price manipulation}
\label{subsection:Qualitative_insights}

We now investigate the qualitative aspects of the solution, in particular its positivity and monotonicity. We aim to identify necessary and/or sufficient conditions on $\theta$ and $\eta$ to prevent price manipulation and/or transaction-triggered price manipulation.

\begin{proposition}
\label{proposition:boundedbelow}
    Let $\bar{\eta}=\min_{0 \le t \le T} \{ \eta_{t} \}$, and let
    $\bar{\theta}=\max_{0 \le t \le T}\max\{-\dot \theta_{t},0\}$. Define
    $\gamma=\frac{\bar{\theta}}{2\bar{\eta}}$. Assuming the Optimal Execution
    Problem has a unique solution, if $\sqrt{\gamma}T < \pi$, then the optimal
    solution $\zeta$ is non-negative.
\end{proposition}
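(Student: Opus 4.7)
The plan is to apply the Sturm-Picone comparison lemma (Lemma~\ref{lemma:SturmPicone}) to any hypothetical nodal interval of $\zeta$, mirroring the argument used for Proposition~\ref{proposition:CT is well defined}. I focus on the liquidation case $Q>0$: if $Q=0$, uniqueness forces $\zeta\equiv 0$ and the claim is immediate.

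The first step is to put the Euler--Lagrange equation~\eqref{eq:extremal point BVP} in self-adjoint form. Since $(2\eta_t\dot\zeta_t)'=2\eta_t\ddot\zeta_t+2\dot\eta_t\dot\zeta_t$, equation~\eqref{eq:extremal point BVP} reads
\[
(2\eta_t\dot\zeta_t)' + (-\dot\theta_t)\zeta_t = 0,
\]
a Sturm--Liouville equation with coefficients $p_1(t):=2\eta_t$ and $q_1(t):=-\dot\theta_t$. For the comparison equation I take the constant-coefficient oscillator $\ddot v_t+\gamma v_t=0$, equivalently $(2\bar\eta\,\dot v_t)'+\bar\theta\,v_t=0$, with $p_2:=2\bar\eta$ and $q_2:=\bar\theta$. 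By the very definitions of $\bar\eta$ and $\bar\theta$ in the statement, one has $p_1\ge p_2$ and $q_2\ge q_1$ pointwise on $[0,T]$, which are precisely the Sturm--Picone hypotheses.

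Next, suppose for contradiction that $\zeta$ takes a negative value somewhere on $[0,T]$. Since $\zeta_0=Q>0$, $\zeta_T=0$ and $\zeta\in\mathcal{C}^2$, one can select $0<a<b\le T$ with $\zeta_a=\zeta_b=0$ and $\zeta<0$ on $(a,b)$, by letting $a$ be the first zero of $\zeta$ after $0$ and $b$ the next zero (which exists because $\zeta_T=0$). Any such zero is simple, for otherwise $\zeta(t_0)=\dot\zeta(t_0)=0$ together with uniqueness for the linear second-order ODE would force $\zeta\equiv 0$, contradicting $Q>0$. Hence $a,b$ are genuine consecutive zeros of $\zeta$, and Sturm--Picone gives that every non-trivial solution $v$ of $\ddot v+\gamma v=0$ must vanish in $(a,b)$, unless $v$ is proportional to $\zeta$ on $[a,b]$ with $p_1\equiv p_2$ and $q_1\equiv q_2$ there.

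To reach the contradiction I pick
\[
v(t)=\cos\!\Bigl(\sqrt{\gamma}\bigl(t-\tfrac{a+b}{2}\bigr)\Bigr),
\]
whose zeros closest to the midpoint $(a+b)/2$ lie at $(a+b)/2\pm\pi/(2\sqrt{\gamma})$. Because $\sqrt{\gamma}\,T<\pi$, we have $b-a\le T<\pi/\sqrt{\gamma}$, hence $\pi/(2\sqrt{\gamma})>(b-a)/2$, so both nearest zeros of $v$ lie strictly outside $[a,b]$. This $v$ therefore has no zero in $(a,b)$, contradicting Sturm--Picone, and the degenerate proportionality alternative is excluded since $v(a),v(b)\ne 0$. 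The main obstacle is conceptual rather than technical: identifying the correct self-adjoint recasting so that the ordering inequalities between the two equations drop out of the definitions of $\bar\eta$ and $\bar\theta$. Once this is in place, the rest is the standard half-period argument for the harmonic oscillator, which explains the threshold $\sqrt{\gamma}\,T<\pi$ and parallels the analogous reasoning in Proposition~\ref{proposition:CT is well defined}.
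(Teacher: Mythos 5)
Your proof is correct and follows essentially the same route as the paper: both recast the Euler--Lagrange equation in self-adjoint form, compare it via the Sturm--Picone lemma to the constant-coefficient oscillator $(2\bar\eta\,\dot v)'+\bar\theta\,v=0$, and use $\sqrt{\gamma}\,T<\pi$ to exhibit a comparison solution with no interior zero, which forbids $\zeta$ from having two successive roots in $[0,T]$ and hence from going negative. Your localization to a nodal interval with a midpoint-centred cosine, and your explicit handling of simple zeros and of the proportionality alternative, are only cosmetic refinements of the paper's argument (which uses $\sin[\sqrt{\gamma}(T-t)]$ as the non-vanishing comparison solution on all of $(0,T)$).
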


%\begin{remark}
    Notice that under the conditions of Proposition \ref{proposition:boundedbelow}, the inventory associated with the optimal strategy becomes zero only once at time $T$, or if there exists $t_{0}$ such that $\zeta_{t_{0}}=0$, then $\zeta_{t}=0$ for all $t\in[t_{0},T]$.
%\end{remark}

We now state several propositions regarding the properties of the solution in different cases of increasing or decreasing temporary and permanent impact. Table \ref{tab:impactAnalysis} summarizes the content of these propositions.

\begin{figure}
        \centering
        \includegraphics[scale=.6]{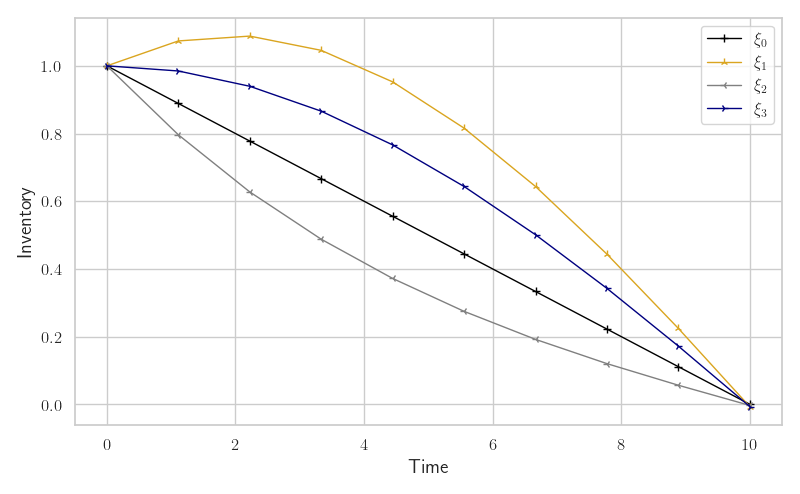}
        \caption{Optimal execution strategies. We consider only exponentially varying impacts such that $\theta_t = \beta_{\theta}e^{\alpha_{\theta}t}$ and $\eta_t = \beta_{\eta}e^{-\alpha_{\eta}t}$. $\pmb{\xi_0}$ is the TWAP strategy. $\pmb{\xi_1}$: is the optimal strategy with $\alpha_{\theta} = -1$, and $\alpha_{\eta} = -0.2$.  $\pmb{\xi_2}$: is the optimal strategy with $\alpha_{\theta} = 0.6$, and $\alpha_{\eta} = 0.2$. We have considered in every case $\frac{\beta_{\theta}}{\beta_{\eta}}=0.1$. $\pmb{\xi_3}$: is the optimal strategy with $\alpha_{\theta} = -0.5$,  and $\alpha_{\eta} = -0.2$.}
        \label{fig:possibletrajectories}
\end{figure}

\begin{proposition}
\label{proposition:IncreasingImpactsCT}
    If both permanent and temporary impact are increasing, the optimal solution is monotonic, bounded from above by the TWAP strategy and convex. Consequently, no transaction-triggered price manipulation or price manipulation is possible.
\end{proposition}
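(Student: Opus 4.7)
The plan is to derive all four assertions (monotonicity, TWAP upper bound, convexity, and absence of any form of price manipulation) from a single structural observation: in divergence form, the Euler--Lagrange equation \eqref{eq:extremal point BVP} reads
\begin{equation*}
    \frac{d}{dt}\bigl(2\eta_t \dot\zeta_t\bigr) \;=\; \dot\theta_t\,\zeta_t,
\end{equation*}
so $t\mapsto \eta_t\dot\zeta_t$ is a monotone function whose sign is controlled by the sign of $\zeta_t$. First I would invoke Proposition~\ref{proposition:boundedbelow} to obtain $\zeta_t\ge 0$ on $[0,T]$: since $\dot\theta_t\ge 0$ by hypothesis, $-\dot\theta_t\le 0$ and one may choose $\bar\theta>0$ arbitrarily small, making $\sqrt{\gamma}T<\pi$ trivially satisfied. (Existence/uniqueness of the optimal solution is assumed either by hypothesis or by additionally invoking Proposition~\ref{proposition:ExistenceandUniquenessCT}.)

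With non-negativity in hand, the divergence form together with $\dot\theta_t\ge 0$ gives $(\eta_t\dot\zeta_t)'\ge 0$, so $\eta_t\dot\zeta_t$ is non-decreasing. The boundary conditions now do the key work: since $\zeta_T=0$ is the minimum of the non-negative function $\zeta$ on $[0,T]$, one has $\dot\zeta_T\le 0$, hence $\eta_T\dot\zeta_T\le 0$. Monotonicity of $\eta_t\dot\zeta_t$ then yields $\eta_t\dot\zeta_t\le 0$ throughout $[0,T]$, and dividing by $\eta_t>0$ proves $\dot\zeta_t\le 0$ on $[0,T]$. This establishes monotonicity of the inventory.

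Convexity then follows by substituting back into the Euler--Lagrange equation rewritten as
\begin{equation*}
    \ddot\zeta_t \;=\; -\frac{\dot\eta_t}{\eta_t}\dot\zeta_t + \frac{\dot\theta_t}{2\eta_t}\zeta_t,
\end{equation*}
where both terms on the right are non-negative because $\dot\eta_t\ge 0$, $\dot\zeta_t\le 0$, $\dot\theta_t\ge 0$ and $\zeta_t\ge 0$; hence $\ddot\zeta_t\ge 0$. The TWAP upper bound $\zeta_t\le Q(1-t/T)$ is then immediate from the chord property of convex functions applied with $\zeta_0=Q$ and $\zeta_T=0$. Finally, since $\dot\zeta_t\le 0$ everywhere while $Q>0$, the optimal trading rate never changes sign and hence, by Definition~\ref{definition:TTPMcont}, no transaction-triggered price manipulation occurs; Theorem~\ref{theorem:NoPTPimpliesNOPM} then rules out price manipulation in the sense of negative transaction costs.

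The main subtlety I anticipate is the monotonicity step: a naive sign analysis of the non-autonomous second-order ODE is inconclusive because the signs of the two terms in $\ddot\zeta_t$ are not known a priori. Noticing that the equation is already in exact divergence form is therefore the crucial simplification; it converts a second-order problem into a first-order integral inequality where positivity of $\zeta$, together with the single boundary value $\zeta_T=0$, is enough to pin down the sign of $\dot\zeta$. Once monotonicity is in place, all remaining conclusions are essentially algebraic.
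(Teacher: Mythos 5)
Your proposal is correct, and it reaches the same conclusions from the same starting point as the paper (non-negativity of $\zeta$ via Proposition~\ref{proposition:boundedbelow}, with $\sqrt{\gamma}T<\pi$ holding trivially because $\bar\theta$ can be taken arbitrarily small when $\dot\theta_t\ge 0$; your handling of this is actually more careful than the paper's shorthand ``$\gamma=0$''). Where you diverge is the order and mechanism of the sign analysis. The paper proves \emph{convexity first}, by contradiction: it assumes $\ddot\zeta_t<0$ on some interval, asserts the existence of a point $t_c$ there with $\dot\zeta_{t_c}<0$, and derives $\ddot\zeta_{t_c}\ge 0$ from the Euler--Lagrange equation; monotonicity and the TWAP bound then follow from convexity plus the boundary data. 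You instead prove \emph{monotonicity first}, by observing that the Euler--Lagrange equation is in exact divergence form, so $\eta_t\dot\zeta_t$ is non-decreasing and, being $\le 0$ at $t=T$ (since $\zeta_T=0$ is the minimum of the non-negative inventory), is $\le 0$ throughout; convexity then drops out by substituting the established signs back into the second-order form. Your route buys something concrete: the paper's step ``there exists $t_c\in(t_a,t_b)$ with $\dot\zeta_{t_c}<0$'' does not follow from local strict concavity alone (the derivative could be decreasing while remaining positive on that interval) and would need additional patching, whereas your first-integral argument pins down the sign of $\dot\zeta$ globally with no case analysis. The trade-off is that the paper's contradiction template is reused almost verbatim in Propositions~\ref{proposition:ConstantTemporaryDecreasingPermanentCT}--\ref{proposition:Temporary_increase_Permanent_decrease}, where the divergence-form trick is less decisive because $\dot\theta_t\zeta_t$ changes sign.
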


%\begin{remark}
    Proposition \ref{proposition:IncreasingImpactsCT} aligns with Example \ref{secton3:CT}.\ref{example:tempconstantandincreasingpermanent}, where we considered constant temporary and linearly increasing permanent impact. In that example, the optimal solution was a strictly decreasing, consistently staying below the TWAP strategy.
%\end{remark}

\begin{proposition}
\label{proposition:ConstantTemporaryDecreasingPermanentCT}
    If temporary impact is constant, permanent impact is decreasing, and $\sqrt{\gamma}T<\pi$, then the optimal solution is non-negative. It exhibits at most two phases of execution. The optimal solution can be either monotonic, thereby ruling out transaction-triggered price manipulation, or its velocity can change sign exactly once during execution. The optimal solution is always concave, and bounded from below by TWAP. 
\end{proposition}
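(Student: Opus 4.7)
My plan is to exploit the fact that when $\eta_t\equiv\eta$ is constant the extremal equation \eqref{eq:extremal point BVP} collapses to
\begin{equation*}
\ddot\zeta_t \;=\; \frac{\dot\theta_t}{2\eta}\,\zeta_t,
\end{equation*}
a linear second-order ODE whose right-hand side becomes sign-definite as soon as $\zeta$ is known to be non-negative. All five assertions then follow from a short qualitative analysis of this simplified equation, bootstrapped off Proposition \ref{proposition:boundedbelow}.

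First, I would invoke Proposition \ref{proposition:boundedbelow} directly: the hypothesis $\sqrt{\gamma}T<\pi$ is the same, so the optimal inventory is non-negative on $[0,T]$. Second, because $\theta$ is decreasing we have $\dot\theta_t\le 0$, and combined with $\zeta_t\ge 0$ the displayed ODE gives $\ddot\zeta_t\le 0$; hence $\zeta$ is concave. Third, the TWAP lower bound is then immediate: a concave function on $[0,T]$ with values $Q$ at $0$ and $0$ at $T$ lies above the chord $t\mapsto Q(1-t/T)$, which is precisely the TWAP inventory. Fourth, concavity means $\dot\zeta$ is non-increasing, so its sign switches at most once. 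If $\dot\zeta_0\le 0$, then $\dot\zeta_t\le 0$ throughout and the inventory is monotone, giving a single execution phase and ruling out transaction-triggered price manipulation by Definition \ref{definition:TTPMcont}; otherwise $\dot\zeta_0>0$ and, since $\zeta_T=0<Q=\zeta_0$ forbids $\dot\zeta$ from remaining non-negative on $[0,T]$, $\dot\zeta$ must change sign exactly once, yielding the two-phase buy-then-sell pattern.

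The hard part is essentially outsourced: the non-negativity step, which requires the Sturm-Picone machinery, is already encapsulated in Proposition \ref{proposition:boundedbelow}, and once it is available each remaining claim becomes an elementary consequence of concavity. The only subtlety I anticipate is verifying that when $\dot\zeta_0>0$ the sign change is strict and interior to $[0,T]$; this should follow from the fact that if $\dot\zeta$ vanished on a full subinterval, the ODE would force $\zeta$ to be constant there, and uniqueness of the boundary value problem (via Proposition \ref{proposition:ExistenceandUniquenessCT}) would propagate this to a contradiction with $\zeta_0=Q>0$ and $\zeta_T=0$. Thus exactly one interior zero of $\dot\zeta$ separates the two phases, completing the classification.
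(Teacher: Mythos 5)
Your proposal is correct and follows essentially the same route as the paper: reduce the Euler--Lagrange equation to $\ddot\zeta_t=\frac{\dot\theta_t}{2\eta}\zeta_t$, import non-negativity from Proposition \ref{proposition:boundedbelow}, deduce concavity from $\dot\theta_t\le 0$ and $\zeta_t\ge 0$, and then read off the TWAP lower bound. In fact your write-up is slightly more complete than the paper's, which argues concavity by contradiction but leaves the ``at most two phases / exactly one sign change of the velocity'' claim implicit, whereas you derive it explicitly from the monotonicity of $\dot\zeta$ together with the boundary values.
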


%\begin{remark}
    Proposition \ref{proposition:ConstantTemporaryDecreasingPermanentCT} establishes a necessary and sufficient condition for a model with constant temporary impact and decreasing permanent impact. Specifically, if the initial velocity of a sell program is negative, then the optimal solution is monotonic, and no transaction-triggered price manipulation is possible. This condition, in turn, implies the absence of price manipulation. 
%\end{remark}

\begin{proposition}
\label{proposition:Temporary_decrease_Permanent_increase}
Assume permanent impact is strictly increasing and temporary impact is strictly
decreasing. Assume moreover that the Optimal Execution Problem has a unique
solution. Then the optimal solution is monotone (it involves only sell trades),
so there is no transaction-triggered price manipulation, and hence no price
manipulation.
\end{proposition}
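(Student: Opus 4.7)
The plan is to reduce the maximization \eqref{problem:OTContinuousTime} to the Euler--Lagrange boundary value problem \eqref{eq:extremal point BVP} and then verify, in order, (a) existence and uniqueness of a $C^{2}$ extremal, (b) that it is a global maximum of $\mathcal{C}$, (c) non-negativity of the optimal inventory $\zeta$, and (d) its monotonicity in $t$. Once (d) is in hand, the strategy never trades in the direction opposite to $-\mathrm{sign}(Q)$, so Definition \ref{definition:TTPMcont} rules out transaction-triggered price manipulation, and Theorem \ref{theorem:NoPTPimpliesNOPM} in turn rules out price manipulation.

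For (a) and (b) I would work directly from the second variation. Multiplying \eqref{eq:extremal point BVP} by a perturbation $\phi \in C^{2}([0,T])$ vanishing at the endpoints and integrating by parts, the boundary terms drop out and one obtains
\begin{equation*}
    \int_{0}^{T}\bigl(\phi \dot\phi\,\theta_{t} - \dot\phi^{2}\eta_{t}\bigr)\,dt \;=\; -\tfrac{1}{2}\int_{0}^{T}\phi^{2}\,\dot\theta_{t}\,dt \;-\; \int_{0}^{T}\dot\phi^{2}\,\eta_{t}\,dt.
\end{equation*}
Under the hypotheses $\dot\theta_{t}\ge 0$ and $\eta_{t}>0$ the right-hand side is strictly negative for every non-trivial $\phi$, so the quadratic form governing the second variation of $\mathcal{C}$ is negative-definite on admissible perturbations; equivalently, the homogeneous BVP (with $\psi(0)=\psi(T)=0$) admits only the trivial solution. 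Standard linear-BVP theory then yields a unique $C^{2}$ solution of the inhomogeneous problem \eqref{eq:extremal point BVP}, and strict concavity of $\mathcal{C}$ on the affine set of admissible inventories upgrades this unique critical point to the unique global maximum.

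For (c) I would invoke Proposition \ref{proposition:boundedbelow}: since $-\dot\theta_{t}\le 0$ everywhere, the constant $\bar{\theta}>0$ appearing in that proposition may be taken arbitrarily small, so $\gamma=\bar\theta/(2\bar\eta)$ makes $\sqrt{\gamma}T<\pi$ trivially satisfied, yielding $\zeta_{t}\ge 0$ on $[0,T]$. For (d), I would rewrite the Euler--Lagrange equation in conservation form
\begin{equation*}
    \frac{d}{dt}\bigl(\eta_{t}\dot\zeta_{t}\bigr) \;=\; \tfrac{1}{2}\dot\theta_{t}\,\zeta_{t}\;\ge\;0,
\end{equation*}
so $t \mapsto \eta_{t}\dot\zeta_{t}$ is non-decreasing on $[0,T]$. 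The terminal condition $\zeta_{T}=0$ combined with $\zeta\ge 0$ rules out $\dot\zeta_{T}>0$ (otherwise $\zeta$ would dip below zero just before $T$), hence $\eta_{T}\dot\zeta_{T}\le 0$, and the monotonicity just established then gives $\eta_{t}\dot\zeta_{t}\le 0$, hence $\dot\zeta_{t}\le 0$, throughout $[0,T]$. The symmetric case $Q<0$ follows from the linear symmetry $\zeta\mapsto -\zeta$ of the BVP.

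The main subtlety I would flag is that the hypothesis $\dot\eta_{t}\le 0$ never explicitly enters the argument: every step uses only $\dot\theta_{t}\ge 0$, $\eta_{t}>0$, and the sign of $\zeta$. I regard this as a feature rather than a defect, since decreasing temporary impact is compatible with, but not required for, strong well-posedness in this regime, in marked contrast to Proposition \ref{proposition:ConstantTemporaryDecreasingPermanentCT} where the borderline condition $\sqrt{\gamma}T<\pi$ genuinely bites.
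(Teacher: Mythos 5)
Your proof is correct, but it reaches the decisive monotonicity conclusion by a genuinely different route than the paper. The paper argues by contradiction on the shape of the solution: if $\dot\zeta_{t_0}>0$ somewhere while $\zeta$ is positive, the terminal condition forces an interior local maximum where $\zeta$ is positive and locally concave, yet on that region the Euler--Lagrange equation $\ddot\zeta_t=-\tfrac{\dot\eta_t}{\eta_t}\dot\zeta_t+\tfrac{\dot\theta_t}{2\eta_t}\zeta_t$ together with $\dot\eta_t\le 0$, $\dot\theta_t\ge 0$ and $\zeta_t\ge 0$ yields $\ddot\zeta_t\ge 0$, a contradiction. You instead integrate the equation once, noting that $\tfrac{d}{dt}\bigl(\eta_t\dot\zeta_t\bigr)=\tfrac12\dot\theta_t\zeta_t\ge 0$, so that $\eta_t\dot\zeta_t$ is non-decreasing while its terminal value is forced to be $\le 0$ by $\zeta_T=0$ and $\zeta\ge 0$. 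This is shorter, avoids the case analysis around the putative local maximum, and---as you correctly flag---never invokes $\dot\eta_t\le 0$, so it in fact establishes the stronger statement that increasing permanent impact alone (with $\eta_t>0$) yields a monotone optimal inventory, which is consistent with Proposition \ref{proposition:IncreasingImpactsCT}. You are also more careful than the paper on existence, uniqueness and the second-order condition: the paper's proof essentially asserts that the problem is a maximization problem, whereas your integration-by-parts identity shows the second variation is negative definite whenever $\dot\theta_t\ge 0$, which simultaneously rules out nontrivial solutions of the homogeneous Jacobi boundary value problem (so the Fredholm alternative gives a unique extremal) and upgrades that extremal to the unique global maximum of the strictly concave quadratic functional. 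The non-negativity step, obtained by letting $\bar\theta\downarrow 0$ so that $\sqrt{\gamma}T<\pi$ holds trivially in Proposition \ref{proposition:boundedbelow}, coincides with the paper's.
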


\begin{proposition}
\label{proposition:DecreasingImpacts}
    Suppose that both permanent and temporary impact are decreasing. Given the assumptions of Proposition \ref{proposition:boundedbelow}, if a solution exists for the Optimal Execution Problem, it cannot be locally convex. Additionally, the inventory is consistently non-negative. Furthermore, in cases where the optimal solution allows for transaction-triggered price manipulation, the buy phase must be concave. The TWAP strategy bounds the optimal solution from below. 
\end{proposition}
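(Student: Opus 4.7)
The proposition bundles four claims: (i) non-negativity of $\zeta$, (ii) absence of local convexity, (iii) concavity of the buy phase whenever transaction-triggered price manipulation (TTPM) occurs, and (iv) TWAP as a pointwise lower bound. My plan is to rewrite the Euler-Lagrange equation \eqref{eq:extremal point BVP} as
\begin{equation*}
2\eta_{t}\ddot\zeta_{t}=\dot\theta_{t}\zeta_{t}-2\dot\eta_{t}\dot\zeta_{t},
\end{equation*}
keep in mind that $\dot\theta_{t}\le 0$ and $\dot\eta_{t}\le 0$ by hypothesis, and exploit these signs systematically. Claim (i) is immediate from Proposition \ref{proposition:boundedbelow}, since the assumption $\sqrt{\gamma}T<\pi$ is inherited.

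For the sell-phase portion of (ii), on any interval where $\dot\zeta_{t}<0$ both summands on the right-hand side are non-positive: $\dot\theta_{t}\zeta_{t}\le 0$ by (i), and $-2\dot\eta_{t}\dot\zeta_{t}\le 0$ because its two factors are non-positive. Hence $\ddot\zeta_{t}\le 0$ on every sell phase. For buy phases, I would introduce the momentum $p_{t}:=2\eta_{t}\dot\zeta_{t}$; the Euler-Lagrange equation gives $\dot p_{t}=\dot\theta_{t}\zeta_{t}\le 0$, so $p$ is non-increasing and a buy phase must occupy a single initial interval $[0,t^{\ast}]$ with $p(t^{\ast})=0$. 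At $t^{\ast}$ itself, $\dot\zeta(t^{\ast})=0$ reduces the EL to $2\eta(t^{\ast})\ddot\zeta(t^{\ast})=\dot\theta(t^{\ast})\zeta(t^{\ast})\le 0$, giving concavity at the endpoint of the buy phase. To extend concavity to the interior, I would employ the Liouville substitution $u_{t}:=\zeta_{t}\sqrt{\eta_{t}}$, which converts the EL into a Schr\"odinger-type equation $\ddot u_{t}+V_{t}u_{t}=0$ with
\begin{equation*}
V_{t}=\frac{\dot\eta_{t}^{2}}{4\eta_{t}^{2}}-\frac{\ddot\eta_{t}}{2\eta_{t}}-\frac{\dot\theta_{t}}{2\eta_{t}},
\end{equation*}
and then apply a Sturm comparison with the nodal data $u(t^{\ast})>0$, $\dot u(t^{\ast})\le 0$. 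The sell- and buy-phase arguments together preclude any convex sub-arc, yielding (ii) and (iii).

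For (iv), I set $\phi_{t}:=\zeta_{t}-Q(1-t/T)$, which vanishes at $t=0,T$ and, upon substitution into the Euler-Lagrange equation, satisfies the Sturm-Liouville identity
\begin{equation*}
M\phi_{t}:=-(2\eta_{t}\dot\phi_{t})'+\dot\theta_{t}\phi_{t}=-\dot\theta_{t}Q(1-t/T)-2\dot\eta_{t}\frac{Q}{T}\ge 0.
\end{equation*}
The Rayleigh quotient combined with Poincar\'e's inequality on $H^{1}_{0}([0,T])$ gives $\lambda_{1}(M)\ge 2\bar{\eta}\left[(\pi/T)^{2}-\gamma\right]>0$ under $\sqrt{\gamma}T<\pi$, so $M$ is positive-definite and its Dirichlet Green's function is non-negative. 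Pairing $M\phi\ge 0$ against this Green's function then yields $\phi\ge 0$, i.e., the TWAP lower bound.

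The main obstacle is (iii). Direct sign analysis of the Euler-Lagrange equation does not force $\ddot\zeta_{t}\le 0$ on a buy phase, because the term $-2\dot\eta_{t}\dot\zeta_{t}\ge 0$ there and competes with the negative contribution $\dot\theta_{t}\zeta_{t}$. The Schr\"odinger reduction shifts the burden to controlling the sign of the potential $V_{t}$, which in turn depends on $\ddot\eta_{t}$, unconstrained by the standing hypotheses. A clean proof likely requires either an auxiliary regularity condition on $\eta$ ensuring $V_{t}\ge 0$, or a tighter Sturm-type comparison that exploits the boundary datum $\zeta(T)=0$ together with the monotonicity of $p$ to rule out interior inflection points on $[0,t^{\ast}]$.
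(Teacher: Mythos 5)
Your treatment of non-negativity, of concavity on sell phases, and of the ``at most one turning point, buy then sell'' structure is correct, and two of your devices are genuinely different from the paper's. The paper rules out a velocity sign change from negative to positive by a pointwise contradiction (on any stretch where $\dot\zeta_t<0$ and $\zeta_t\ge 0$ the Euler--Lagrange equation forces $\ddot\zeta_t\le 0$, so the velocity cannot climb back to zero); your momentum argument $\dot p_t=\dot\theta_t\zeta_t\le 0$ with $p_t=2\eta_t\dot\zeta_t$ delivers the same conclusion in one line and is cleaner. For the TWAP bound the paper argues via concavity of the decreasing arc (a concave curve through $(0,Q)$ and $(T,0)$ lies above its chord); your route through $M\phi\ge 0$, the Rayleigh--Poincar\'e estimate $\lambda_1(M)\ge 2\bar\eta\bigl[(\pi/T)^2-\gamma\bigr]>0$, and positivity of the Dirichlet Green's function is heavier machinery but has the advantage of not presupposing any concavity of the trajectory, so it covers the buy phase and the sell phase uniformly. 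Both computations ($M\phi$ and the eigenvalue bound) check out.

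The one claim you do not close is concavity of the buy phase, and you are right that direct sign analysis fails there: on $\{\dot\zeta_t>0\}$ the term $-2\dot\eta_t\dot\zeta_t$ is non-negative and competes with $\dot\theta_t\zeta_t\le 0$, and your Liouville substitution only trades this for a sign condition on a potential involving $\ddot\eta_t$, which the hypotheses do not control. You should know that the paper's own proof does not resolve this either: its contradiction argument only excludes the pattern ``$\dot\zeta<0$ then $\dot\zeta>0$'', which constrains nothing about $\ddot\zeta_t$ on an interval where $\dot\zeta_t$ is already positive, and the buy-phase concavity assertion is essentially stated without argument (``an initial positive velocity is possible by concavity arguments''). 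So your identification of the obstacle is accurate and points at a genuine looseness in the published proof rather than at a missing idea you failed to find; everything else in the proposition you prove correctly, and in the case of the TWAP bound, more robustly than the paper does.
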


%\begin{remark}
    It is worth noticing that Proposition \ref{proposition:DecreasingImpacts} states that the optimal strategy exhibits two possible patterns. On the one hand, it can be monotonically decreasing and concave. On the other hand, the trading profile can involve two execution phases: a first one buying with a concave strategy, then a second one selling in a concave manner. Throughout these scenarios, there is always a non-negative inventory.
%\end{remark}

\begin{proposition}
\label{proposition:Temporary_increase_Permanent_decrease}
    Let permanent impact be decreasing, and temporary impact increasing. Choose $\sqrt{\gamma}T<\pi$ and assume that there is a unique solution to Eq. \eqref{eq:extremal_point_BVP}. The Optimal Execution Problem has a unique solution, that is non-negative and can be either decreasing, or have one turning point.
\end{proposition}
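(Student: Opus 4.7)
The plan is to deduce the uniqueness and non-negativity statements directly from Propositions \ref{proposition:CT is well defined} and \ref{proposition:boundedbelow}, and then extract the turning-point structure by a sign analysis of the Euler--Lagrange equation at critical points of $\zeta$. Indeed, our hypotheses ($\sqrt{\gamma}T<\pi$ and uniqueness of the BVP solution) are exactly those of Proposition \ref{proposition:CT is well defined}, which gives that the extremal of $\mathcal{C}$ is a maximum. Since it is the unique extremal, it is the unique global maximizer of $\mathcal{C}$ over $\Xi$, hence the unique solution of the Optimal Execution Problem. Non-negativity, $\zeta_t\ge 0$ on $[0,T]$, follows directly from Proposition \ref{proposition:boundedbelow} under the same hypotheses.

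For the turning-point structure I would exploit the Euler--Lagrange equation \eqref{eq:extremal point BVP} at interior critical points. If $t^*\in(0,T)$ satisfies $\dot\zeta_{t^*}=0$, the middle term of the ODE vanishes and we obtain
\begin{equation*}
2\eta_{t^*}\ddot\zeta_{t^*}=\dot\theta_{t^*}\,\zeta_{t^*}.
\end{equation*}
Since $\eta_{t^*}>0$, $\dot\theta_{t^*}<0$ by assumption, and $\zeta_{t^*}\ge 0$ from the previous step, we conclude $\ddot\zeta_{t^*}\le 0$, with strict inequality whenever $\zeta_{t^*}>0$. Hence every interior critical point of $\zeta$ with positive inventory is a strict local maximum; no interior local minimum with $\zeta>0$ can occur.

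This is the key structural constraint. If $\zeta$ had two distinct interior turning points $t_1<t_2$ in $(0,T)$, then between them $\dot\zeta$ would necessarily vanish at a further intermediate point that is a local minimum of $\zeta$ (since a smooth function cannot possess two consecutive strict local maxima without an intervening minimum). If that minimum has $\zeta>0$ it contradicts the strict maximum property just established; if it has $\zeta=0$, the remark following Proposition \ref{proposition:boundedbelow} forces $\zeta\equiv 0$ on $[t_{\min},T]$, contradicting the existence of a later turning point with $\dot\zeta=0$ emerging from a flat zero segment in a non-trivial way. Thus $\zeta$ admits at most one interior turning point.

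It then remains to distinguish cases: if there is no interior turning point, $\dot\zeta$ has constant sign on $(0,T)$, and because $\zeta_0=Q>0$, $\zeta_T=0$, this sign must be negative, so $\zeta$ is strictly decreasing. If there is exactly one interior turning point $t^*$, it is a local maximum of $\zeta$, so $\zeta$ rises from $Q$ on $[0,t^*]$ (initial buying) and then decreases to $0$ on $[t^*,T]$ (selling), which is the single turning point scenario. The main subtlety I anticipate lies in the borderline case $\zeta_{t^*}=0$, which requires invoking the remark after Proposition \ref{proposition:boundedbelow} carefully to exclude re-emergence of the inventory; the rest of the argument is a direct application of the ODE together with the sign hypotheses on $\dot\theta$ and $\eta$.
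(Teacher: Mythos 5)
Your proof is correct, and it is worth noting that the paper itself provides no proof of this proposition in the appendix: the proofs jump from Proposition \ref{proposition:Temporary_decrease_Permanent_increase} directly to Proposition \ref{proposition:DecreasingImpacts}, so there is nothing to compare against line by line. Your argument is nevertheless very much in the spirit of the paper's proofs of the sibling propositions (\ref{proposition:IncreasingImpactsCT}, \ref{proposition:ConstantTemporaryDecreasingPermanentCT}, \ref{proposition:DecreasingImpacts}), which all proceed by reading off the sign of $\ddot\zeta$ from the Euler--Lagrange equation and ruling out a local convexity/concavity pattern by contradiction. Your specific device --- evaluating the equation only at critical points so that the term $2\dot\eta_t\dot\zeta_t$ drops out --- is exactly the right adaptation here, because with $\dot\eta\ge 0$ and $\dot\theta\le 0$ the two forcing terms have opposite signs away from critical points and no global convexity or concavity statement is available (consistent with the empty ``tp'' entry in Table \ref{tab:impactAnalysis} for this regime). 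Two small caveats, both at the level of rigor the paper itself tolerates: the ``strict local maximum'' conclusion needs $\dot\theta_{t^*}<0$ strictly, whereas ``decreasing'' in the paper sometimes means only $\dot\theta\le 0$ (if $\dot\theta$ vanishes on a set containing the critical point, the second-derivative test degenerates and one must argue via the sign change of $\dot\zeta$ directly, which still works); and your treatment of the borderline $\zeta_{t_{\min}}=0$ leans on the unproved remark following Proposition \ref{proposition:boundedbelow}, which is acceptable given that the paper states that remark as established. Overall your write-up fills a gap the paper leaves open, using the paper's own toolkit.
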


%\begin{remark}
    Figure \ref{fig:possibletrajectories} illustrates the four possible types of optimal solutions, \emph{for a sell program with $Q=1$} and exponentially varying impact parameters, that our model can generate when imposing $\sqrt{\gamma}T < \pi$ from Proposition \ref{proposition:CT_is_well_defined}:
    \begin{enumerate}
        \item The black line refers to the TWAP strategy. This strategy has a constant velocity execution equal to $-\frac{1}{T}$. 
        \item The orange line represents a two regime execution strategy. The initial velocity is positive thus one starts the sale by buying, then one sells everything purchased plus the initial quantity to sell. This strategy has transaction-triggered price manipulation, and the inventory is concave, because $\ddot \zeta_{t} \le 0$ for all $t\in[0,T]$. 
        \item The grey line describes an optimal solution that has negative initial velocity. Note also that this solution is concave and thus $\ddot \zeta_{t} \le 0$ for all $t\in[0,T]$. This strategy is monotonic, but its concavity makes it always bounded from below by the TWAP strategy. 
        \item Finally the blue line represents a convex inventory strategy. The initial velocity is negative, and the strategy is always bounded from above by the TWAP strategy. Note that such a strategy is monotonic, and thus there is no transaction-triggered price manipulation. 
    \end{enumerate}
%\end{remark}

In conclusion, Table \ref{tab:impactAnalysis} displays the different patterns that can be observed depending on the monotonicity properties of permanent and temporary impact parameter.

%[fontsize=\small]
\begin{table}[ht]
\centering
\begin{tabular}{|l|c|c|c|}
\hline
\multirow{2}{*}{{\textbf{Temporary Impact}}} & \multicolumn{3}{c|}{\textbf{Permanent Impact}} \\ \cline{2-4} 
 & \textbf{Increasing} & \textbf{Decreasing} & \textbf{Constant} \\ \hline
\textbf{Increasing} &  cvx, mono &  tp &  cvx, mono \\ \hline
\textbf{Decreasing} &  mono &  ccv, tp & ccve, mono \\  \hline
\textbf{Constant} & cvx, mono &  ccv, tp &  const \\ \hline
\end{tabular}
\caption{Characteristics of the optimal solution in different impact settings. The abbreviations correspond to the following: cvx: convex, ccv: concave, ccve: convex or concave, mono: monotonic but non-linear, const: constant speed, tp: single turning point.}
\label{tab:impactAnalysis}
\end{table}

\section{Numerical examples}
\label{sec:numerical}
We now present some numerical analysis of the optimal execution and its properties when both temporary and permanent impact are time-varying. As discussed previously, the solutions of the Optimal Execution Problem exhibit diverse behavior contingent on the monotonicity of the impact. First, Proposition \ref{proposition:CT_is_well_defined} indicates that the problem is weakly well-posed when $\sqrt{\gamma}T < \pi$. This implies that, in cases where the permanent impact is increasing, the problem consistently takes the form of a maximization problem. On the contrary, if the permanent impact decreases too rapidly with respect to the minimal value of the temporary impact and the final time $T$, the well-posedness condition can be violated. While we have not explicitly demonstrated that $\sqrt{\gamma}T < \pi$ is a necessary condition for a strongly well-posed problem, we have observed that it is imperative in order to ensure the no-transaction-triggered price manipulation property in general. Note that it is sufficient in scenarios involving linear decreasing permanent impact, see Example \ref{secton3:CT}.\ref{example:linearlydecreasingpermanentCT}. 

For a given dynamics of permanent and temporary impact, assuming that the Optimal Execution Problem is well-posed, solving it consists in solving a second order non-autonomous boundary value problem with Dirichlet conditions, see Eq. \eqref{eq:extremal_point_BVP}. While such a problem may not be solvable analytically, numerical methods can be employed. Here we present the results for exponentially decaying impact parameters, while in \ref{app:powerlaw} give similar results for power-law decaying parameters. We employ the shooting method \cite{osborne1969shooting} to solve the boundary value problem. This approach transforms the BVP into a first-order ordinary differential equation (ODE) by guessing an initial trading velocity and iteratively refining it through dichotomy.

%\subsection{Exponentially decreasing impacts}

Consider the decreasing monotonic impact functions
\begin{align}
    \theta_{t} = \beta_{p}\exp\left(-\alpha_{p}\frac{t}{T}\right), && \eta_{t} = \beta_{tp}\exp\left(-\alpha_{tp}\frac{t}{T}\right).
\end{align}
Since $-\dot \theta_{t} \le \alpha_{p}\beta_{p}=\bar \theta$ and $\bar \eta=\beta_{tp}\exp(-\alpha_{tp})$, by Proposition \ref{proposition:CT_is_well_defined} the Optimal Execution Problem is a maximization problem when 
\begin{eqnarray}
    \sqrt{\frac{\alpha_{p}\beta_{p}}{2T\beta_{tp}\exp(-\alpha_{tp})}}T < \pi.
\end{eqnarray}
As before, setting $\beta_{tp}=T\kappa\beta_{p}$ and $T=1$, one can rewrite the inequality as
\begin{eqnarray}
\label{inequality:expo_exponents}
    \alpha_{p} < 2\kappa\pi^{2}\exp(-\alpha_{tp}).
\end{eqnarray}

We consider two cases depending on the equality between $\alpha_{p}$ and  $\alpha_{tp}$. 
\begin{example}[The case of $\alpha_{p}\neq \alpha_{tp}$]
Using the first-order conditions and the boundary conditions, without loss of generality we set $T=1$, and get
\begin{align}
\begin{cases}
    \ddot \zeta_{t} &= \alpha_{tp}\dot \zeta_{t} - \frac{\alpha_{p}}{2\kappa}\exp((\alpha_{tp}-\alpha_{p})t)\zeta_{t}, \\
        \zeta_{0}&=Q,\\
        \zeta_{T}&=0.
    \end{cases}
\end{align}
This boundary value problem has a solution when there exists $\lambda \in (0,1)$ such that 
\begin{align}
\label{conditions:expo_different_exponents}
        \begin{cases}
            \frac{\alpha_{p}}{2\kappa(\alpha_{tp}-\alpha_{p})}\left[1-\exp(\alpha_{tp}-\alpha_{p})\right] &\le \log(1+2\lambda), \\ 
            \alpha_{tp}\le \log(1+2\lambda).
        \end{cases}
\end{align}
\begin{figure}
        \centering
        \includegraphics[scale=0.37]{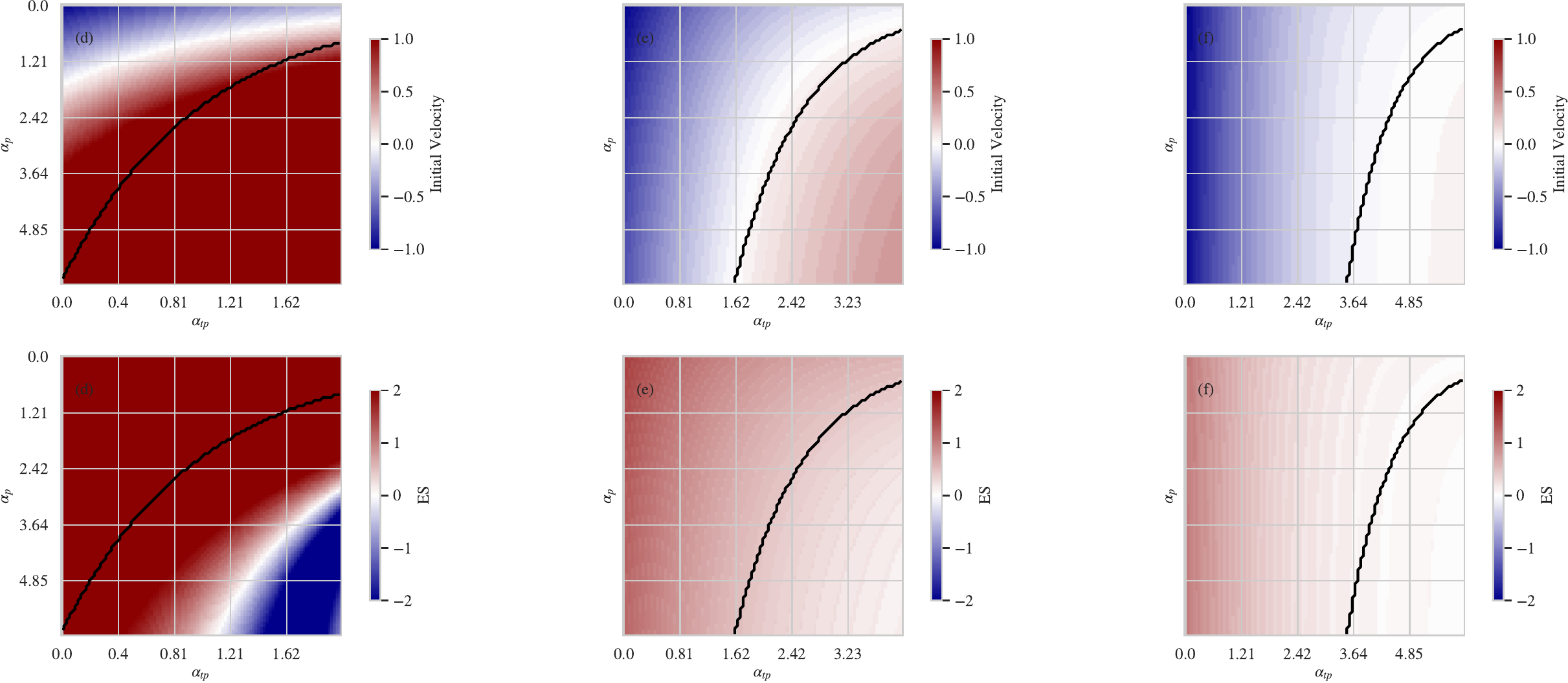}
        \caption{ \textbf{First row:} Comparison of the initial velocities with respect to different market conditions, and admissible regimes. \textbf{Second row:} Expected implementation shortfall associated to the optimal strategies. \textbf{Remark:} Each column is associated to a parameter $\kappa$ (first column with $\kappa=0.1$, second column with $\kappa=1$, third column with $\kappa=10$). The region on the left of the black line is the admissible region defined by Proposition \ref{proposition:CT_is_well_defined}.}
        \label{fig:init_velocities_diff_expo}
\end{figure}

It is not possible to characterize analytically all impact settings that allow transaction-triggered price manipulation and negative transaction costs. We propose instead to explore the parameter space by examining the initial trading velocities associated with optimal execution strategies in various impact scenarios. Specifically, we investigate the cost values as indicators of the impact settings.

The first row of Figure \ref{fig:init_velocities_diff_expo} presents the initial trading velocities as a function of $\alpha_{tp}$ and $\alpha_{p}$, while the second row shows the costs. The first column considers $\kappa=0.1$. In the admissible region, defined by Eq. \eqref{inequality:expo_exponents} and corresponding to the left part delimited by the black line, we observe initial trading velocities greater than $-1$, in agreement with Proposition \ref{proposition:DecreasingImpacts}. The admissible region allows for monotonic concave solutions, represented by the blue region, and non-monotonic concave solutions, associated with transaction-triggered manipulation, represented by the red region. Numerically we find that the transition between the two regions is smooth and is described by a power law relation between $\alpha_{p}$ and $\alpha_{tp}$. %that the maximum value of $\alpha_{p}$ for being in the blue region decreases as a power law with $\alpha_{tp}$ and the transition is . A smooth transition from monotonic concave solutions (blue region) to non-monotonic concave solutions (red region) is noticeable. As $\alpha_{tp}$ increases, the admissible values of $\alpha_{p}$ decrease as exponentially fast, consistent inequality \ref{inequality:expo_exponents}. 
In the admissible region, the costs are positive, indicating positive expected implementation shortfall. This demonstrates that the exponentially decreasing impact model with different exponents does not permit manipulation and is strongly well-posed in the admissible region.

The second and third columns consider $\kappa=1$ and $\kappa=10$, respectively. Focusing on the admissible regions (left side of the black line), we observe that regions allowing transaction-triggered price manipulation (in red) are less prevalent. Interestingly, the red region shifts to larger values of $\alpha_{tp}$ and extends beyond the admissible region. Additionally, the maximal value of $\alpha_{p}$ for being in the blue region decreases exponentially fast with $\alpha_{tp}$, aligning with the definition of $\gamma$ and Proposition \ref{proposition:CT_is_well_defined}. As $\kappa$ increases, the solution is mostly monotonic and concave in the admissible region, thus transaction-triggered price manipulations become rarer.  %The range of concavities for monotonic solutions widens as we increase $\kappa$, consistently with the growing prominence of blue regions in the admissible region with increasing $\kappa$.

Notably, negative transaction costs do not exist for the admissible strategies, as observed for $\kappa=0.1$. This is expected, given that with relatively large values of $\kappa$, the admissible region only allows monotonic solutions, indicating no transaction-triggered price manipulation and, consequently, positive expected implementation shortfall by Proposition \ref{proposition:NoPTPimpliesNOPM} \cite{alfonsi2010optimal}.

In conclusion, the time-varying Almgren-Chriss impact model in continuous time, characterized by exponentially decreasing impacts with different speeds can indeed lead to transaction-triggered price manipulation. However, it does not admit negative expected implementation shortfall. Moreover, as the value of $\kappa$ increases, indicating $\frac{\eta_{0}}{T}\gg\theta_{0}$, the model also eliminates any potential form of transaction-triggered price manipulation.%, ensuring the absence of negative transaction costs.
\end{example}

\begin{example}[The case of $\alpha_{tp}=\alpha_{p}=:\alpha$]
We obtain a linear autonomous and homogeneous second order differential equation, and the solution is given by
\begin{align}
    \zeta_{t} = \frac{Q e^{\sqrt{\alpha^2 -2 \frac{\alpha}{\kappa}}}}{e^{\sqrt{\alpha^2 -2 \frac{\alpha}{\kappa}}}-1} e^{\frac{t}{2} \left(\alpha-\sqrt{\alpha^2-2\frac{\alpha}{\kappa}}\right)}-\frac{Q}{e^{\sqrt{\alpha^2 -2 \frac{\alpha}{\kappa}}}-1} e^{\frac{t}{2} \left(\sqrt{\alpha^2-2\frac{\alpha}{\kappa}}+\alpha\right)}.
\end{align}
A prerequisite to determine the optimal execution strategy is \( \ \alpha \neq \frac{2}{\kappa} \). Otherwise the boundary conditions are satisfied if and only if $Qe^{\frac{\alpha}{2}}=0$, which is impossible since $\alpha \ge 0$ and $Q>0$. Hence the necessary and sufficient conditions for having a well-posed Optimal Execution Problem are  
\begin{align}
\label{inequality:same_expo_exponents}
    \begin{cases}
        \alpha e^{\alpha} < 2\kappa \pi^{2}, \\
        \alpha \neq \frac{2}{\kappa}.
    \end{cases}
\end{align}

In this context we have a closed form solution, and  we can completely characterize the possible manipulation regimes. To explore those regimes, Figure \ref{fig:init_velocities_equal_expo_expo} shows a heatmap with \(\alpha\) and $\kappa$ on the axes. This emphasizes the regions of the phase space where transaction-triggered price manipulation and price manipulation occur.

In the left panel of Figure \ref{fig:init_velocities_equal_expo_expo}, the color bar corresponds to the initial trading velocities for optimal execution strategies. The region to the left of the black line identifies the admissible market regimes satisfying the inequality (\ref{inequality:same_expo_exponents}). For a fixed $\kappa$, $\alpha$ is bounded, and the maximum value of $\alpha$ increases as a power law.

The admissible region defined by Proposition \ref{proposition:CT_is_well_defined} encompasses solutions, which are small perturbations of the TWAP (light blue regions where the slope is close to $-1$) and monotonic and more concave strategies (medium blue regions). In this admissible region, the associated impact settings produce monotonic optimal execution strategies, ensuring no negative expected implementation shortfall by Proposition \ref{proposition:NoPTPimpliesNOPM}. This property is evident in the right panel of Figure \ref{fig:init_velocities_equal_expo_expo}, where the cost is positive.%; the cash generated by the strategy is less than the initial inventory value at time $0$.

Looking again at the left panel of Figure \ref{fig:init_velocities_equal_expo_expo}, we observe a red area in the admissible region corresponding to concave strategies with positive trading velocity. As anticipated by Proposition \ref{proposition:DecreasingImpacts}, the solution is bounded from below by the TWAP. The red admissible region becomes smaller when $\kappa$ increases, consistently with what was observed in the previous setting. These strategies exhibit a unique local maximum, involving buying in the initial phase of execution and selling thereafter.

Despite this presence of transaction-triggered price manipulation, the right panel of Figure \ref{fig:init_velocities_equal_expo_expo} indicates that optimal strategies associated with parameters $\alpha$ and $\kappa$ anywhere within the admissible region generate positive transaction costs, and therefore show no arbitrage.

\begin{figure}
        \centering
        \includegraphics[scale=0.45]{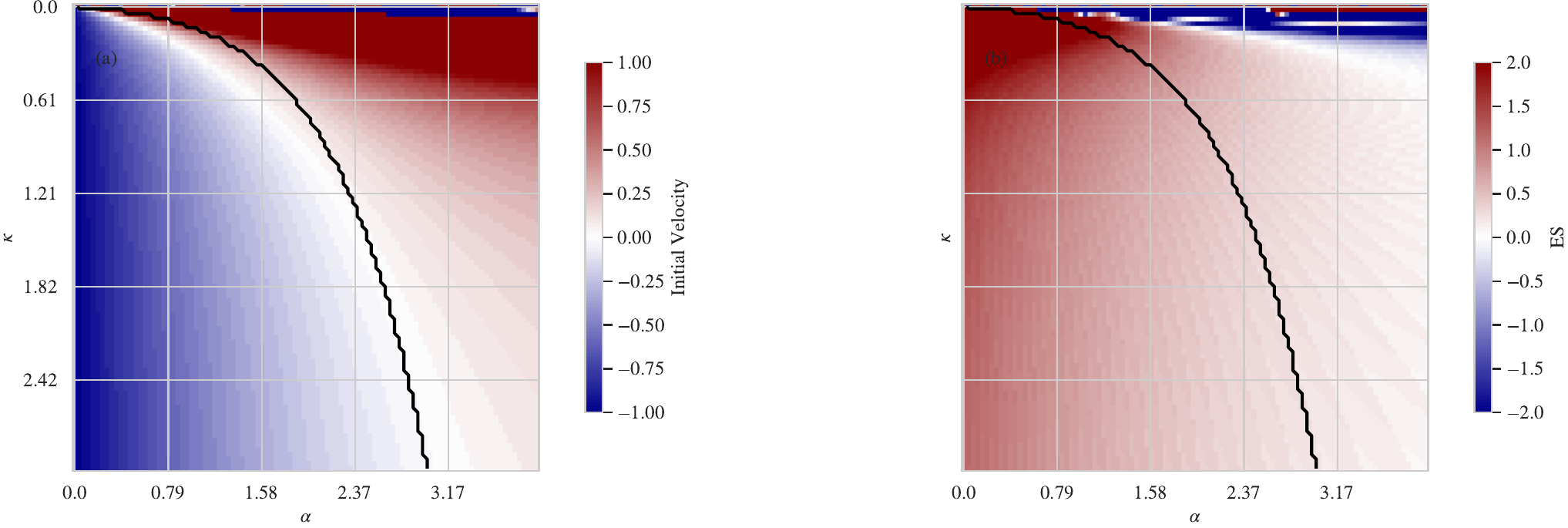}
        \caption{\textbf{Left:} Comparison of the initial velocities with respect to different market conditions, and admissible regimes. \textbf{Right:} Cash discrepancy associated to the optimal strategies. \textit{Both figures:} The region to the left of the black line is the admissible region defined by Proposition \ref{proposition:CT_is_well_defined}.}
        \label{fig:init_velocities_equal_expo_expo}
\end{figure}

\end{example}

\section{Conclusion}\label{sec:conclusions}

This paper addressed the Optimal Execution Problem defined by the time-varying Almgren-Chriss model. Assuming the impact dynamics are deterministically time-varying, the solutions of the Optimal Execution Problem are static, i.e., known at time zero by the trader. Despite this simplification, the problem remains very rich and difficult.

Leveraging both discrete and continuous time approaches, we found two related sufficient conditions for making the problem well-posed: a fairly restrictive condition ensuring that the problem is strongly well-posed and a similar, but more relaxed one for being weakly well-posed. Both conditions limit the speed of decrease of the permanent impact with respect to the level of temporary impact in the model. The insight here is that if one can apply an oversized initial positive permanent impact by buying at the beginning of a sell program, the cost of the subsequent sells may not completely offset this initial gain. In more extreme cases, the algorithm might even appear to achieve negative costs (thus a profit) purely by executing. Both of these behaviors are highly undesirable in live trading, and our paper provides sufficient conditions to preclude them. These conditions can serve as prototypes for more complex models in practical applications.

We also studied the qualitative behavior of the optimal solutions using necessary conditions given by the Euler-Lagrange equation and impact monotonicity. We have ruled out many impossible shapes of solutions. Optimal solutions of a sell program can be either convex monotonic, linear (TWAP), concave monotonic, or concave with one turning point (first buys then sells). No admissible impact setting gives negative solutions, whereby liquidating a long position would pass through a net short position. We have equally ruled out solutions oscillating between multiple phases of buys and sells. When executing a well-defined net order, such trading could be viewed as market manipulation by regulators. Therefore, sensible behavior must be guaranteed in practice, and it is helpful to have insight into why Almgren-Chriss is well-behaved in this regard.

We supplemented our results with a numerical study when impacts are exponentially decreasing. We chose this rather extreme scenario to clearly demonstrate how to explore the parameter space in order to provide insights into the model's robustness against both types of manipulation. Our findings are coherent with the theoretical results, and we are able to completely discriminate between pure and transaction-triggered price manipulation strategies. 

Future work should extend our study to cross-impact models, see \cite{mastromatteo2017trading, schneider2019cross}. In addition, our theoretical study should be tested on real data in order to understand whether, depending on the market conditions, the market can temporarily stay in the critical region, with transaction-triggered price manipulation, and over time revert back to regions associated to strongly well-posed Optimal Execution Problems. 

\section*{Acknowledgments}
FL acknowledges support from the grant PRIN2022 DD N. 104 of February 2, 2022 ”Liquidity and systemic risks in centralized and decentralized markets”, codice proposta 20227TCX5W - CUP J53D23004130006 funded by the European Union NextGenerationEU through the Piano Nazionale di Ripresa e Resilienza (PNRR).

\bibliography{references_mor}

\BeginAppendix
    
\section{Proofs}\label{app:proofs}

This appendix section presents all the proofs of propositions and lemmas utilized in the paper.

\subsection{Discrete time market impact model}

In this subsection, we provide the proofs related to the discrete impact model and the resulting Optimal Execution Problem.

\subsubsection{Proof of Proposition \ref{proposition:CostFunctionDT}}
We have
\[
\mathcal{C}(\xi)
= \mathbb{E}_0\left[ Q S_0 + \sum_{i=1}^N \xi_i \tilde S_i \right]
= Q S_0 + \mathbb{E}_0\left[\sum_{i=1}^N \xi_i \tilde S_i \right],
\]
so minimizing $\mathcal{C}$ is equivalent to minimizing
$\mathbb{E}_0\left[\sum_{i=1}^N \xi_i \tilde S_i\right]$.
\begin{align}
    \mathbb{E}_{0}\left[\sum_{i=1}^{N}\xi_{{i}}\tilde{S}_{{i}}\right] &= \mathbb{E}_{0}\left[\sum_{i=1}^{N}\xi_{{i}}(S_{i-1}+\eta_{{i}}\xi_{{i}})\right]\\
    &= \mathbb{E}_{0}\left[\sum_{i=1}^{N}\xi^{2}_{{i}}\eta_{{i}}\right]+\mathbb{E}_{0}\left[\sum_{i=1}^{N}\xi_{{i}}\left(\sum_{k=1}^{i-1}(S_{k}-S_{{k-1}})+S_{0}\right)\right]\\
    &=\mathbb{E}_{0}\left[\sum_{i=1}^{N}\xi^{2}_{{i}}\eta_{{i}}\right]+\mathbb{E}_{0}\left[\sum_{i=1}^{N}\xi_{{i}}\sum_{k=1}^{i-1}\theta_{k}\xi_{{k}}\right]+QS_{0}
\end{align}
where we used $S_k-S_{k-1}=\theta_k\xi_k+\varepsilon_k$ and the fact that the
noise terms have zero conditional expectation. Indeed,
\[
\sum_{i=1}^{N}\xi_{i}\sum_{k=1}^{i-1}\varepsilon_{k}
=\sum_{k=1}^{N-1}\varepsilon_{k}\sum_{i=k+1}^{N}\xi_{i}
=-\sum_{k=1}^{N-1}\zeta_{k}\varepsilon_{k},
\]
and $\zeta_k$ is $\mathcal{F}_{k-1}$-measurable while $\varepsilon_k$ is
independent of $\mathcal{F}_{k-1}$ with $\mathbb{E}[\varepsilon_k]=0$.
where $QS_{0}$, is $\mathcal{F}_{0}$ measurable, and is a constant. Moreover we can rewrite this quantity as follows 
\begin{eqnarray}
\mathbb{E}_{0}\left[\sum_{i=1}^{N}\xi_{{i}}\tilde{S}_{{i}}\right] = \frac{1}{2}\mathbb{E}_{0}\left[\xi^{T}A\xi\right]+QS_{0}
\end{eqnarray}
where $A$ is $\mathcal{F}_{0}$-measurable. Applying Jensen’s inequality to the convex function $x\mapsto x^{T}Ax$ the Optimal Execution Problem is equivalent to 

\begin{eqnarray}
    \xi \in \underset{\xi \in \Xi_{N,0}(Q)}{\arg \min} \left\{ \frac{1}{2}\xi^{T}A\xi\right\}
\end{eqnarray}
or equivalently
\begin{align}
    \label{problem:AC_ConstantImpacts}
    \xi \in \underset{\xi \in \Xi_{N,0}(Q)}{\arg \min} \left\{ \sum_{i=1}^{N}\eta_{{i}} \xi_{i}^{2}+\sum_{i=2}^{N}\xi_{i}\sum_{k=1}^{i-1}\theta_{{k}}\xi_{k}\right\}
    \end{align}

\subsection{Proof of Proposition \ref{proposition:DTProblemOptimalsolution}}

When $A$ is SPD the Optimal Execution Problem \ref{problem:DT} consists of minimizing a strictly convex function on a closed and convex hyperplane. Hence we obtain a strictly convex minimization problem with a unique solution. 

The Lagrangian of the problem is defined as follows
\begin{eqnarray}
\mathcal{L}(\xi,\lambda)= \frac{1}{2}\xi^{T}A\xi + \lambda(\pmb{1}^{T}\xi+Q)
\end{eqnarray}
By Lagrange theorem \cite{boyd_vandenberghe_2004} for convex functions, this solution is the unique stationary point that satisfies the Lagrange optimality conditions:
\begin{align}
    \begin{cases}
    \pmb{1}^{T}\xi^{\star}&=-Q \\
    \lambda &= \frac{Q}{\pmb{1}^{T}A^{-1}\pmb{1}}\\
    \xi^{\star} &= -\,\frac{Q}{\pmb{1}^{T}A^{-1}\pmb{1}} A^{-1}\pmb{1}
    \end{cases}
\end{align}

\subsubsection{Proof of Proposition \ref{proposition:diagdom}}

$A$ is diagonally dominant if and only if \begin{eqnarray}
    2\eta_{i} > \sum_{j=1}^{i-1}\theta_{j}+(N-i)\theta_{i}
\end{eqnarray}
If $A$ is diagonally dominant and symmetric, then it is SPD. Therefore by Proposition \ref{proposition:DTProblemOptimalsolution} the Optimal Execution Problem has a unique solution.

By the spectral theorem there exists $U\in\mathbb{O}_{N}$ such that $P=U^{T}\sqrt{D}$ and $PP^{T}=A$, where $D=diag(\{\lambda_{i}(A)\}_{i=1}^{N})$ with $\lambda_{i}(A)>0$ for all $i\in\{1,\dots,N\}$. This implies that the Optimal Execution Problem associated to $A$ does not admit price manipulation. 

\subsubsection{Proof of Proposition \ref{proposition:BmatrixNoTTPM}}

We assume that $A$ is a B-matrix. A symmetric B-matrix is symmetric positive definite \ref{lemma:BmatrixSPD}, thus invertible. By Proposition \ref{proposition:DTProblemOptimalsolution} Problem \ref{problem:DT} has a unique solution
\begin{eqnarray}
    \xi^{\star} &= -\frac{Q}{\pmb{1}^{T}A^{-1}\pmb{1}} A^{-1}\pmb{1}.
\end{eqnarray}
By lemma \ref{lemma:christensen}, if $A$ is a B-matrix then $A^{-1}\pmb{1}$ is non-negative element-wise. Hence $\xi$ has non-negative components, and the Optimal Execution Problem has a unique solution and does not admit transaction-triggered price manipulation.

\subsubsection{Proof of Proposition \ref{proposition:_ct_limit_for_dt_model}}

Recall that the impact matrix $A=(a_{ij})_{1\le i,j\le N}$ is given by
\eqref{eq:encoding_matrix}, i.e.
\[
a_{ii} = 2\eta_i,\qquad
a_{ij} =
\begin{cases}
\theta_j,& j<i,\\
\theta_i,& j>i,
\end{cases}
\qquad 1\le i\ne j\le N.
\]
Fix a row index $i\in\{1,\dots,N\}$. Its row sum is
\[
R_i = \sum_{j=1}^N a_{ij}
= 2\eta_i + \sum_{j=1}^{i-1}\theta_j + (N-i)\theta_i,
\]
so the corresponding row average is $m_i = R_i/N$.
Because the sequence $(\theta_k)_{k=1}^N$ is non-increasing and strictly
positive, we have
\[
\theta_1 \ge \theta_2 \ge \dots \ge \theta_N>0.
\]
In particular, for any fixed row $i$, all off-diagonal entries satisfy
$a_{ij}\le \theta_1$ for $j\neq i$, so the largest off-diagonal element
in row $i$ is
\[
\max_{j\ne i} a_{ij} = \theta_1.
\]
We first obtain a lower bound on $R_i$ which is uniform in $i$.
Since $(\theta_k)$ is non-increasing, for every $i$ we have
\[
\sum_{j=1}^{i-1}\theta_j
\;\ge\; (i-1)\theta_i
\;\ge\; (i-1)\theta_N,
\]
and also $\theta_i\ge\theta_N$. Thus
\begin{align*}
R_i
&= 2\eta_i + \sum_{j=1}^{i-1}\theta_j + (N-i)\theta_i \\
&\ge 2\eta_i + (i-1)\theta_i + (N-i)\theta_i \\
&= 2\eta_i + (N-1)\theta_i \\
&\ge 2\underline\eta + (N-1)\theta_N,
\end{align*}
where $\underline\eta = \min_{1\le k\le N} \eta_k$.
Dividing by $N$ gives
\[
m_i = \frac{R_i}{N}
\;\ge\;
\frac{2\underline\eta + (N-1)\theta_N}{N}
\qquad\text{for all } i=1,\dots,N.
\]
By assumption \eqref{inequality:BmatRestrictive},
\[
2\underline\eta \;\ge\; N\theta_1 - (N-1)\theta_N
\quad\Longleftrightarrow\quad
2\underline\eta + (N-1)\theta_N \;\ge\; N\theta_1.
\]
Hence
\[
m_i = \frac{R_i}{N}
\;\ge\;
\frac{2\underline\eta + (N-1)\theta_N}{N}
\;\ge\;
\theta_1.
\]
Because the off-diagonal entries in row $i$ are bounded above by
$\theta_1$, this implies
\[
m_i \;\ge\; \theta_1 = \max_{j\neq i} a_{ij}.
\]
If we strengthen \eqref{inequality:BmatRestrictive} to a strict inequality,
we obtain $m_i>\theta_1$ for all $i$, so condition (ii) in the definition
of a $B$-matrix is satisfied. Condition (i) also holds since
$R_i = N m_i > 0$. Therefore $A$ is a $B$-matrix.

\subsubsection{Proof of Proposition~\ref{prop:CT_from_Bmatrix}}

Fix $N\in\mathbb{N}$ and abbreviate $\theta_i=\theta_i^{(N)}$,
$\eta_i=\eta_i^{(N)}$, and $A=A^{(N)}$.
Since $\theta$ is strictly decreasing, the sampled sequence is strictly
decreasing:
$\theta_1>\theta_2>\cdots>\theta_N$.

For a fixed row $i$, the off-diagonal entries are
$\{a_{ij}:j\neq i\}=\{\theta_1,\dots,\theta_{i-1},\theta_i,\dots,\theta_i\}$,
hence
\[
\max_{j\neq i} a_{ij}=\theta_1.
\]
Because $A$ is a $B$-matrix, its row mean strictly dominates every off-diagonal
entry. Therefore, for each $i=1,\dots,N$,
\begin{equation}
\label{eq:Bmean_dom_theta1}
\frac{1}{N}\sum_{k=1}^{N}a_{ik}>\theta_1.
\end{equation}
A direct computation of the row sum gives
\[
\sum_{k=1}^{N}a_{ik}
=
2\eta_i+\sum_{k=1}^{i-1}\theta_k+(N-i)\theta_i.
\]
Insert this into \eqref{eq:Bmean_dom_theta1}, multiply by $T$, and use $\Delta t=T/N$:
\begin{equation}
\label{eq:discrete_Tscaled}
\frac{2T}{N}\eta_i+\Delta t\sum_{k=1}^{i-1}\theta_k+(T-t_i)\theta_i
> T\theta_1.
\end{equation}
By the scaling $\eta_i=\eta(t_{i-1})/\Delta t$, we have
$\frac{2T}{N}\eta_i=2\eta(t_{i-1})$. Also $\theta_1=\theta(0)$, and
$\theta_i=\theta(t_{i-1})$. Thus \eqref{eq:discrete_Tscaled} becomes
\begin{equation}
\label{eq:discrete_timegrid}
2\eta(t_{i-1})
+\Delta t\sum_{k=1}^{i-1}\theta(t_{k-1})
+(T-t_i)\theta(t_{i-1})
> T\theta(0).
\end{equation}

Now fix $t\in[0,T]$ and choose $i=i(N)$ such that $t_{i(N)-1}\to t$ and
$t_{i(N)}\to t$ as $N\to\infty$ (e.g.\ $i(N)=\lfloor Nt/T\rfloor+1$).
Since $\theta$ is continuous, the Riemann sums converge:
\[
\Delta t\sum_{k=1}^{i(N)-1}\theta(t_{k-1})\longrightarrow \int_{0}^{t}\theta(u)\,du.
\]
By continuity of $\eta$ and $\theta$,
\[
\eta(t_{i(N)-1})\to\eta(t),\qquad (T-t_{i(N)})\theta(t_{i(N)-1})\to (T-t)\theta(t).
\]
	Although \eqref{eq:discrete_timegrid} is strict for each $N$, the strictness
	can be lost in the limit. Taking $\liminf$ on both sides of
	\eqref{eq:discrete_timegrid} yields
\[
2\eta(t)+\int_{0}^{t}\theta(u)\,du+(T-t)\theta(t)\ge T\theta(0),
\]
which is \eqref{eq:CT_pointwise_admissibility}.

If $\theta\in C^{1}([0,T])$, then to obtain \eqref{eq:CT_weighted_slope}, note
that for $t\in[0,T]$,
\[
T\theta(0)-\int_{0}^{t}\theta(u)\,du-(T-t)\theta(t)
=\int_{0}^{t}(T-s)\,(-\dot\theta(s))\,ds,
\]
which follows by writing $\theta(0)-\theta(s)=\int_{0}^{s}(-\dot\theta(r))\,dr$
and applying Fubini's theorem. Evaluating \eqref{eq:CT_weighted_slope} at $t=T$
gives \eqref{eq:CT_terminal_condition}.

\subsubsection{Details for Remark~\ref{remark:comparison_sampling_admissibility}}

Let $\phi\in \mathcal{C}^{\infty}_{c}((0,T))$. Since $\phi(T)=0$,
$\phi(t)=-\int_t^T \dot\phi(u)\,du$, hence by Cauchy--Schwarz,
$\phi(t)^2\le (T-t)\int_t^T \dot\phi(u)^2\,du$.
Multiplying by $(-\dot\theta(t))/2$ and integrating over $t\in[0,T]$ gives
\[
\frac12\int_0^T (-\dot\theta(t))\,\phi(t)^2\,dt
\le
\frac12\int_0^T \dot\phi(u)^2
\left(\int_0^u (T-t)(-\dot\theta(t))\,dt\right)\,du
\le
\int_0^T \eta(u)\dot\phi(u)^2\,du,
\]
where the last inequality uses \eqref{eq:CT_weighted_slope}.
Finally, integration by parts yields
$\int_0^T \theta\,\phi\dot\phi
= \frac12\int_0^T \theta(\phi^2)'\,dt
= -\frac12\int_0^T \dot\theta\,\phi^2\,dt$
(since $\phi$ vanishes near $0$ and $T$), which concludes.

\subsection{Continuous time impact model}

In this subsection, we provide the proofs related to the continuous time impact model and the inherited Optimal Execution Problem.

\subsubsection{Proof of Proposition \ref{proposition:EMdiscretization}}

Fix a horizon $T>0$ and, for each $N\in\mathbb{N}$, define the uniform grid
\[
0 = t_0 < t_1 < \dots < t_N = T,
\qquad
\Delta t = t_k - t_{k-1} = T/N.
\]
\medskip\noindent
\textit{Discrete-time Almgren--Chriss model.}
For each $N$, consider a discrete strategy given by trades
$\xi^{(N)}_1,\dots,\xi^{(N)}_N$, and define the discrete inventory by
\[
\zeta^{(N)}_0 = Q, 
\qquad 
\zeta^{(N)}_k = \zeta^{(N)}_{k-1} + \xi^{(N)}_k,\quad k=1,\dots,N.
\]
The discrete-time impact model is
\begin{align}
S^{(N)}_k 
&= S^{(N)}_{k-1} + \theta^{(N)}_k\,\xi^{(N)}_k + \varepsilon^{(N)}_k,
\label{eq:disc-price}\\
\tilde S^{(N)}_k 
&= S^{(N)}_{k-1} + \eta^{(N)}_k\,\xi^{(N)}_k,
\label{eq:disc-exec}
\end{align}
where:
\begin{itemize}
  \item $\theta^{(N)}_k$ is the discrete \emph{permanent} impact coefficient,
  \item $\eta^{(N)}_k$ is the discrete \emph{temporary} impact coefficient,
  \item $\varepsilon^{(N)}_k$ is the discrete noise term.
\end{itemize}
The cash process in discrete time is updated as
\[
X^{(N)}_k 
= X^{(N)}_{k-1} - \xi^{(N)}_k\,\tilde S^{(N)}_k
= X^{(N)}_{k-1} - \xi^{(N)}_k\Big(S^{(N)}_{k-1} + \eta^{(N)}_k\xi^{(N)}_k\Big).
\]
\medskip\noindent
\textit{Step 1: Continuous-time interpolation of strategies.}
Define the continuous-time interpolation of the inventory:
\[
\zeta^{(N)}(t)
= \zeta^{(N)}_{k-1} 
   + \frac{t - t_{k-1}}{\Delta t}\,\xi^{(N)}_k,
\qquad t\in[t_{k-1},t_k),
\]
so that $\zeta^{(N)}(t_k)=\zeta^{(N)}_k$. On each interval, the trading rate is
piecewise constant:
\[
\dot\zeta^{(N)}_t
= \frac{d}{dt}\zeta^{(N)}(t)
= \frac{\xi^{(N)}_k}{\Delta t},
\qquad t\in(t_{k-1},t_k).
\]
\medskip\noindent
\textit{Step 2: Continuous-time impact functions induced by the discrete model.}
We now build piecewise-constant impact functions from the discrete
coefficients:
\[
\theta^{(N)}(t) = \theta^{(N)}_k,
\qquad
\eta^{(N)}(t) = \Delta t\,\eta^{(N)}_k,
\qquad
t\in[t_{k-1},t_k).
\]
The scaling in the second definition is crucial:
\[
\eta^{(N)}_k = \frac{\eta^{(N)}(t)}{\Delta t}
\qquad
\text{on }[t_{k-1},t_k).
\]
Intuitively: in discrete time, $\eta^{(N)}_k$ multiplies the \emph{block
size} $\xi^{(N)}_k$, while in continuous time $\eta_t$ multiplies the
\emph{trading rate} $\dot\zeta_t$.
Assume that, as $N\to\infty$, the stepwise functions converge uniformly to
continuous limits:
\[
\theta^{(N)} \to \theta, 
\qquad
\eta^{(N)} \to \eta
\quad\text{uniformly on }[0,T],
\]
where $\theta,\eta$ are continuously differentiable and strictly positive.
\medskip\noindent
\textit{Step 3: Continuous-time limit of the price process.}
From \eqref{eq:disc-price}, we can write
\[
S^{(N)}_k - S^{(N)}_{k-1}
= \theta^{(N)}(t_{k-1})\,\xi^{(N)}_k + \varepsilon^{(N)}_k.
\]
Introduce the piecewise-constant interpolation
\[
S^{(N)}(t) = S^{(N)}_k, \qquad t\in[t_k,t_{k+1}).
\]
Assume the noise is of the diffusive form
\[
\varepsilon^{(N)}_k = \sigma \sqrt{\Delta t}\,Z_k,
\]
where $(Z_k)_{k\ge 1}$ are i.i.d. standard normal random variables.
Then
\[
S^{(N)}(t) - S^{(N)}(0)
= \sum_{t_{j}<t}
   \theta^{(N)}(t_{j-1})\,\xi^{(N)}_j
 + \sigma\sum_{t_{j}<t}\sqrt{\Delta t}\,Z_j.
\]
Using $\xi^{(N)}_j = \dot\zeta^{(N)}_{t_{j-1}}\,\Delta t$, this becomes
\[
S^{(N)}(t)
= S^{(N)}(0)
 + \sum_{t_{j}<t}
   \theta^{(N)}(t_{j-1})\,\dot\zeta^{(N)}_{t_{j-1}}\,\Delta t
 + \sigma W^{(N)}_t,
\]
where $W^{(N)}_t = \sum_{t_{j}<t}\sqrt{\Delta t}\,Z_j$ is a scaled random walk.
Under the uniform convergence $\theta^{(N)}\to\theta$ and suitable boundedness
of $\dot\zeta^{(N)}$, the Riemann sums
\[
\sum_{t_{j}<t}
   \theta^{(N)}(t_{j-1})\,\dot\zeta^{(N)}_{t_{j-1}}\,\Delta t
\]
converge to $\int_0^t \theta_s \dot\zeta_s\,ds$, and by Donsker’s theorem
$W^{(N)}$ convergence to a Brownian motion $W$. Thus, in the limit $N\to\infty$,
$S^{(N)}$ converges in distribution to $S$ solving
\[
dS_t = \theta_t \dot\zeta_t\,dt + \sigma\,dW_t,
\]
which is exactly Eq.~\eqref{equation:_CT_cash_process}.
\medskip\noindent
\textit{Step 4: Continuous-time limit of the cash process and execution price.}
The discrete cash increment on $[t_{k-1},t_k)$ is
\[
X^{(N)}_k - X^{(N)}_{k-1}
= -\xi^{(N)}_k S^{(N)}_{k-1}
  - \eta^{(N)}_k \big(\xi^{(N)}_k\big)^2.
\]
Using $\xi^{(N)}_k = \dot\zeta^{(N)}_{t_{k-1}}\,\Delta t$ and
$\eta^{(N)}_k = \eta^{(N)}(t_{k-1})/\Delta t$, we obtain
\[
X^{(N)}_k - X^{(N)}_{k-1}
= -\dot\zeta^{(N)}_{t_{k-1}}\,\Delta t\, S^{(N)}_{k-1}
  - \eta^{(N)}(t_{k-1})
    \big(\dot\zeta^{(N)}_{t_{k-1}}\big)^2\,\Delta t.
\]
Summing over $k$ up to time $t$ gives
\[
X^{(N)}(t) - X^{(N)}(0)
= -\sum_{t_{j}<t} \dot\zeta^{(N)}_{t_{j-1}} S^{(N)}_{j-1}\,\Delta t
  -\sum_{t_{j}<t} \eta^{(N)}(t_{j-1})
       \big(\dot\zeta^{(N)}_{t_{j-1}}\big)^2\,\Delta t.
\]
As $N\to\infty$, under the uniform convergence of $S^{(N)}$ and $\eta^{(N)}$
and the boundedness of $\dot\zeta^{(N)}$, these Riemann sums converge to
\[
X_t - X_0
= -\int_0^t \dot\zeta_s S_s\,ds
  -\int_0^t \eta_s \dot\zeta_s^2\,ds.
\]
Equivalently, in differential form,
\[
dX_t = -\dot\zeta_t (S_t + \eta_t \dot\zeta_t)\,dt,
\]
which is Eq.~\eqref{equation:_CT_cash_process_2}.
Finally, at the discrete level, the execution price on step $k$ is
\[
\tilde S^{(N)}_k = S^{(N)}_{k-1} + \eta^{(N)}_k \xi^{(N)}_k
= S^{(N)}_{k-1} + \frac{\eta^{(N)}(t_{k-1})}{\Delta t}
                \dot\zeta^{(N)}_{t_{k-1}}\Delta t
= S^{(N)}_{k-1} + \eta^{(N)}(t_{k-1})\dot\zeta^{(N)}_{t_{k-1}},
\]
which converges to $S_t + \eta_t\dot\zeta_t$ in the limit.
\medskip
Thus, starting from the discrete-time Almgren--Chriss model
\eqref{eq:S}--\eqref{eq:S_tilde} with coefficients
$(\theta^{(N)}_k,\eta^{(N)}_k)$, and under the natural scaling
\[
\theta^{(N)}(t) = \theta^{(N)}_k,
\qquad
\eta^{(N)}(t) = \Delta t\,\eta^{(N)}_k,
\quad
t\in[t_{k-1},t_k),
\]
we obtain in the limit $N\to\infty$ the continuous-time model
\eqref{equation:_CT_cash_process}--\eqref{equation:_CT_cash_process_2}.
\begin{remark}
The scaling $\eta^{(N)}(t) = \Delta t\,\eta^{(N)}_k$ is chosen so that
\[
\eta^{(N)}_k (\xi^{(N)}_k)^2
= \eta^{(N)}(t_{k-1}) \big(\dot\zeta^{(N)}_{t_{k-1}}\big)^2 \Delta t,
\]
which is a Riemann sum for $\int_0^T \eta_t \dot\zeta_t^2 dt$. In other words,
$\eta^{(N)}_k$ is a discrete impact per \emph{block trade}, whereas
$\eta_t$ is an impact per unit of \emph{trading rate}.
\end{remark}

\subsubsection{Proof of Proposition \ref{proposition:ExistenceandUniquenessCT}}

By definition the Optimal Execution Problem in continuous time
\begin{eqnarray}
\zeta^{\star} = \arg \max_{\zeta\in\Xi(Q)}\mathcal{J}(\zeta)
\end{eqnarray}
We apply the Euler--Lagrange identity: necessarily, an optimal execution
strategy is an extremal point (Definition~\ref{def:extremal_point}) and must
solve the boundary-value problem \eqref{eq:extremal_point_BVP}.
This is not a Cauchy problem, and existence and uniqueness of such a boundary value problem is not trivial. 
\begin{lemma}
\label{lemma:existenceanduniquenesstocontrolproblems}
(see \cite{keller2018numerical}). Let $y\in {\mathbb R}^n$, $f: (t,u_1,u_2)\in [0,T]\times\mathbb{R}^{n}\times\mathbb{R}^{n}\rightarrow \mathbb{R}^{n}$ and consider the Boundary Value Problem (BVP-ODE)
    \begin{align}
        \begin{cases}
            \ddot y  &= f(t,y,\dot y)\\
            A_0y(a)&=\alpha\\
            B_0y(b)&=\beta
        \end{cases}
    \end{align}
    with $a \le t \le b$.  Assume $f$ is continuous, $\frac{\partial f_{i}(t,u)}{\partial u_{j}}$ is continuous, $\lVert\frac{\partial f_{i}(t,u)}{\partial u_{j}}\rVert_{+\infty}<k(t)$ such that $\int_{a}^{b}k(x)dx\le \log(1+\frac{\lambda}{m})$, where $m=\lVert (A_0+B_0)^{-1}B_0\rVert_{\infty}$, and $A_0+B_0\in \mathbb{GL}_{n}(\mathbb{R})$ and $\lambda \in(0,1)$. Then the BVP-ODE problem has a unique solution.
\end{lemma}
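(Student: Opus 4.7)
The plan is to reformulate the boundary value problem as a fixed-point equation on a Banach space of $C^1$ functions and then invoke Banach's fixed-point theorem, which is the classical Keller-style approach.

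First, I would handle the linear homogeneous side. The invertibility of $A+B$ guarantees that the purely linear boundary problem $\ddot y=0$ with $Ay(a)=\alpha$, $By(b)=\beta$ has a unique solution $y_0$, and more generally allows one to construct a Green's function $G(t,s)$ with explicit bounds on $\|G\|_\infty$ and $\|\partial_t G\|_\infty$ in terms of $m=\|(A+B)^{-1}B\|_\infty$. Any classical solution of the full BVP then satisfies the integral representation
\begin{equation*}
y(t) = y_0(t) + \int_a^b G(t,s)\, f\bigl(s,y(s),\dot y(s)\bigr)\, ds,
\end{equation*}
and conversely any $C^1$ solution of this integral equation is a classical solution of the BVP. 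Continuity and differentiability of $f$ together with continuity of $\partial f_i/\partial u_j$ make the right-hand side well defined as a map on $C^1([a,b],\mathbb{R}^n)$.

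Second, I would introduce the operator $T$ sending $y\mapsto y_0(\cdot)+\int_a^b G(\cdot,s)f(s,y(s),\dot y(s))\,ds$, defined on $C^1([a,b],\mathbb{R}^n)$ equipped with a norm equivalent to $\max\{\|y\|_\infty,\|\dot y\|_\infty\}$. Using the Lipschitz hypothesis $\|\partial f_i/\partial u_j\|_\infty<k(t)$, I would propagate the bound through both the $y$ and the $\dot y$ components of $T$ using a Gronwall-type argument, which is what produces an exponential rather than a linear factor. The target estimate is
\begin{equation*}
\|Ty_1-Ty_2\| \;\le\; m\left(\exp\Bigl(\int_a^b k(s)\,ds\Bigr)-1\right)\|y_1-y_2\|.
\end{equation*}

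Third, invoking the hypothesis $\int_a^b k(s)\,ds\le\log(1+\lambda/m)$ with $\lambda\in(0,1)$ makes the contraction constant above at most $\lambda<1$, so Banach's fixed-point theorem delivers a unique fixed point of $T$ in $C^1([a,b],\mathbb{R}^n)$, which by the equivalence in Step~1 is the unique solution of the BVP. The main obstacle is producing the sharp constant: one must carefully track how the Lipschitz bound on $f$ propagates through the two-variable dependence $(y,\dot y)$ and through the Green's function, and it is precisely the Gronwall step that turns the naive linear estimate $m\int k$ into the exponential factor $m(\exp(\int k)-1)$, which in turn dictates the logarithmic form of the sufficient condition in the statement.
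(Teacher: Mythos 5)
The paper does not actually prove this lemma: it is imported verbatim from Keller's monograph on two-point boundary value problems and used as a black box inside the proof of the continuous-time existence-uniqueness proposition. So your attempt has to be measured against the standard proof in the cited reference rather than against anything written in the paper itself.

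Your overall strategy (recast the BVP as a fixed-point equation and contract) is the right family of argument, but the specific estimate you need does not come out of the formulation you chose. The constant $m\bigl(\exp(\int_a^b k)-1\bigr)$ with $m=\lVert (A+B)^{-1}B\rVert_{\infty}$ is the signature of the \emph{shooting} formulation: one solves the initial value problem from $t=a$ with unknown initial data $s$, rewrites the boundary condition at $b$ as the fixed-point equation $s=(A+B)^{-1}\bigl[\gamma-B\bigl(u(b;s)-s\bigr)\bigr]$, and then Gronwall's inequality applied to the IVP gives $\lVert u(\cdot;s_1)-u(\cdot;s_2)\rVert\le e^{\int k}\lVert s_1-s_2\rVert$, hence $\lVert (u(b;s_1)-s_1)-(u(b;s_2)-s_2)\rVert\le(e^{\int k}-1)\lVert s_1-s_2\rVert$ and the contraction constant $m(e^{\int k}-1)\le\lambda<1$. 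In your Green's-function setup the operator $T$ acts on \emph{arbitrary} $C^1$ functions, not on solutions of an initial value problem, so there is no differential inequality to feed into Gronwall; the honest Lipschitz estimate for your $T$ is linear, of the form $\sup|G|\cdot\int_a^b k\cdot\lVert y_1-y_2\rVert$, and it is not clear that $\sup|G|$ can be identified with $m$. (A related soft spot: for the second-order system with separated conditions $Ay(a)=\alpha$, $By(b)=\beta$, unique solvability of the linear problem $\ddot y=0$ requires $A$ and $B$ individually invertible, not merely $A+B$; the $A+B$ condition belongs to the first-order-system form of Keller's theorem, which is what the shooting argument actually uses. In the paper's application $A=B=1$ and $n=1$, so this is harmless there.) The upshot is that the middle step of your proof --- the claimed contraction bound with that precise constant --- is asserted rather than derived, and the mechanism you invoke would not produce it; switching to the shooting formulation repairs the argument and yields exactly the stated logarithmic smallness condition.
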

%    Lemma \ref{lemma:existenceanduniquenesstocontrolproblems}'s proof can be found in"Numerical Methods for Two-Point Boundary-Value Problems" by Keller \cite{keller2018numerical}.

We apply Lemma~\ref{lemma:existenceanduniquenesstocontrolproblems} to the BVP
\eqref{eq:extremal_point_BVP} with $a=0$, $b=T$ and
$f(t,u_{1},u_{2}) = \Gamma_{t}u_{2} + \gamma_{t}u_{1}$, where
$\gamma_{t}=\frac{\dot \theta_{t}}{2\eta_{t}}$ and
$\Gamma_{t} = -\frac{\dot \eta_{t}}{\eta_{t}}$.
Note also that we have $A_0=B_0=1$ and $\alpha=Q$ and $\beta=0$; $A_0$ and $B_0$
are non-zero, and thus invertible, and $m = \frac{1}{2}$.
We look for the least restrictive $k$, i.e. the one implying the weakest constraints on the impact functions, such that 
\begin{eqnarray}
    \int_{0}^{T}k(x)dx\le \log(1+2\lambda)
\end{eqnarray}
and 
\begin{eqnarray}
     \max\{\sup_{0\le s \le t}\{|\gamma_{s}|\},\sup_{0\le s \le t}\{|\Gamma_{s}|\}\} \le k(t)
\end{eqnarray} 
This supremum is attained when $\eta$ and $\theta$ are continuously differentiable, with $\eta$ positive, since $[0,T]$ is compact. Hence a sufficient condition for existence and uniqueness is
\begin{align}\label{eq:condition}
    \begin{cases}
        \frac{1}{2}\int_{0}^{T}\left| \frac{\dot \theta_{t}}{\eta_{t}}\right| dt &\le \log(1+2\lambda) \\ 
        \int_{0}^{T}\left| \frac{\dot \eta_{t}}{\eta_{t}}\right|dt &\le \log(1+2\lambda)
    \end{cases}
\end{align}
for some $\lambda \in (0,1)$. 

\subsubsection{Proof of Proposition \ref{proposition:loclaminimumCalculusofVariations}}

\label{proof:proposition:loclaminimumCalculusofVariations_proof}
We perform a second order expansion of $\mathcal{J}$ around the extremal point
in the direction of $\phi\in \mathcal{C}^{\infty}_{c}((0,T))$, and use the
chain rule. We obtain 
\begin{equation}
\label{eq:_def_od_delta_C}
\mathcal{J}(\zeta+\epsilon\phi)-\mathcal{J}(\zeta)
= \frac{\epsilon^{2}}{2}\underbrace{\int_{0}^{T}\left((\phi)^{2}\partial^{2}_{\zeta,\zeta}L(t,\zeta,\dot \zeta)+2(\phi\dot \phi)\partial^{2}_{\zeta,\dot \zeta}L(t,\zeta,\dot \zeta)+(\dot \phi)^{2}\partial^{2}_{\dot \zeta,\dot \zeta}L(t,\zeta,\dot \zeta)\right)dt}_{=:\delta^{2}\mathcal{J}(\phi,\zeta)}+O(\epsilon^{3}).
\end{equation}
where $L(t,\zeta,\dot \zeta) = \theta_t\zeta_{t}\dot \zeta_t-\eta_t\dot \zeta_t^2$ and $\epsilon>0$. $\zeta$ is a local maximum, and thus necessarily
\begin{eqnarray}\label{eq:min}     \int_{0}^{T}\left((\phi)^{2}\partial^{2}_{\zeta,\zeta}L(t,\zeta,\dot \zeta)+2(\phi\dot \phi)\partial^{2}_{\zeta,\dot \zeta}L(t,\zeta,\dot \zeta)+(\dot \phi)^{2}\partial^{2}_{\dot \zeta,\dot \zeta}L(t,\zeta,\dot \zeta)\right)dt \le 0
\end{eqnarray}
or equivalently 
\begin{eqnarray}
    \int_{0}^{T}\phi(t)\dot \phi(t)\theta_{t}dt \le \int_{0}^{T}(\dot \phi(t))^{2}\eta_{t}dt\text{,}~~~~~~\phi \in \mathcal{C}^{\infty}_{c}((0,T)).
\end{eqnarray}

\subsubsection{Sturm--Picone lemma}

The main proposition providing sufficient conditions for maximum makes use of the following well-known Sturm-Picone theorem \cite{Leetman_Sturm_Picone}.

\begin{lemma}[Sturm--Picone]
\label{lemma:SturmPicone}
Let $p_{i},q_{i}$ with $i\in\{1,2\}$ be real-valued functions on $[a,b]$ such that 
\begin{align}
    \begin{cases}
        (p_{1}(x)y^{\prime})^{\prime}+q_{1}(x)y &= 0 \\
        (p_{2}(x)y^{\prime})^{\prime}+q_{2}(x)y &= 0 
    \end{cases}
\end{align}
and assume that
\begin{align}
    \begin{cases}
        0 < p_{2}(x) \le p_{1}(x) \\
        q_{1}(x) \le q_{2}(x)
    \end{cases}
\end{align}
Let $u$ be a non-trivial solution to the first equation, and $v$ to the second equation. If $z_{1}$ and $z_{2}$ are two successive roots of $u$, then one of the properties holds:
\begin{itemize}
    \item There exists an $x\in(z_{1},z_{2})$ such that $v(x)=0$.
    \item There exists $\lambda \in \mathbb{R}$ such that $u(x) = \lambda v(x)$.
\end{itemize}
\end{lemma}

\subsubsection{Proof of Proposition \ref{proposition:CT_is_well_defined}}
\label{proof:proposition:CT_is_well_defined}
Let $\zeta$ be an extremal point of $\mathcal{J}$ and let
$\phi\in \mathcal{C}^{\infty}_{c}((0,T))$. For
$L(t,\zeta,\dot\zeta)=\theta_t\zeta_t\dot\zeta_t-\eta_t\dot\zeta_t^{2}$ we have
\[
\partial^{2}_{\zeta,\zeta}L=0,\qquad
\partial^{2}_{\zeta,\dot\zeta}L=\theta_t,\qquad
\partial^{2}_{\dot\zeta,\dot\zeta}L=-2\eta_t.
\]
Plugging these into \eqref{eq:_def_od_delta_C} yields
\[
\delta^{2}\mathcal{J}(\phi,\zeta)
=\int_{0}^{T}\bigl(2\theta_t\,\phi(t)\dot\phi(t)-2\eta_t(\dot\phi(t))^{2}\bigr)\,dt.
\]
Since $\phi$ vanishes near $0$ and $T$, an integration by parts gives
\[
\int_{0}^{T}\theta_t\,\phi(t)\dot\phi(t)\,dt
=\frac12\int_{0}^{T}\theta_t(\phi(t)^{2})'\,dt
=-\frac12\int_{0}^{T}\dot\theta_t\,\phi(t)^{2}\,dt.
\]
Therefore,
\[
\delta^{2}\mathcal{J}(\phi,\zeta)
=-\int_{0}^{T}\Bigl(\dot\theta_t\,\phi(t)^{2}+2\eta_t(\dot\phi(t))^{2}\Bigr)\,dt.
\]
Let $\bar\eta=\min_{t\in[0,T]}\eta_t>0$ and
$\bar\theta=\max_{t\in[0,T]}\max\{-\dot\theta_t,0\}$. Then
$-\dot\theta_t\le \bar\theta$ for all $t\in[0,T]$, and
\[
\delta^{2}\mathcal{J}(\phi,\zeta)
\le
\int_{0}^{T}\Bigl(\bar\theta\,\phi(t)^{2}-2\bar\eta(\dot\phi(t))^{2}\Bigr)\,dt.
\]
By the Poincar\'e inequality on $(0,T)$ (using that $\phi(0)=\phi(T)=0$),
\[
\int_{0}^{T}\phi(t)^{2}\,dt \le \frac{T^{2}}{\pi^{2}}\int_{0}^{T}(\dot\phi(t))^{2}\,dt.
\]
Hence
\[
\delta^{2}\mathcal{J}(\phi,\zeta)
\le
\left(\frac{\bar\theta T^{2}}{\pi^{2}}-2\bar\eta\right)\int_{0}^{T}(\dot\phi(t))^{2}\,dt.
\]
If $\sqrt{\gamma}T<\pi$ with $\gamma=\frac{\bar\theta}{2\bar\eta}$, then
$\frac{\bar\theta T^{2}}{\pi^{2}}-2\bar\eta<0$, and therefore
$\delta^{2}\mathcal{J}(\phi,\zeta)<0$ for every nonzero
$\phi\in \mathcal{C}^{\infty}_{c}((0,T))$. Thus $\zeta$ is a strict local
maximum of $\mathcal{J}$. Under the assumed uniqueness of the extremal point
solving \eqref{eq:extremal_point_BVP}, this local maximum is the unique global
maximum, so the Optimal Execution Problem is a well-defined maximization
problem.

\subsubsection{Proof of Proposition \ref{proposition:boundedbelow}}

Write the extremal equation \eqref{eq:extremal_point_BVP} as
\[
\bigl(2\eta_t\dot\zeta_t\bigr)'+(-\dot\theta_t)\zeta_t=0,
\qquad t\in(0,T).
\]
Define $p_1(t)=2\eta_t$ and $q_1(t)=-\dot\theta_t$. By assumption,
$p_1(t)\ge 2\bar\eta>0$ and $q_1(t)\le\bar\theta$ on $[0,T]$.

Consider the comparison equation
\[
\bigl(2\bar\eta\,\dot v_t\bigr)'+\bar\theta\,v_t=0,
\qquad v(0)=Q,\ v(T)=0,
\]
whose solution is
\[
v(t)=Q\frac{\sin\!\bigl(\sqrt{\gamma}\,(T-t)\bigr)}{\sin(\sqrt{\gamma}\,T)},
\qquad \gamma=\frac{\bar\theta}{2\bar\eta}.
\]
If $\sqrt{\gamma}\,T<\pi$, then $\sin(\sqrt{\gamma}\,T)\neq 0$ and $v(t)>0$ for
all $t\in[0,T)$.

Since $0<p_2:=2\bar\eta\le p_1(t)$ and $q_1(t)\le q_2:=\bar\theta$ on $[0,T]$,
we can apply the Sturm--Picone lemma (Lemma~\ref{lemma:SturmPicone}) with $u=\zeta$
and $v$ as above. If $\zeta$ had a zero $t_0\in(0,T)$, then $t_0$ and $T$ would
be successive zeros of $\zeta$, and Lemma~\ref{lemma:SturmPicone} would imply
that $v$ has a zero in $(t_0,T)$ or that $\zeta$ and $v$ are proportional. Both
cases are impossible because $v$ has no zero in $(0,T)$. Therefore $\zeta$ has
no zero in $(0,T)$, and since $\zeta(0)=Q>0$ we conclude $\zeta_t>0$ for all
$t\in[0,T)$.

\subsubsection{Proof of Proposition \ref{proposition:IncreasingImpactsCT}}

When permanent impact is increasing, $\gamma=0$ by definition, and thus by Proposition \ref{proposition:boundedbelow} the solution is non-negative. Since $\gamma=0$, the optimal solution is non-negative on $[0,T]$. We prove by contradiction that the optimal solution cannot be locally strictly concave. By continuity this eliminates all the non-monotonic and non-convex solutions. We assume that the solution to \ref{eq:extremal_point_BVP} is locally strictly concave, i.e. there exist $t_{a}<t_{b}$ in $[0,T]$ such that $\ddot \zeta_{t} < 0$ for all $t\in(t_{a},t_{b})$. By the boundary condition and continuity of $\zeta$ necessarily there exists $t_{c}\in(t_{a},t_{b})$ such that $\dot \zeta_{t_{c}} < 0$. We recall that $\ddot \zeta_{t} = -\frac{\dot \eta_{t}}{\eta_{t}}\dot \zeta_{t}+\frac{\dot \theta_{t}}{2\eta_{t}}\zeta_{t}$, where $\zeta_{t}\ge0$. We assumed that $\dot \eta_{t} \ge0$ and $\dot \theta_{t}\ge0$. Hence $\ddot \zeta_{t_{c}} > 0$. This contradicts the local strict concavity. Hence when impacts are increasing, the optimal solution is convex and monotonic. Finally it is upper bounded by the TWAP inventory by the convexity property. 

\subsubsection{Proof of Proposition \ref{proposition:ConstantTemporaryDecreasingPermanentCT}}

The optimal solution satisfies 
\begin{eqnarray}
    \ddot \zeta_{t} = \frac{\dot \theta_{t}}{2\eta}\zeta_{t}
\end{eqnarray}
We prove by contradiction that the optimal solution cannot be locally strictly convex. Assume by contradiction that $\zeta_t$ is locally strictly convex. Hence there exist $t_{a}<t_{b}$ in $[0,T]$ such that $\ddot \zeta_{t} > 0$ for all $t\in(t_{a},t_{b})$. By proposition \ref{proposition:boundedbelow} the optimal solution is non-negative. We assumed that $\dot \theta_{t}\le0$. Hence $\ddot \zeta_{t} \le 0$ on $(t_{a},t_{b})$. This contradicts the local strict convexity. Finally by the concave property, the optimal solution $\zeta$ is lower bounded by the TWAP.

\subsubsection{Proof of Proposition \ref{proposition:Temporary_decrease_Permanent_increase}}

Since $\theta$ is increasing we have $\bar\theta=0$ and hence $\gamma=0$. By
Proposition~\ref{proposition:boundedbelow}, the solution of
\eqref{eq:extremal_point_BVP} is non-negative on $[0,T]$.

Assume by contradiction that the optimal inventory is not monotone, i.e., there
exists $t_{0}\in(0,T)$ such that $\dot \zeta_{t_{0}}>0$ (a buy period). Then by
continuity there exists $t_{1}\in(t_{0},T)$ such that $\zeta_{t_{1}}>\zeta_{t_{0}}$.
Since $\zeta_{T}=0$, the continuous function $\zeta$ attains its maximum on
$[t_{0},T]$ at some $t^{\star}\in(t_{0},T)$. At this point,
\[
\dot \zeta_{t^{\star}}=0,\qquad \ddot \zeta_{t^{\star}}\le 0,\qquad
\zeta_{t^{\star}}>0.
\]
Evaluating \eqref{eq:extremal_point_BVP} at $t^{\star}$ and using
$\dot\zeta_{t^{\star}}=0$ gives
\[
\ddot \zeta_{t^{\star}}
=\frac{\dot\theta_{t^{\star}}}{2\eta_{t^{\star}}}\zeta_{t^{\star}}.
\]
Under the assumption that $\theta$ is strictly increasing we have
$\dot\theta_{t^{\star}}>0$ and $\eta_{t^{\star}}>0$, hence the right-hand side is
strictly positive. This contradicts $\ddot \zeta_{t^{\star}}\le 0$. Therefore
$\dot \zeta_{t}\le 0$ for all $t\in[0,T]$, i.e., the optimal strategy involves
only sell trades. Hence the model does not admit transaction-triggered price
manipulation, and by Proposition~\ref{proposition:NoPTPimpliesNOPM} it does not
admit price manipulation either.

\subsubsection{Proof of Proposition \ref{proposition:DecreasingImpacts}}

By Proposition~\ref{proposition:boundedbelow}, the optimal solution is
non-negative on $[0,T]$.

Moreover, the optimal solution cannot satisfy $\dot \zeta_{t} < 0$ on some
interval and later become increasing. Indeed, assume by contradiction that
there exist $t_{1},t_{2}$ and $t_{3}$ in $[0,T]$ such that $\dot \zeta_{t} < 0$ on
$(t_{1},t_{2})$, $\dot \zeta_{t_{2}}=0$, and $\dot \zeta_{t} > 0$ on $(t_{2},t_{3})$,
with $\zeta_{t_{1}},\zeta_{t_{2}},\zeta_{t_{3}}>0$. Since both impacts are
decreasing, $\zeta$ satisfies
\[
\ddot \zeta_{t}
=\frac{|\dot \eta_{t}|}{\eta_{t}}\dot \zeta_{t}
-\frac{|\dot \theta_{t}|}{2\eta_{t}}\zeta_{t}.
\]
Hence on $(t_{1},t_{2})$ we have $\dot \zeta_t<0$ and $\zeta_t>0$, so
\[
\ddot \zeta_{t}
=-\frac{|\dot \eta_{t}|}{\eta_{t}}|\dot \zeta_{t}|
-\frac{|\dot \theta_{t}|}{2\eta_{t}}\zeta_{t}<0.
\]
This contradicts the fact that $\dot\zeta$ increases from negative values to
$0$ on $(t_{1},t_{2})$.

Hence if the optimal solution is non-increasing at $t_{0}\in[0,T]$, then it must be non-increasing on $[t_{0},T]$.
This implies that if the initial velocity of $\zeta$ is negative, then $\zeta$ is decreasing. 

Note that an initial positive velocity is possible by concavity arguments. In this framework, by the final boundary condition and continuity, necessarily there exists $t_{4}\in[0,T]$ such that $\zeta$ decreases on $[t_{4},T]$.

We have shown that the optimal solution can have two regimes. In the first, it
decreases from time $0$ until time $T$, and thus there is no
transaction-triggered price manipulation; in that case $\zeta$ is concave. In
the second, transaction-triggered price manipulation occurs: initially the
optimal solution buys up to time $t_{4}$ (the turning point), and then it sells
until time $T$.

Moreover, note that the way it decreases (in both regimes) is never convex, hence the optimal solution is bounded from below by the TWAP. 

Note that the assumption $\sqrt{\gamma}T<\pi$ was sufficient to prevent
catastrophic oscillations. Indeed, the optimal solution cannot be locally convex
when it is positive, but it can when it is negative in principle.

\subsubsection{Proof of Proposition \ref{proposition:Temporary_increase_Permanent_decrease}}

Since $\sqrt{\gamma}T<\pi$, Proposition~\ref{proposition:boundedbelow} yields
$\zeta_t\ge 0$ for all $t\in[0,T]$.

Rewrite \eqref{eq:extremal_point_BVP} in divergence form as
\[
\bigl(2\eta_t\dot\zeta_t\bigr)'=\dot\theta_t\,\zeta_t.
\]
Since permanent impact is decreasing, $\dot\theta_t\le 0$ on $[0,T]$, and hence
$\dot\theta_t\,\zeta_t\le 0$ because $\zeta_t\ge 0$. Therefore the map
$t\mapsto 2\eta_t\dot\zeta_t$ is non-increasing on $[0,T]$. As $\eta_t>0$ for
all $t$, the trading rate $\dot\zeta_t$ can change sign at most once, and if it
does it can only change from positive to negative. Since $\zeta(0)=Q>0$ and
$\zeta(T)=0$, the optimal solution is either monotone decreasing, or it buys
initially and then switches once to selling (one turning point).

\section{Power law decreasing impacts}\label{app:powerlaw}

We assume a decreasing monotonic form for the permanent impacts. 
\begin{align}
    \theta_{t} = \beta_{p}\left(1+\frac{t}{T}\right)^{-\alpha_{p}} && \eta_{t} = \beta_{tp}\left(1+\frac{t}{T}\right)^{-\alpha_{tp}}
\end{align}
This decay might for example be observed at the beginning of the trading day when liquidity decreases \cite{cartea2016incorporating}.
With the above choice it is $-\dot \theta_{t} \le \frac{\alpha_{p}\beta_{p}}{T}=\bar \theta$ and %$\eta_{t} \ge \beta_{tp}(1+\frac{t}{T})^{-\alpha_{tp}}\ge \beta_{tp}2^{-\alpha_{tp}}$ {\bf I would simply say 
$\bar \eta=\beta_{tp}2^{-\alpha_{tp}}$
%} for all $t\in[0,T]$. 
and hence, by proposition \ref{proposition:CT_is_well_defined}, the Optimal Execution Problem is a maximization problem when 
\begin{eqnarray}
    \sqrt{\frac{\alpha_{p}\beta_{p}}{2T\beta_{tp}2^{-\alpha_{tp}}}}T < \pi
\end{eqnarray}
since $\gamma = \frac{\bar{\theta}}{2\bar{\eta}}$ where $\bar{\theta} = \frac{\alpha_{p}\beta_{p}}{T}$ and $\bar{\eta}=\beta_{tp}2^{-\alpha_{tp}}$.
By dimensionality argument $\beta_{tp}=T\kappa\beta_{p}$, with $\kappa$ a dimensionless quantity%Denoting by $\kappa$ the proportionality coefficient that links $\beta_{p}$ and $T\beta_{tp}$, ie $\beta_{tp}=T\kappa\beta_{p}$, 
and the maximization inequality constraint can be written as 
\begin{eqnarray}
\label{inequality:_PL_diff_impacts_maximization}
    \alpha_{p} < \pi^{2}\kappa2^{1-\alpha_{tp}}
\end{eqnarray}
We can give a first interpretation of this results. Indeed, the decreasing velocity of the permanent impact is bounded by a constant that is proportional to the ratio $\frac{\kappa}{T}$ and is a decreasing function of the velocity of the temporary impact. As temporary impact gets small, the permanent impact is lower bounded.

We now consider separately the two cases when the two exponent are different or equal and, without loss of generality, we set $T=1$.
\begin{itemize}
    \item\underline{$\alpha_{tp} \neq \alpha_{p}$}:
    The first order condition is
%\begin{eqnarray}
 %   \ddot \zeta_{t} = \frac{\alpha_{tp}}{T}\frac{1}{1+t/T}\dot \zeta_{t} - \frac{\alpha_{p}}{2\kappa}\frac{1}{(1+t/T)^{\alpha_{p}-\alpha_{tp}+1}}\zeta_{t}
%\end{eqnarray}
%with $\zeta_{0} = Q$ and $\zeta_{T} = 0$. Without loss of generality we can assume $T=1$, and we get
\begin{eqnarray}
    \ddot \zeta_{t} = \frac{\alpha_{tp}}{1+t}\dot \zeta_{t} - \frac{\alpha_{p}}{2\kappa}\frac{1}{(1+t)^{\alpha_{p}-\alpha_{tp}+1}}\zeta_{t}
\end{eqnarray}
with $\zeta_{0} = Q$ and $\zeta_{T} = 0$.
A sufficient condition for this BVP to have a solution is that there exists $\lambda \in (0,1)$ such that 
\begin{align}
\label{conditions:PL_different_exponents}
        \begin{cases}
            \frac{\alpha_{p}}{2\kappa(\alpha_{tp}-\alpha_{p})}\left[1-2^{\alpha_{tp}-\alpha_{p}}\right] &\le \log(1+2\lambda) \\ 
            \alpha_{tp}\log(2) &\le \log(1+2\lambda)
        \end{cases}
\end{align}

\begin{figure}
        \centering
        \includegraphics[scale=0.37]{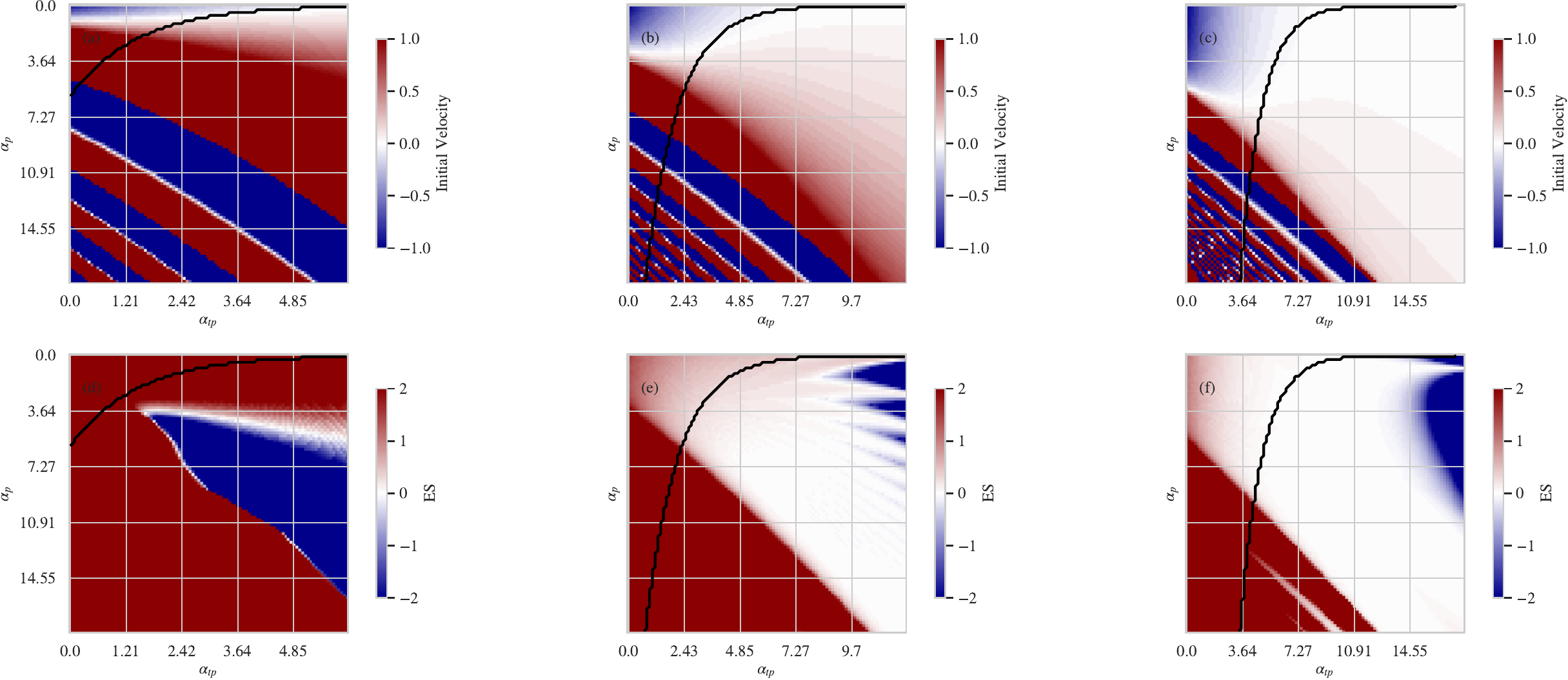}
        \caption{ \textbf{First row:} Comparison of the initial velocities with respect to different market conditions, and admissible regimes. \textbf{Second row:} Cash discrepancy associated to the optimal strategies. \textbf{Caption:} Each column is associated to a parameter $\kappa$ (first column with $\kappa=0.1$, second column with $\kappa=1$, third column with $\kappa=10$). The region on the left  of the black line is the admissible region.}\label{fig:init_velocities_diff_PL}
\end{figure}

\bigbreak

To reveal the impact settings that give a maximization problem, we propose to explore the parameter space by examining the initial trading velocities associated with optimal execution strategies across various impact settings. Doing so, we are interested in the cost values as a function of the parameters characterizing the impacts decay. 

The first row of Figure \ref{fig:init_velocities_diff_PL} shows the initial trading velocities as a function of $\alpha_{tp}$ and $\alpha_{p}$, while the second row shows the expected shortfall. The first column considers $\kappa=0.1$. In the admissible region, defined by Eq. \ref{inequality:_PL_diff_impacts_maximization} and corresponding to the region on the left of the black line, we observe initial trading velocities greater than $-1$, corresponding to concave form, in agreement with the proposition \ref{proposition:DecreasingImpacts}. The admissible region contains monotonic concave solutions, represented by the  red region, and non monotonic concave solutions, associated with transaction-triggered manipulation strategies, represented by the blue region. We observe that the transition between the two regions is smooth and seems to be roughly independent of $\alpha_{tp}$. 
The second's row first panel shows the expected shortfall generated by the optimal trading strategies. In the admissible region, this quantity is positive, indicating positive transaction costs. Hence with small values of $\kappa$, our model does not generate price manipulations, but admit non-monotonic solutions. 

The second and third columns show the case $\kappa=1$ and $\kappa=10$, respectively. In the admissible region the  area corresponding to monotonic solutions (blue) becomes larger and larger. 
Interestingly, as observed for $\kappa=0.1$, when $\kappa=1$ and $\kappa=10$, the admissible strategies does not generate negative transaction costs (second row panels). This is somewhat expected, given that when $kappa$ is large there is less space for transaction-triggered price manipulation and thus 
%with relatively large values of $\kappa$, the admissible region only for more and more monotonic solutions, indicating no transaction-triggered price manipulation and, consequently, 
positive transaction costs by Proposition \ref{proposition:NoPTPimpliesNOPM} (see \cite{alfonsi2010optimal}).

In conclusion, the time-varying Almgren-Chriss impact model in continuous time, characterized by power law decreasing impacts with different speed of decrease, can not result in negative transaction costs even if transaction-triggered price manipulation is possible. Moreover, as the value of $\kappa$ increases, indicating $\frac{\eta_{0}}{T}>>\theta_{0}$, the model diminish potential form of transaction-triggered price manipulation, ensuring the absence of negative transaction costs.

\item \underline{$\alpha_{tp} = \alpha_{p}=:\alpha$}:
The first order conditions gives
%\begin{eqnarray}
%    \ddot \zeta_{t} = \frac{\alpha}{T}\frac{1}{1+t/T}\dot \zeta_{t} - \frac{\alpha\beta_{p}}{2T\beta_{tp}}\frac{1}{(1+t/T)}\zeta_{t}
%\end{eqnarray}
%with $\zeta_{0} = Q$ and $\zeta_{T} = 0$. Without loss of generality we can assume $T=1$, and we get
\begin{eqnarray}
    \ddot \zeta_{t} = \frac{\alpha}{1+t}\dot \zeta_{t} - \frac{\alpha}{2\kappa}\frac{1}{(1+t)}\zeta_{t}
\end{eqnarray}
with $\zeta_{0} = Q$ and $\zeta_{T} = 0$.
If there exists $\lambda \in (0,1)$ such that Ineq. \ref{inequalities:_Power_law_examples} are verified, then this BVP has a solution. 
\begin{align}
\label{inequalities:_Power_law_examples}
        \begin{cases}
            \frac{\alpha}{2\log(1+2\lambda)}\log(2) &\le  \kappa \\ 
            \alpha\log(2) &\le \log(1+2\lambda)
        \end{cases}
\end{align}

We can rewrite the maximization inequality \ref{inequality:_PL_diff_impacts_maximization} as
\begin{eqnarray}
\label{inequality:_PL_same_impacts_maximization}
    \alpha < W\left(\frac{2\pi^{2}\kappa}{\log(2)}\right)
\end{eqnarray}
where $W(\cdot)$ is the product logarithm function, called the Lambert function. 

%In this context we have a solution with parameters $\kappa$ and $\alpha$ that can be easily computed numerically with the shooting method \cite{osborne1969shooting}. 

Having only two parameters $\kappa$ and $\alpha$ allows us to completely characterize the impact settings that give any type of manipulation. To explore these impact regimes, we show a heatmap with \(\alpha\) on the x-axis and \(\ \kappa\) on the y-axis, emphasizing the regions of phase space where transaction-triggered price manipulation and manipulation occur. Figure \ref{fig:init_velocities_equal_PL} $(a)$ illustrates the initial trading velocities for optimal execution strategies as a function of  $\kappa$ and $\alpha$. The  region to the left of the black line identifies the admissible market regimes satisfying the inequality (\ref{inequality:same_expo_exponents}). The admissible region defined by Proposition \ref{proposition:CT_is_well_defined} generates optimal solutions that are small perturbations of the TWAP (light blue regions where the slope is close to $-1$) and monotonic and more concave strategies (light blue regions) at the frontiers between the blue and the red regions. In this admissible region, the associated impact settings produce monotonic optimal execution strategies, and by Proposition \ref{proposition:NoPTPimpliesNOPM} there are no negative transaction costs. This property is even more evident in Figure \ref{fig:init_velocities_equal_PL}(b), where the expected shortfall is positive.

Furthermore, observing again panel $(a)$ the admissible region accepts also red strategies, corresponding to concave strategies with positive initial trading velocity. As anticipated by Proposition \ref{proposition:DecreasingImpacts}, the solution is lower bounded by the TWAP, but optimal solution can be monotonic or can have a turning point; this is what we see here. These strategies exhibit a unique local maximum, involving buying in the initial phase of execution and selling thereafter. 

While Proposition \ref{proposition:NoPTPimpliesNOPM} ensures the absence of negative transaction costs in the absence of transaction-triggered price manipulation, the question remains whether such strategies (the one with TTPM) generate negative transaction costs. Plot $(b)$ in Figure \ref{fig:init_velocities_equal_PL} reveals that despite allowing buys in a selling process, these strategies always have positive transaction costs since they generate negative cost.
\end{itemize}
\begin{figure}
        \centering
        \includegraphics[scale=0.45]{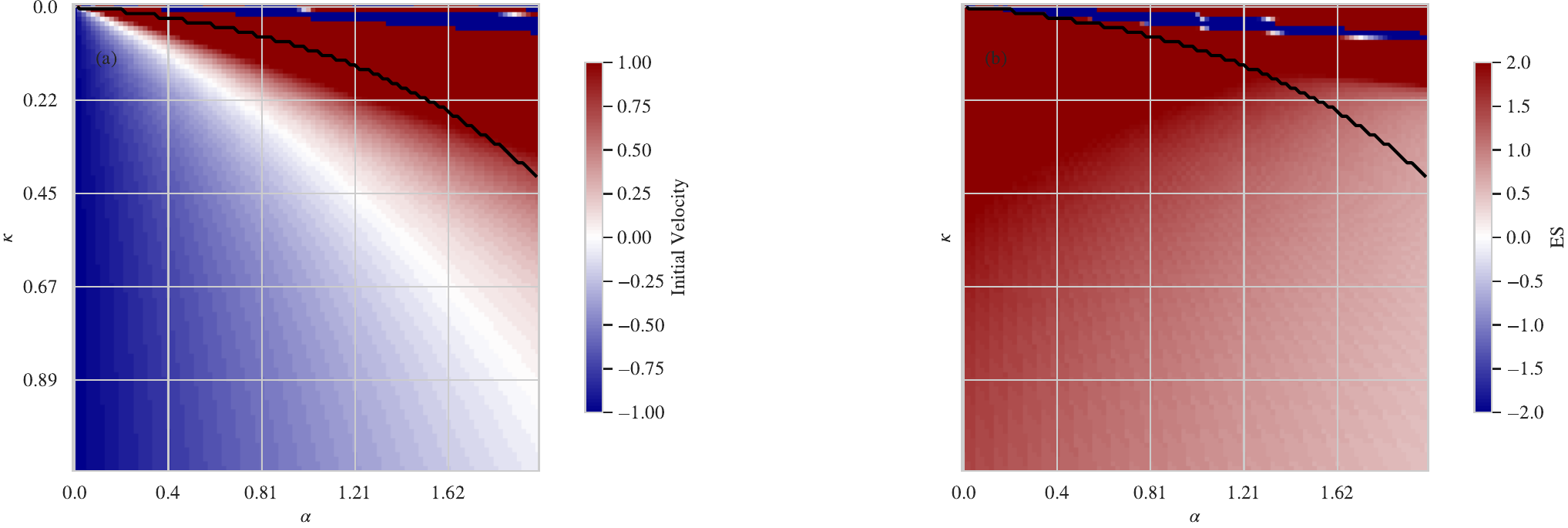}
        \caption{\textit{(a)} Comparison of the initial velocities with respect to different market conditions, and admissible regimes. \textit{(b)} Cash discrepancy associated to the optimal strategies. \textit{Both figures:} The region on the left  of the black line is the admissible region}\label{fig:init_velocities_equal_PL}
\end{figure}

\section{B-matrices}
\label{app:b_matrices}
In this section, we introduce a class of matrices that are a subset of the class of P-matrices: the B-matrices. B-matrices are very useful in matrix analysis and its application. Notably properties of B-matrices are used to localize the real eigenvalues of a real matrix and the real parts of all eigenvalues of a real matrix. Moreover symmetric B-matrices are extremely useful because they are positive definite. 

\begin{definition}
\label{definition:Bmatrix}
    Let $A=(a_{i,k})_{1\le i,k\le n}$ be a real square matrix. A is a B-matrix when it has positive row sums and all of its off-diagonal elements bounded above by the corresponding row means, ie for all $i=1,\dots,n$
    \begin{eqnarray}
        \sum_{k=1}^{n}a_{i,k}>0 \text{, and } \frac{1}{n}\left(\sum_{k=1}^{n}a_{i,k}\right)>a_{i,j} \text{, }\forall j \neq i.
    \end{eqnarray}
\end{definition}
B-matrices have many favorable properties, the most relevant ones are detailed in \ref{app:b_matrices}.

\begin{proposition}
\label{proposition:Bmatrixnecessarycond}
    Let $A$ be a B-matrix. Then necessarily 
    \begin{eqnarray}
        a_{i,i} > \max\{0,a_{i,j}|j\neq i\} \text{, } \forall i \in \{1,\dots,n\}.
    \end{eqnarray}
    Moreover, 
    \begin{eqnarray}
        a_{i,l} \le \frac{1}{n}\sum_{k=1}^{n}a_{i,k} \le a_{i,i} \text{, } \forall l\neq i\text{, }\forall i \in \{1,\dots,n\}.
    \end{eqnarray}
\end{proposition}
\begin{proof}{}
    First we show that $a_{i,i} > \max\{0,a_{i,j}|j\neq i\} \text{, } \forall i \in \{1,\dots,n\}$. We fix $i\in\{1,\dots,n\}$ and remark that $\frac{1}{n}\left(\sum_{k=1}^{n}a_{i,k}\right)>a_{i,j}$ implies that $a_{i,i}>a_{i,j}$, $\forall j \in \{1,\dots,n\}$. We prove by contradiction that $a_{i,i}>0$. Assume that $a_{i,i}\le0$. By assumption and using what we proved before we also have $a_{i,j}<a_{i,i}\le 0$, $\forall j \in \{1,\dots,n\}$. Thus we have $\sum_{k=1}^{n}a_{i,k}\le0$. By the B-matrix definition we have $\sum_{k=1}^{n}a_{i,k}>0$. This is a contradiction, hence $a_{i,i}>0$ necessarily. 

    Now we prove that $a_{i,l} \le \frac{1}{n}\sum_{k=1}^{n}a_{i,k} \le a_{i,i} \text{, } \forall l\neq i\text{, }\forall i \in \{1,\dots,n\}$. The left inequality is given by the second inequality of the $B$-matrix definition. The right inequality follows from the fact that $a_{i,i} > \max\{0,a_{i,j}|j\neq i\}$. Indeed, it follows that $\frac{1}{n}\sum_{k=1}^{n}a_{i,k}\le\frac{1}{n}\sum_{k=1}^{n}a_{i,k}\pmb{1}_{\{a_{i,k}\ge0\}}<a_{i,i}.$
    
\end{proof}

\begin{remark}
    For the moment we gave a definition of B-matrices, and necessary conditions for a matrix to be a B-matrix. In the next proposition we give a consistent criteria for being a B-matrix. 
\end{remark}

\begin{proposition}
\label{proposition:Bmatrixequivalence}
    Let $A=(a_{i,k})_{1\le i,k\le n}$ be a real square matrix. Then $A$ is a B-matrix if and only if 
    \begin{eqnarray}
        \frac{1}{n}\sum_{k=1}^{n}a_{i,k} > \max\{0,a_{i,j}|j\neq i\} \text{, } \forall i \in \{1,\dots,n\}.
    \end{eqnarray}
\end{proposition}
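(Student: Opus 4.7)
The plan is to prove this equivalence by unwinding the definitions in each direction; no deep machinery is needed. The statement simply bundles the two defining conditions of a B-matrix (positive row sums, and off-diagonal entries dominated by row means) into a single inequality by combining them with the constant $0$ inside a $\max$.

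For the forward direction, I assume $A$ is a B-matrix and fix $i$. From the first defining condition $\sum_{k=1}^n a_{i,k} > 0$, dividing by $n > 0$ gives $\frac{1}{n}\sum_{k=1}^n a_{i,k} > 0$. From the second defining condition, $\frac{1}{n}\sum_{k=1}^n a_{i,k} > a_{i,j}$ for every $j \neq i$. Taking the maximum of these finitely many strict lower bounds on $\frac{1}{n}\sum_{k=1}^n a_{i,k}$ yields
\[
\frac{1}{n}\sum_{k=1}^n a_{i,k} > \max\{0,\; a_{i,j} \mid j \neq i\},
\]
which is the desired inequality.

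For the backward direction, I assume the combined inequality and read off both B-matrix conditions separately. Since $\max\{0,a_{i,j}\mid j\neq i\} \ge 0$, the inequality $\frac{1}{n}\sum_{k=1}^n a_{i,k} > 0$ follows immediately, hence $\sum_{k=1}^n a_{i,k} > 0$. Likewise, since $\max\{0,a_{i,j}\mid j\neq i\} \ge a_{i,j}$ for every $j \neq i$, we get $\frac{1}{n}\sum_{k=1}^n a_{i,k} > a_{i,j}$ for all $j \neq i$. Both defining conditions of Definition \ref{definition:Bmatrix} are thus recovered, so $A$ is a B-matrix.

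I do not anticipate any obstacle: the two directions are essentially tautological, and the only subtlety worth flagging is that $n > 0$ is used implicitly when multiplying/dividing by $n$, and that the maximum over a finite set can be distributed into the inequality on each side. No step requires more than elementary manipulation of strict inequalities.
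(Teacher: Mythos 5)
Your proof is correct. The paper itself does not argue this proposition at all --- it simply defers to Proposition~2.2 of Pe\~na \cite{pena2001class} --- so your self-contained verification is a welcome addition rather than a departure. Given the paper's Definition~\ref{definition:Bmatrix}, the claimed equivalence really is just the observation that the single inequality $\frac{1}{n}\sum_{k=1}^{n}a_{i,k} > \max\{0,\,a_{i,j}\mid j\neq i\}$ is the conjunction of the two defining conditions, and both of your directions (taking the maximum of finitely many strict lower bounds one way, and reading off each bound from the maximum the other way) are handled without gaps.
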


\begin{proof}{}
    See Proposition 2.2 in Peña \cite{pena2001class}.
\end{proof}

\begin{lemma}
\label{lemma:BmatrixSPD}
    A symmetric B-matrix is positive definite. 
\end{lemma}

\begin{proof}{}
    See Corollary 2.7 in Peña \cite{pena2001class}. 
\end{proof}

\begin{lemma}
\label{lemma:christensen}
Let $A \in \mathcal{M}_{n}(\mathbb{R})$. We assume that $A$ is non-singular, and $A^{T}$ is  $B$-matrix. Then $A^{-1}\pmb{1} \ge 0$ in the element-wise sense. 
\end{lemma}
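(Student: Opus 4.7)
The plan is to decompose $A^{T}$ as a rank-one perturbation of a non-singular $M$-matrix (a Pe\~na-type splitting), and to transport the resulting non-negativity through Sherman-Morrison. Since the B-matrix hypothesis is on $A^{T}$ rather than $A$, it is natural to work with $\tilde A:=A^{T}$ first and take a transpose at the very end.

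For each row $i$ of $\tilde A$ let $r_{i}:=\max\{0,\tilde a_{i,j}:j\neq i\}\geq 0$ and set $r:=(r_{1},\ldots,r_{n})^{T}$. Then $\tilde A=B+r\pmb{1}^{T}$ with $B_{i,j}:=\tilde a_{i,j}-r_{i}$. I would verify that $B$ is a non-singular $M$-matrix in three steps: (i) $B$ has positive diagonal, because Proposition \ref{propositon:Bmatrixnecessarycond} applied to $\tilde A$ gives $\tilde a_{i,i}>\max\{0,\tilde a_{i,j}:j\neq i\}=r_{i}$; (ii) $B$ has non-positive off-diagonal entries by construction of $r_{i}$, so $B$ is a $Z$-matrix; (iii) its row sums equal $\sum_{j}\tilde a_{i,j}-n r_{i}>0$, since the B-matrix condition forces $\frac{1}{n}\sum_{j}\tilde a_{i,j}>r_{i}$. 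A $Z$-matrix with strictly positive row sums is a non-singular $M$-matrix, so $B^{-1}\geq 0$ element-wise, and consequently $B^{-T}\geq 0$.

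Transposing the decomposition yields $A=B^{T}+\pmb{1}\,r^{T}$, a rank-one update of the invertible matrix $B^{T}$. A single application of the Sherman-Morrison formula then gives
\begin{equation*}
A^{-1}\pmb{1}=\frac{B^{-T}\pmb{1}}{1+r^{T}B^{-T}\pmb{1}}.
\end{equation*}
Both $B^{-T}\pmb{1}\geq 0$ (numerator) and $r^{T}B^{-T}\pmb{1}\geq 0$ follow from $B^{-T}\geq 0$ and $r\geq 0$, so the denominator is at least $1$; this simultaneously validates the Sherman-Morrison step and delivers $A^{-1}\pmb{1}\geq 0$ element-wise.

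The only subtlety, rather than a genuine obstacle, is that the B-matrix hypothesis on $A^{T}$ is a row-wise property, whereas $A^{-1}\pmb{1}$ couples the columns of $A$. Re-encoding the hypothesis as ``$A^{T}$ is a non-singular $M$-matrix plus a non-negative rank-one term $r\pmb{1}^{T}$'' bridges this mismatch, after which classical $M$-matrix inverse-positivity and Sherman-Morrison complete the argument.
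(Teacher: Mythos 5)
Your proof is correct. Note, however, that the paper does not actually prove this lemma: it simply cites Lemma 4 of Christensen \cite{christensen2019comparative}, so you have supplied a self-contained argument where the authors delegate to an external reference. Your route --- split $\tilde A = A^{T}$ as $B + r\pmb{1}^{T}$ with $r_{i}=\max\{0,\tilde a_{i,j}:j\neq i\}$, observe that the B-matrix inequalities make $B$ a $Z$-matrix with strictly positive row sums (hence strictly diagonally dominant with positive diagonal, hence a non-singular $M$-matrix with $B^{-1}\ge 0$), and then push the rank-one correction through Sherman--Morrison --- is exactly the splitting Pe\~na introduces for B-matrices, and it is the standard way results of this type are established in that literature. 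Every step checks out: the off-diagonal sign of $B$ and the positivity of its row sums follow directly from the definition of $r_{i}$ and Proposition \ref{proposition:Bmatrixequivalence}; the Sherman--Morrison denominator $1+r^{T}B^{-T}\pmb{1}\ge 1$ is nonzero, which both licenses the formula and gives the sign conclusion. A small bonus of your argument is that it derives the non-singularity of $A$ rather than assuming it, so the hypothesis that $A$ is non-singular is redundant in your version. What the citation buys the paper is brevity; what your proof buys is transparency about precisely which consequences of the B-matrix definition (row-sum positivity and domination of off-diagonal entries by row means) drive the sign of $A^{-1}\pmb{1}$.
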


\begin{proof}{}
    See Lemma $4$ in Christensen \cite{christensen2019comparative}. 
\end{proof}

\EndAppendix

\end{document}